\documentclass[a4paper,11pt]{amsart}

\newfont{\cyr}{wncyr10 scaled 1100}

\setcounter{tocdepth}{2}

\usepackage[left=2.7cm,right=2.7cm,top=3.5cm,bottom=3cm]{geometry}
\usepackage{amsthm,amssymb,amsmath,amsfonts,mathrsfs,amscd,yfonts}
\usepackage{verbatim}
\usepackage{turnstile}
\usepackage[latin1]{inputenc}
\usepackage[all]{xy}
\usepackage{latexsym}
\usepackage{stmaryrd}
\usepackage{dsfont}
\usepackage{longtable}
\usepackage{multirow}
\usepackage[usenames,dvipsnames]{color}
\usepackage{listings}
\usepackage{mathtools}
\usepackage{tikz}
\usepackage{tikz-cd}
\usetikzlibrary{matrix,arrows,decorations.pathmorphing}
\usepackage{bm}
\usepackage[shortlabels]{enumitem}

\theoremstyle{plain}
\newtheorem*{thmA}{Theorem~A}
\newtheorem*{thmB}{Theorem~B}
\newtheorem*{thmC}{Theorem~C}
\newtheorem*{thmD}{Theorem~D}

\newtheorem{theorem}{Theorem}[section]
\newtheorem{corollary}[theorem]{Corollary}
\newtheorem{lemma}[theorem]{Lemma}
\newtheorem{proposition}[theorem]{Proposition}
\newtheorem{propo}[theorem]{Proposition}

\theoremstyle{definition}
\newtheorem{definition}[theorem]{Definition}

\newtheorem{examplewr}[theorem]{Example}

\theoremstyle{remark}
\newtheorem{obswr}[theorem]{Observation}
\newtheorem{remarkwr}[theorem]{Remark}
\newtheorem{hypotheses}[theorem]{Hypotheses}

\newenvironment{remark}{\begin{remarkwr}\begin{upshape}}{\end{upshape}\end{remarkwr}}

\newcommand{\any}{?}

\newcommand{\bb}{\mathbb}
\newcommand{\frk}{\mathfrak}
\newcommand{\cl}{\mathcal}

\newcommand{\univ}{\kappa}
\newcommand{\CM}{{\boldsymbol{\theta}_{\psi_0}}}

\newcommand{\kapinfty}{{\kappa_{\psi,\mathrm{ad}(g),\infty}}}
\newcommand{\kapinftyone}{{\kappa_{\psi,\mathrm{ad}(g),1,\infty}}}
\newcommand{\kapinftyn}{{\kappa_{\psi,\mathrm{ad}(g),m,\infty}}}

\newcommand{\kaps}{{\kappa_{\psi,\mathrm{ad}^0(g)}}}

\newcommand{\kapinftys}{{\kappa_{\psi,\mathrm{ad}^0(g),\infty}}}
\newcommand{\kapinftyones}{{\kappa_{\psi,\mathrm{ad}^0(g),1,\infty}}}
\newcommand{\kapinftyns}{{\kappa_{\psi,\mathrm{ad}^0(g),m,\infty}}}
\newcommand{\kapinftynqs}{{\kappa_{\psi,\mathrm{ad}^0(g),mq,\infty}}}

\newcommand{\Vrep}{V^\psi_{\mathrm{ad}(g)}}
\newcommand{\Trep}{T^\psi_{\mathrm{ad}(g)}}
\newcommand{\Arep}{A^\psi_{\mathrm{ad}(g)}}

\newcommand{\Tsrep}{T^\psi_{\mathrm{ad}^0(g)}}

\DeclareMathOperator{\alt}{Alt}

\DeclareMathOperator{\et}{et}

\DeclareMathOperator{\cyc}{cyc}

\DeclareMathOperator{\nr}{nr}

\DeclareMathOperator{\Ind}{Ind}

\DeclareMathOperator{\lcm}{lcm}
\DeclareMathOperator{\bal}{bal}

\DeclareMathOperator{\Ad}{Ad}

\DeclareMathOperator{\ad}{ad}

\DeclareMathOperator{\Gr}{bal}

\DeclareMathOperator{\Iw}{Iw}

\DeclareMathOperator{\Char}{Char}
\DeclareMathOperator{\tors}{tors}
\DeclareMathOperator{\Frac}{Frac}
\DeclareMathOperator{\ac}{ac}

\newcommand{\Lp}{{\mathscr{L}_p}}

\newcommand{\Q}{\mathbb{Q}}
\newcommand{\Z}{\mathbb{Z}}

\newcommand{\Sel}{\mathrm{Sel}}

\newcommand{\Gal}{\mathrm{Gal\,}}
\newcommand{\GL}{\mathrm{GL}}

\newcommand{\Frob}{\mathrm{Fr}}

\newcommand{\Fr}{\mathrm{Fr}}

\newcommand{\ord}{{\mathrm{ord}}}
\newfont{\gotip}{eufb10 at 12pt}

\newcommand{\cO}{{\mathcal O}}

\newcommand{\hf}{{\mathbf{f}}}
\newcommand{\hg}{{\mathbf{g}}}
\newcommand{\hh}{{\mathbf h}}

\DeclareMathOperator{\Hom}{Hom}

\newcommand{\res}{\mathrm{res}}

\newcommand{\fp}{{\mathfrak p}}

\newcommand{\dBr}[1]{\llbracket{#1}\rrbracket}

\numberwithin{equation}{section}

\include{thebibliography}

\begin{document}

\title{An anticyclotomic Euler system for adjoint modular Galois representations}

\author{Ra\'ul Alonso, Francesc Castella, and \'Oscar Rivero}

\begin{abstract}
Let $K$ be an imaginary quadratic field and $p$ a prime split in $K$. In this paper we construct an anticyclotomic Euler system for the adjoint representation attached to elliptic modular forms base changed to $K$. We also relate our Euler system to a $p$-adic $L$-function deduced from the construction by Eischen--Wan and  Eischen--Harris--Li--Skinner of $p$-adic $L$-functions for unitary groups.
%We also introduce a $p$-adic $L$-function in this context, using a suitable transfer to a unitary group, and show that the base class of our system .
This allows us to derive new cases of the Bloch--Kato conjecture in rank zero, and a divisibility towards an Iwasawa main conjecture.
\end{abstract}

%\date{\today\;{\color{blue} Draft still in progress}}
\date{\today}

\address{R. A.: Department of Mathematics, Princeton University, Fine Hall, Princeton, NJ 08544-1000, USA}
\email{raular@math.princeton.edu}

\address{F. C.: Department of Mathematics, University of California, Santa Barbara, CA 93106, USA}
\email{castella@ucsb.edu}

\address{O. R.: Simons Laufer Mathematical Sciences Institute, 17 Gauss Way, Berkeley, CA 94720, United States of America}
\email{riverosalgado@gmail.com}

\subjclass[2010]{11R23; 11F85, 14G35}

\maketitle

\setcounter{tocdepth}{1}
\tableofcontents

\section{Introduction}

%In this paper we construct an anticyclomic Euler system for ce

%Let $K$ be an imaginary quadratic field

The goal of this paper is to study the Bloch--Kato conjecture and the anticyclotomic Iwasawa theory of certain twists of the adjoint Galois representation attached to elliptic modular forms base changed to an imaginary quadratic field.

Our main result is the construction of an anticyclotomic Euler system in this setting, which we relate to an analogue of the Hida--Schmidt $p$-adic $L$-function for the symmetric square. By Kolyvagin's method for anticyclotomic split Euler systems, as developed by Jetchev--Nekov\'{a}\v{r}--Skinner, our results yield new cases of the Bloch--Kato conjecture in rank zero and a divisibility towards an Iwasawa main conjecture.

\subsection{The set-up}

Let $g\in S_l(N_g,\chi_g)$ be an ordinary newform of weight $l \geq 2$, level $N_g$, and nebentypus $\chi_g$. Let $K/\Q$ be an imaginary quadratic field, and let $\psi$ be a Hecke character of $K$ of infinity type $(1-k,0)$ for some even integer $k\geq 2$. We assume that the associated theta series $\theta_\psi\in S_k(N_\psi)$ has trivial nebentypus. Fix an odd prime $p\nmid 2N_gN_\psi$ and an embedding $\iota_p:\overline{\Q}\hookrightarrow\overline{\Q}_p$, and for simplicity in this Introduction assume that the Hecke field of $g$ and the values of $\psi$ are contained in a number field $L$ with a prime $\mathfrak{P}$ above $p$ such that $L_\mathfrak{P}=\Q_p$. We assume that $p$ splits in $K$ and is a prime of ordinary reduction for $g$ and, again for simplicity, that $p\nmid h_K$, the class number of $K$. We will also assume that $g$ is not of CM-type.

Let $V_g$ be the (dual to Deligne's) $p$-adic Galois representation attached to $g$, and denote by ${\rm ad}^0(V_g)\subset{\rm End}_{\Q_p}(V_g)$ the adjoint representation on the trace-zero endomorphisms of $V_g$. We consider the conjugate self-dual $G_K$-representation
\[
V:={\rm ad}^0(V_g)(\psi_{}^{-1})(1-k/2),
\]
%where ${\rm ad}^0(V_g)\subset{\rm End}_{\Q_p}(V_g)$ is the adjoint representation on the trace-zero endomorphisms of $V_g$,
where $(\psi^{-1})$ denotes the twist by the inverse of $\psi$ and $(1-k/2)$ is the twist by the $(1-k/2)$-th power of the $p$-adic cyclotomic character.
%and $\psi_\frk{P}$ is a $\frk{P}$-adic avatar of $\psi$.

\subsection{Euler systems and $p$-adic $L$-functions}

In this paper we construct an anticyclotomic Euler system for $V$ and relate it to an associated anticyclotomic $p$-adic $L$-function.

For a positive integer $m$ we write $K[m]$ for the maximal $p$-extension inside the ring class field of $K$ of conductor $m$. Denote by $\mathcal{S}'$ the set of all squarefree products of primes $q$ in the positive density set $\mathcal{P}'$ of Definition~\ref{def:P'}; in particular, these primes split in $K$. For any $p$-adic $G_K$-representation $W$ and a prime $\frk{q}$ of $K$, put
\[
P_{\frk{q}}(W;X)=\det(1-\Fr_{\frk{q}}^{-1}X\vert W^\vee(1)),
\]
where $\Fr_\frk{q}$ denotes an arithmetic Frobenius element for the prime $\frk{q}$ and $W^{\vee}$ denotes the contragredient representation of $W$. A natural lattice $T_g\subset V_g$ described in $\S\ref{subsec:Galrep}$ defines a lattice in $V$ denoted by $T$. Finally, let $H^1_{\rm Iw}(K[mp^\infty],T) = \varprojlim_rH^1(K[mp^r],T)$.

\begin{thmA}[Theorem~\ref{thm:ES-T}]\label{thm:ES:intro}
%Suppose that:
%\begin{itemize}
%\item $p$ splits in $K$,
%\item $p$ does not divide the class number of $K$.
%\end{itemize}
Assume that $H^1(K[mp^s],T)$ is torsion-free for all $m \in\mathcal{S}'$ and $s\geq 0$. There exists a collection of classes
\[
\left\lbrace\kapinftyns\in H^1_{\rm Iw}(K[mp^\infty],T)\;\colon\; m\in\mathcal{S}'\right\rbrace
\]
such that whenever $m, mq\in\mathcal{S}'$ with $q$ a prime, we have
\begin{equation}
{\rm cor}_{K[mq]/K[m]}(\kapinftynqs)=P_{\frk{q}}(V;{\rm Fr}_{\frk{q}}^{-1})\,\kapinftyns,\nonumber
\end{equation}
where $\mathfrak{q}$ is any of the primes of $K$ above $q$.
\end{thmA}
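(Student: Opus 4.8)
\emph{Overview.} The plan is to realize $V$, as a representation of $G_K$, inside an appropriate Tate twist of the triple tensor product $V_g\otimes V_g\otimes V_{\theta_\psi}$ — using $\mathrm{ad}^0(V_g)=\Sym^2 V_g\otimes(\det V_g)^{-1}\subset V_g\otimes V_g$ together with the splitting $V_{\theta_\psi}|_{G_K}\cong\psi\oplus\psi^\sigma$ — and to construct the classes $\kapinftyns$ as $p$-adic étale Abel--Jacobi images of a norm-compatible family of diagonal (Gross--Kudla--Schoen-type) cycles, indexed by the anticyclotomic twists $\psi\xi$ of $\psi$ by finite-order characters $\xi$ of $K$ of conductor dividing $mp^s$. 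The Euler-system relation will then be extracted from the Hecke action on the $\theta_{\psi\xi}$-component of the triple product and from the geometry of how these cycles degenerate when the conductor varies.

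\emph{Construction of the classes.} Fix $m\in\mathcal S$. For each $s\geq 0$, the theta series $\theta_{\psi\xi}$ attached to the characters $\xi$ of conductor dividing $mp^s$ form a single $\Gal(K[mp^s]/K)$-orbit; packaging the associated diagonal cycles on the triple product of Kuga--Sato varieties over the modular curves for $g$, $g$ and $\theta_{\psi\xi}$ (or, to treat all weights uniformly, working $\Lambda$-adically along a Hida family through the CM forms and specializing), one obtains a class whose image under the $p$-adic Abel--Jacobi map lies, after applying the Hecke idempotent onto the $(g,g,\theta_{\psi\xi})$-component and the projector onto $\mathrm{ad}^0(V_g)(\psi^{-1})(1-k/2)$, in $H^1(K[mp^s],V)$; passing from a class over $\Q$ for each individual $\xi$ to a single class over $K[mp^s]$ is Shapiro's lemma applied to $\mathrm{Ind}_K^\Q$. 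These projectors introduce denominators, so one gets $\Q_p$-coefficients a priori; it is here that the hypothesis that $H^1(K[mp^s],T)$ be torsion-free for all $s$ enters — it forces $H^1_{\mathrm{Iw}}(K[mp^\infty],T)$ to be torsion-free, hence to embed into the corresponding $V$-cohomology, and together with the integrality of the underlying cycle classes this singles out a canonical integral lift $\kappa^{(s)}_m\in H^1(K[mp^s],T)$.

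\emph{Norm relations.} Two compatibilities are needed. The vertical one, $\mathrm{cor}_{K[mp^{s+1}]/K[mp^s]}(\kappa^{(s+1)}_m)=\kappa^{(s)}_m$, upgrades the $\kappa^{(s)}_m$ to a single class $\kapinftyns\in H^1_{\mathrm{Iw}}(K[mp^\infty],T)=\varprojlim_s H^1(K[mp^s],T)$; since $p$ splits in $K$ and $g$ is ordinary at $p$, the Euler factor at $p$ controlling compatibility in the $p$-tower is a $p$-adic unit, and a $U_p$-stabilization of the CM family together with the usual control argument removes it. The horizontal (tame) relation is the heart of the statement: for $m,mq\in\mathcal S'$ with $q$ a rational prime — the defining conditions of $\mathcal P'$ ensuring that $q$ is unramified in all the data and that $\Fr_{\mathfrak q}$ acts with the regularity that makes the local cohomology of $V$ at $q$ of the shape Kolyvagin's descent requires — passing from conductor $mq$ to conductor $m$ is a degeneracy/pushforward map between the cycles, inducing on cohomology a Hecke correspondence at $q$ that acts as (essentially) the $q$-th Hecke operator on the $\theta_{\psi\xi}$-component. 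Feeding in the Eichler--Shimura relation and the identity expressing $a_q(\theta_{\psi\xi})$ in terms of the values $\psi\xi(\mathfrak q)$ at the prime(s) $\mathfrak q$ above $q$ (which is what links $\theta_{\psi\xi}$ of conductors $mq$ and $m$), one computes that $\mathrm{cor}_{K[mq]/K[m]}(\kapinftynqs)$ equals $\kapinftyns$ times the reciprocal characteristic polynomial of $\Fr_{\mathfrak q}$ on $V^\vee(1)$, i.e.\ times $P_{\mathfrak q}(V;\Fr_{\mathfrak q}^{-1})$; the identification $\mathrm{ad}^0(V_g)=\Sym^2 V_g\otimes(\det V_g)^{-1}$ shows this is exactly the expected degree-three Euler factor built from $\alpha_q,\beta_q$ and $\psi(\mathfrak q)$. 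The construction of the previous step applies to every $m\in\mathcal S$, while the norm relation is asserted, and needed, only for $m,mq\in\mathcal S'$.

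\emph{Main obstacle.} The crux is the tame norm relation, and specifically getting the \emph{exact} Euler factor $P_{\mathfrak q}(V;\Fr_{\mathfrak q}^{-1})$ rather than a single Hecke eigenvalue. Because $V$ is three-dimensional over $G_K$ (degree six after induction to $\Q$), the local computation at $q$ is markedly more delicate than in the Heegner-point or $\mathrm{GL}_2\times\mathrm{GL}_1$ cases: one must track precisely how the diagonal cycle degenerates as the level at $q$ varies, invoke compatibility of the Abel--Jacobi map with correspondences, and reconcile three Frobenius eigenvalues simultaneously. A second, genuine technical point, already visible in the construction step, is integrality: the isotypic projectors live only over $\Q_p$, so obtaining classes valued in the lattice $T$ — and not merely in $V$ — is exactly what the torsion-freeness hypothesis buys, and this integral structure must then be carried through the inverse limit of the vertical relation and shown to be compatible with the corestriction maps appearing in the tame relation.
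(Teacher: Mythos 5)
There is a genuine gap, and it sits exactly where you flag the ``main obstacle'': the tame norm relation. A diagonal-cycle construction of the kind you describe (triple-product cycles for $(g,g,\theta_{\psi\xi})$, or equivalently the $\Lambda$-adic classes for the tensor product) naturally produces classes for the four-dimensional $G_K$-representation $\Vrep=V_g\otimes V_{g^*}(\psi^{-1})(1-c)$, and the norm relation it yields at a split prime $q$ carries the \emph{tensor-product} Euler factor $P_{\mathfrak{q}}(\Vrep;\mathrm{Fr}_{\mathfrak{q}}^{-1})$. Projecting the classes onto the summand $T\subset\Trep$ does not change this factor: the Euler factor is an element of the group ring acting on $H^1_{\mathrm{Iw}}(K[mp^\infty],T)$ through the Galois action, and since $P_{\mathfrak{q}}(\Vrep;X)=\bigl(1-\psi(\mathfrak{q})X q^{-k/2}\bigr)P_{\mathfrak{q}}(V;X)$, the projected classes satisfy the relation with an unwanted extra linear factor coming from the one-dimensional summand $V'$. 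Your claim that the degeneracy/pushforward computation together with Eichler--Shimura ``computes'' the degree-three factor $P_{\mathfrak{q}}(V;\mathrm{Fr}_{\mathfrak{q}}^{-1})$ directly is unsupported; no amount of tracking the cycle geometry or applying the $\mathrm{ad}^0$-projector removes the factor corresponding to $V'$, and this discrepancy is precisely the difficulty the theorem has to overcome.

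This is also why the specific defining conditions of $\mathcal{P}'$ matter, and your proposal never actually uses them: in the paper they are not generic ``regularity for Kolyvagin descent'' conditions but are chosen so that $q\equiv 1\pmod p$ and $\mathrm{Fr}_q-1$ is invertible on $T'$, which makes the element $1-\psi(\mathfrak{q})q^{-k/2}F_{\mathfrak{q}}^{-1}$ a \emph{unit} in $\mathbb{Z}_p\dBr{\Gamma_m}$ (the argument of Loeffler--Zerbes, Thm.~5.3.3 of \cite{LZ}). One then defines the Euler system classes by multiplying the projected classes by $\prod_{q\mid m}\bigl(1-\psi(\mathfrak{q})q^{-k/2}F_{\mathfrak{q}}^{-1}\bigr)^{-1}$, and the corrected classes satisfy the stated relation with $P_{\mathfrak{q}}(V;\mathrm{Fr}_{\mathfrak{q}}^{-1})$. (The rest of your outline---classes from diagonal cycles with vertical relations and integral lattices, which the paper simply imports from \cite{ACR} rather than reconstructing---is consistent with the paper, modulo the hand-waving about integrality; but without the factor-stripping step the tame relation you assert is false as stated.)
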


We obtain the Euler system classes $\kapinftyns$ from a suitable modification of the diagonal Euler system classes $\kappa_{\psi,g,g^*,m,\infty}$ for
\[
\Vrep:=V_g\otimes V_{g^*}(\psi_{}^{-1})(1-c)
%\simeq V\oplus V'
\]
constructed in \cite{ACR},
%and attached to the triple $(\theta_\psi,g,g^*)$,
where $g^*=g\otimes\chi_{g}^{-1}$ is the twist of $g$ by the inverse of its nebentypus, and $c=(k+2l-2)/2$.
%and $V'=\Q_p(\psi^{-1})(1-k/2)$.
It follows from our construction (and the results of \cite{BSV} that it builds upon) that $\kapinftyns$ lands in the balanced Selmer groups ${\rm Sel}_{\rm bal}(K[mp^\infty],T)$ introduced in $\S\ref{subsection:Selmer}$.

Next we are interested in the non-triviality of our Euler system in terms of $L$-values. To this end, in $\S\ref{sec:Lp}$ we use some basic instances of Langlands functoriality to deduce from the work of Eischen--Harris--Li--Skinner \cite{EHLS} the construction of a $p$-adic $L$-function
\[
L_p({\rm ad}^0(g_K)\otimes\psi)\in{\rm Frac}\,\Lambda^{\rm ac}
\]
interpolating the central $L$-value $L(V,0)$ and its twists by a $p$-adic family of anticyclotomic Hecke characters. Here $\Lambda^{\rm ac}$ is the Iwasawa algebra of the Galois group $\Gamma^{\ac}$ of the anticyclotomic $\Z_p$-extension $K_\infty/K$. Denoting by $\kapinftys$ the image of $\kapinftyones$ in ${\rm Sel}_{\rm unb}(K_\infty,T)$, we can then prove the following.

Write $(p)=\fp\overline{\fp}$, with $\fp$ the prime of $K$ above $p$ induced by $\iota_p$. For the following result, let $K_{\infty,\overline{\fp}}$ be the $\mathbb Z_p$-extension of $K$ unramified outside $\overline{\fp}$, and let $\mathscr{F}_{\overline{\fp}}^{\rm bal}(\Trep)$ denote the subspace of $\Trep$ defined in Section \ref{subsection:Selmer}.

%Building on the explicit reciprocity law of \cite{BSV},

\begin{thmB}[Corollary~\ref{cor:ERL}]\label{thm:intro-B}
Under some technical hypotheses on $\psi$, there is a Perrin-Riou big logarithm map $\mathfrak{Log}:H^1_{\rm Iw}(K_{\infty,\overline{\fp}},\mathscr{F}_{\overline{\fp}}^{\rm bal}(\Trep))\longrightarrow\Z_p^{\rm ur}\dBr{\Gamma^{\rm ac}}$ such that
%sending ${\rm res}_{\overline{\fp}}(\kapinfty)$ to a square-root of the product
\[
\mathfrak{Log}({\rm res}_{\overline{\fp}}(\kapinftys))^2=L_p({\rm ad}^0(g_K)\otimes\psi)\cdot\mathscr{L}_{\fp}^{\rm Katz}(\psi)^{-,\iota}
\]
up to multiplication by an element in $\overline{\Q}_p^\times$,  where $\mathscr{L}_{\fp}^{\rm Katz}(\psi)^{-,\iota}$ is an anticyclotomic projection of Katz's $p$-adic $L$-function.
\end{thmB}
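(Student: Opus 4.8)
The plan is to deduce Theorem~B from an explicit reciprocity law for the balanced diagonal classes underlying our Euler system, combined with a factorization of $p$-adic $L$-functions that reflects the isomorphism $V_g\otimes V_{g^*}\cong({\rm ad}^0(V_g)\oplus\Q_p)$ up to a Tate twist and exploits the fact that $\theta_\psi$ has complex multiplication by $K$.

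First I would unwind the construction behind Theorem~\ref{thm:ES-T}: up to a twist and the projection to the ${\rm ad}^0$-isotypic part, $\kapinftys$ is the image over $K_\infty$ of the diagonal classes attached to $\Vrep=V_g\otimes V_{g^*}(\psi^{-1})(1-c)$ built in \cite{ACR} following \cite{BSV}. Restricting to $G_K$ and using the decomposition above, $\Vrep|_{G_K}$ contains, up to a Tate twist, both $V$ and a Hecke character $\Xi$ of $K$ cut out by the $\Q_p$-line in ${\rm ad}(V_g)$ (a cyclotomic twist of $\psi^{-1}$); correspondingly the diagonal class decomposes along these summands, and by Artin formalism the Garrett triple-product $L$-function of $(g,g^*,\theta_\psi)$ factors, at the relevant arguments, as $L({\rm ad}^0(g)|_{G_K}\otimes\psi,\cdot)\cdot L(\psi,\cdot)$, i.e.\ as a central adjoint $L$-value over $K$ times a Hecke $L$-value.

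Next I would construct $\mathfrak{Log}$ as the Perrin--Riou big logarithm attached to the ordinary local representation $V|_{G_{K_{\overline{\fp}}}}$ over the anticyclotomic tower, and invoke the explicit reciprocity law of \cite{BSV}, which computes $\mathfrak{Log}$ of the $p$-restriction of the diagonal class in terms of the triple-product $p$-adic $L$-function of $(g,g^*,\theta_\psi)$. Because these classes lie in the \emph{balanced} Selmer group (see $\S\ref{subsection:Selmer}$), the point at which $\mathfrak{Log}$ is evaluated falls outside the interpolation range of each factor of that $p$-adic $L$-function; as in the factorization of generalized Kato classes in the style of Darmon--Rotger (ultimately the Bertolini--Darmon--Prasanna phenomenon for Heegner points), and using the conjugate self-duality of $V$ to relate ${\rm res}_{\fp}$ and ${\rm res}_{\overline{\fp}}$, this forces a quadratic relation, so that $\mathfrak{Log}({\rm res}_{\overline{\fp}}(\kapinftys))^2$ equals, up to $\overline{\Q}_p^\times$, the product of the two one-variable $p$-adic $L$-functions associated with the $V$- and $\Xi$-pieces. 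The $V$-factor, an analogue over $K$ of the symmetric square $p$-adic $L$-function of Hida and Schmidt, is then matched with $L_p({\rm ad}^0(g_K)\otimes\psi)$ of $\S\ref{sec:Lp}$ by comparing interpolation formulas, the functoriality transfer to a unitary group over $K$ there being arranged so that \cite{EHLS} interpolates precisely these central values; the $\Xi$-factor is matched with $\mathscr{L}_{\fp}^{\rm Katz}(\psi)^{-,\iota}$ using that $p=\fp\overline{\fp}$ splits in $K$, the identity $L(\theta_\psi,s)=L(\psi,s)$, and Katz's interpolation formula, with the involution $\iota$ and the superscript $-$ recording that we have taken ${\rm res}_{\overline{\fp}}$ rather than ${\rm res}_{\fp}$ and passed to the anticyclotomic variable.

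I expect the main obstacle to be the reconciliation of the transcendental periods attached to the three $p$-adic $L$-functions involved: the Petersson/Garrett periods implicit in the \cite{BSV} reciprocity law, the CM-type periods of \cite{EHLS}, and Katz's CM periods. As the identity is only asserted up to $\overline{\Q}_p^\times$, it is enough to match these period ratios at a single interpolation point and then argue by $\Lambda^{\rm ac}$-density; but doing so requires carefully aligning the automorphic normalizations of \cite{EHLS} with those underlying the \cite{BSV} reciprocity law and with the Katz measure, and checking that the technical hypotheses on $\psi$ ensure both that $\mathfrak{Log}$ is well defined and non-degenerate and that no interpolation factor at $p$ in the factorization vanishes identically.
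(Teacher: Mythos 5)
Your skeleton---an explicit reciprocity law for the diagonal class combined with an Artin-formalism factorization of $p$-adic $L$-functions---agrees with the paper's, but the mechanism you propose for the factorization is not the one that works here, and it conceals a genuine gap. You assert that ``the point at which $\mathfrak{Log}$ is evaluated falls outside the interpolation range of each factor'' and that one must therefore argue as in the Darmon--Rotger/BDP factorizations of generalized Kato classes. The opposite is true, and it is the main simplifying feature of this setting (the paper stresses this explicitly): the identity to be proved is one of elements of $\Z_p^{\rm ur}\dBr{\Gamma^{\rm ac}}$, and the characters $\xi$ of infinity type $(-n,n)$ with $n\ge l-1$ lie \emph{simultaneously} in the interpolation ranges of Hsieh's triple-product $p$-adic $L$-function, of the Eischen et al.\ $p$-adic $L$-function $L_p({\rm ad}^0(g_K)\otimes\psi)$, and of Katz's $p$-adic $L$-function. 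The factorization (Theorem~\ref{thm:factor}) is therefore proved by directly comparing interpolation formulas on this Zariski-dense common set; no argument outside the range of interpolation is needed, and the one you sketch is not supplied with any actual mechanism. Relatedly, the square on the left-hand side does not come from conjugate self-duality relating ${\rm res}_{\fp}$ and ${\rm res}_{\overline{\fp}}$: it comes from the fact that Hsieh's $\mathscr{L}_p(\hf,g,g^*)$ is a square-root $p$-adic $L$-function (its square interpolates the central $L$-value divided by $(2\pi i)^{2n+2}\langle\theta_{\psi_0\xi_n},\theta_{\psi_0\xi_n}\rangle^{2}$), so one simply squares the identity $\mathfrak{Log}({\rm res}_{\overline{\fp}}(\kapinftys))=h_K\cdot\mathscr{L}_{\fp,\psi}^{\rm Katz}(K)^-\cdot\mathscr{L}_p({\rm ad}(g_K)\otimes\psi)$ of Theorem~\ref{thm:ERL} and then applies the factorization.

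The second missing ingredient is the role of the CM congruence ideal, which is what the ``technical hypotheses on $\psi$'' are actually for (not the well-definedness of $\mathfrak{Log}$). Hsieh's formula has the Petersson norm $\langle\theta_{\psi_0\xi_n},\theta_{\psi_0\xi_n}\rangle^2$ in the denominator, whereas the unitary-group and Katz $L$-functions are normalized by the CM periods $\Omega_\infty,\Omega_p$. The bridge is Hida's adjoint $L$-value formula together with the class number formula, which converts the Petersson norm into $L(\psi_0^{1-\tau}\xi^{1-\tau},1)$ and hence into a value of a \emph{second} anticyclotomic projection $\mathscr{L}_{\fp,\psi}^{\rm Katz}(K)^-$ of Katz's $p$-adic $L$-function; by the Hida--Tilouine anticyclotomic main conjecture, under (H1)--(H3) this projection times $h_K$ generates the congruence ideal of $\boldsymbol{\theta}_{\psi_0}$, which is precisely the factor needed so that $\mathfrak{Log}$ of the class lands in $\Z_p^{\rm ur}\dBr{\Gamma^{\rm ac}}$. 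Without this step the period reconciliation you correctly flag as the main obstacle cannot be completed (and matching periods ``at a single interpolation point'' would not suffice in any case, since the discrepancy between the two sides varies with $n$ and must itself be interpolated by a unit). Finally, you should record the small but necessary observation that ${\rm res}_{\overline{\fp}}(\kapinfty)={\rm res}_{\overline{\fp}}(\kapinftys)$, i.e.\ projecting the diagonal class to the ${\rm ad}^0$-summand does not change its restriction at $\overline{\fp}$, because $\mathscr{F}_{\overline{\fp}}^{\rm bal}(\Trep)\subset T$; this is what lets you apply the reciprocity law, stated for the full class, to the Euler system class.
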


The proof of this result builds on the explicit reciprocity law of \cite{BSV} and a factorization formula for Hsieh's triple product $p$-adic $L$-function (see Theorem~\ref{thm:factor}). This factorization is a $p$-adic manifestation of the Artin formalism arising from the decomposition
\begin{equation}\label{eq:decV-intro}
%V_g\otimes V_{g^*}(\psi^{-1})(1-c)
\Vrep\simeq V\oplus V',
\end{equation}
where $V'=\Q_p(\psi^{-1})(1-k/2)$, and may be seen as an anticyclotomic analogue of Dasgupta's factorization \cite{Das}. However, the proof in our case is largely simplified by the fact that the $p$-adic $L$-functions involved have overlapping ranges of $p$-adic interpolation.
%Indeed, in our case the weight ranges for which $s=0$ is a central critical value of the $L$-function attached to $V$ and $V'$ is given by the following table:
%\begin{center}
%\begin{tabular}{c|c|c|}
%\cline{2-3}
% & $V$ & $V'$ \\
% \hline
%\multicolumn{1}{|c|}{$k\geq 2l$} & critical & critical  \\
% \hline
% \multicolumn{1}{|c|}{$k< 2l$} & noncritical & critical  \\
% \hline
%
%\end{tabular}
%\end{center}
%while the $p$-adic $L$-function for $\Vrep$ interpolates the central %critical values $L(\Vrep,0)$ in the range $k\geq 2l$.

The technical hypotheses on $\psi$ %in Theorem~\ref{thm:intro-B}
are used to ensure that the congruence ideal of a Hida family attached to $\psi$ is generated by a second anticyclotomic projection of Katz's $p$-adic $L$-function, which in turn interpolates the ratio between two different types of periods.

\subsection{Applications}

Using Kolyvagin's methods, as developed by Jetchev--Nekov\'{a}\v{r}--Skinner \cite{JNS} in the split anticyclotomic setting, we can deduce bounds on Selmer groups from the non-triviality of our Euler system.  Our main result in this direction is the proof of new cases of the Bloch--Kato conjecture \cite{BK} in rank zero.

For the statement, we denote by $\varepsilon_\ell$ the epsilon factor attached to the Weil--Deligne representation associated with the restriction of $\Ind_{K}^{\bb{Q}}(\Vrep)$ to $G_{\Q_\ell}$. It is then known that the sign $\varepsilon(\Vrep)$ in the functional equation for $L(\Vrep,s)$ is given by
\[
\varepsilon(\Vrep)=\prod_{\ell\leq\infty}\varepsilon_\ell,
\]
where $\varepsilon_\infty=+1$ if $k\geq 2l$ and $-1$ if $2\leq k<2l$. On the other hand, here we say that $V$ has ``big image'' if it safisfies the explicit conditions in Proposition~\ref{prop:existence-of-sigma}.

\begin{thmC}[Theorem~\ref{thm:thmC}]
In addition to the above hypotheses, assume that:
%Assume hypotheses (h1)--(h6), and in addition that:
\begin{itemize}
\item[\rm (a)] $\varepsilon_{\ell} = +1$ for all primes $\ell\mid N_gN_\psi$,
\item[\rm (b)] ${\rm gcd}(N_g,N_\psi)$ is squarefree,
\item[\rm (c)] $g$ is non-Eisenstein mod $p$,
\item[\rm (d)] $V$ has big image,
\item[\rm (e)] $L(\theta_\psi,k/2) \neq 0$.
\end{itemize}
If $k\geq 2l$ then the following implication holds:
\[
L(V,0)\neq 0\quad\Longrightarrow\quad \Sel(K,V)=0,
\]
where $\Sel(K,V)$ is the Bloch--Kato Selmer group.
\end{thmC}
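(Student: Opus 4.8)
The plan is to deduce the vanishing of $\Sel(K,V)$ from the nontriviality of the bottom-layer Euler system class via the Kolyvagin-type machinery of Jetchev--Nekov\'{a}\v{r}--Skinner \cite{JNS}. First I would record that, by Theorem~A applied with $m$ ranging over $\mathcal{S}'$, together with the base-change of the classes to $K$ and the compatibility under corestriction, the classes $\{\kapns\}$ form an anticyclotomic Euler system for $T$ in the sense required by \cite{JNS}; here hypotheses (c) and (d) guarantee the big-image and non-Eisenstein conditions that make the argument of \cite{JNS} applicable, while (b) and (a) are the local conditions (squarefree conductor overlap, and all local signs $+1$) needed so that the relevant ramified Selmer structure is self-dual and the Kolyvagin system bounds the Bloch--Kato Selmer group rather than a modified one. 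The upshot of this step is the implication
\[
\kapones\neq 0 \quad\Longrightarrow\quad \Sel(K,V)=0,
\]
i.e. it suffices to prove nonvanishing of the bottom class $\kapones$ (equivalently of its image $\kappa_{\psi,\mathrm{ad}^0(g),1}$ in $H^1(K,T)$) under the hypothesis $L(V,0)\neq 0$.

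Next I would translate this nonvanishing into an $L$-value statement using Theorem~B. Taking $\res_{\overline{\fp}}$ of $\kapinftys$ and applying the Perrin-Riou big logarithm $\mathfrak{Log}$, Theorem~B gives
\[
\mathfrak{Log}(\res_{\overline{\fp}}(\kapinftys))^2 = L_p(\mathrm{ad}^0(g_K)\otimes\psi)\cdot\mathscr{L}_{\fp}^{\rm Katz}(\psi)^{-,\iota}
\]
up to a nonzero scalar. Specializing this identity of Iwasawa functions at the trivial character of $\Lambda^{\rm ac}$, the interpolation property of $L_p(\mathrm{ad}^0(g_K)\otimes\psi)$ yields (up to nonzero scalars and an explicit ratio of periods and local Euler-type factors, which are nonzero under the running hypotheses and $p\nmid 2N_gN_\psi$) the central value $L(V,0)$ on the right-hand side — this is exactly the range $k\geq 2l$ in which $s=0$ is a central critical point for $V$, matching the interpolation range of $L_p(\Vrep,0)$ in the table. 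The factor $\mathscr{L}_{\fp}^{\rm Katz}(\psi)^{-,\iota}$ at the trivial character is an anticyclotomic Katz $p$-adic $L$-value, whose nonvanishing is controlled by $L(\theta_\psi,k/2)$ via the decomposition $\Vrep\simeq V\oplus V'$ with $V'=\Q_p(\psi^{-1})(1-k/2)$ and the Katz interpolation formula; this is precisely what hypothesis (e) is inserted to guarantee.

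Combining the two steps: the hypotheses $L(V,0)\neq 0$ (given) and $L(\theta_\psi,k/2)\neq 0$ (hypothesis (e)) force the right-hand side of the specialized Theorem~B identity to be nonzero, hence $\mathfrak{Log}(\res_{\overline{\fp}}(\kappa_{\psi,\mathrm{ad}^0(g),\infty}))\neq 0$ at the bottom layer, hence $\kappa_{\psi,\mathrm{ad}^0(g),1}\neq 0$ in $H^1(K,T)$, since a class whose localization at $\overline{\fp}$ is detected by a nonzero big logarithm cannot itself vanish. Feeding this into the implication from the first step gives $\Sel(K,V)=0$, as desired. I expect the main obstacle to be bookkeeping in the second step: verifying that all the auxiliary factors appearing when one specializes the two-variable identity of Theorem~B to the trivial character — the $p$-adic periods, the local interpolation fudge factors at $p$, and the precise normalization relating $\mathscr{L}_{\fp}^{\rm Katz}(\psi)^{-,\iota}$ at the trivial character to $L(\theta_\psi,k/2)$ — are genuinely nonzero under (a)--(e), and that the square on the left-hand side does not introduce a spurious obstruction (it does not, since we only need a nonvanishing conclusion). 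The first step is essentially a citation of \cite{JNS} once the Euler system axioms and big-image hypotheses are checked, which is where (c) and (d) do their work.
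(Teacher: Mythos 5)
Your middle steps reproduce the paper's analytic argument faithfully: by the interpolation formulas, $L(V,0)$ and $L(\theta_\psi,k/2)$ are, up to nonzero factors, the values of $L_p({\rm ad}^0(g_K)\otimes\psi)$ and $\mathscr{L}_{\fp}^{\rm Katz}(\psi)^{-,\iota}$ at the trivial character (the condition $k\geq 2l$ being exactly what places the trivial character in the interpolation range), and then the factorization of Theorem~\ref{thm:factor} together with the reciprocity law of Corollary~\ref{cor:ERL} gives the nonvanishing of $\mathfrak{Log}({\rm res}_{\overline{\fp}}(\kapinftys))$ at the bottom layer. This is precisely how the paper proceeds.

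The gap is in your first step. The implication you attribute to the Euler system machinery, namely $\kapones\neq 0\Rightarrow\Sel(K,V)=0$, is not what \cite{JNS} (or Theorem~\ref{thm:A} of the paper) provides, and it cannot be: in the range $k\geq 2l$ the Bloch--Kato Selmer group is the \emph{unbalanced} one, $\Sel(K,V)=\Sel_{\rm unb}(K,V)$ by Lemma~\ref{lem:BK}, whereas the Euler system classes satisfy the \emph{balanced} local condition at $p$. What the machinery gives from $\kaps\neq 0$ is the rank-one statement $\dim_E\Sel_{\rm bal}(K,V)=1$ (with $\kaps$ spanning it), not the vanishing of the unbalanced group; indeed, if $\kaps$ happened to lie in $\Sel_{\rm unb}(K,V)$, its mere nonvanishing would be compatible with $\Sel(K,V)\neq 0$. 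The paper's proof therefore does not stop at $\kaps\neq 0$: it uses the rank-one bound on $\Sel_{\rm bal}(K,V)$, the fact that the nonvanishing of $\mathfrak{Log}({\rm res}_{\overline{\fp}}(\kaps))$ shows this generator has nontrivial image in $H^1(K_{\overline{\fp}},V)/H^1_{\rm unb}(K_{\overline{\fp}},V)$, and then Poitou--Tate global duality (the argument of \cite[Thm.~9.5]{ACR}) to conclude $\Sel_{\rm unb}(K,V)=0$. You have all the needed ingredients (the nonvanishing of the localization, not just of the class), but you only use them to deduce $\kapones\neq 0$ and then invoke a direct rank-zero bound that the cited results do not furnish; the duality step is missing. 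A minor further inaccuracy: hypotheses (a) and (b) are not about making the Kolyvagin system bound the Bloch--Kato structure; they are the hypotheses needed for Hsieh's triple product $p$-adic $L$-function and hence for the factorization of Theorem~\ref{thm:factor}.
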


Note that the hypotheses in Theorem~C imply that $L(V,s)$ has sign $+1$ in its functional equation, and so the nonvanishing of $L(V,0)$ is expected to hold generically.

We can also deduce applications to the Iwasawa main conjecture for $V$. More precisely, under certain hypotheses, Greenberg's general formulation of the Iwasawa main conjecture for motives \cite{Gr94} leads to the prediction that the unbalanced Selmer group ${\rm Sel}_{\rm unb}(K_\infty,A)$ defined in $\S\ref{sec:Selmer}$, where $A=V/T$, is $\Lambda^{\rm ac}$-cotorsion, with characteristic ideal generated by $L_p({\rm ad}^0(g_K)\otimes\psi)$. In the direction of this conjecture we can prove the following, where we let $\Z_p^{\rm ur}$ denote the completion of the ring of integers of the maximal unramified extension of $\Q_p$.

\begin{thmD}[Theorem~\ref{thm:thmD}]
In addition to the above hypotheses, assume that:
%Assume hypotheses (h1)--(h6), and in addition that:
\begin{itemize}
\item[\rm (a)] $\varepsilon_{\ell} = +1$ for all primes $\ell\mid N_gN_\psi$,
\item[\rm (b)] ${\rm gcd}(N_g,N_\psi)$ is squarefree,
\item[\rm (c)] $g$ is non-Eisenstein mod $p$,
\item[\rm (d)] $V$ has big image,
\item[\rm (e)] $\theta_\psi$ has global root number $\varepsilon(\theta_\psi)=+1$.
\end{itemize}
If the $p$-adic $L$-function $L_p({\rm ad}^0(g_K)\otimes\psi)$ is nonzero, then the Pontryagin dual of ${\rm Sel}_{\rm unb}(K_\infty,A)$ is $\Lambda^{\rm ac}$-torsion, with
\[
\Char_{\Lambda^{\rm ac}}\bigl({\rm Sel}_{\rm unb}(K_\infty,A)^\vee\bigr)\supset\bigl(L_p({\rm ad}^0(g_K)\otimes\psi)\cdot\mathscr{L}_{\fp}^{\rm Katz}(\psi)^{-,\iota}\bigr)
\]
in $\Z_p^{\rm ur}\hat\otimes_{\Z_p}\Lambda^{\rm ac}[1/p]$.
%\end{enumerate}
\end{thmD}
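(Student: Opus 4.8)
The plan is to derive Theorem~D from the Euler system machinery of Jetchev--Nekov\'a\v r--Skinner \cite{JNS}, fed by the anticyclotomic Euler system of Theorem~A and its relation to $L$-functions in Theorem~B. First I would recall the structural input: the classes $\kapinftyns$ of Theorem~A form a genuine anticyclotomic Euler system for $T$, landing (by the remark following Theorem~A) in the balanced Selmer groups, and their images $\kapinftys$ in $\Sel_{\rm unb}(K_\infty,T)$ are the relevant bottom classes. The hypotheses (a)--(d) are exactly what is needed to run the Kolyvagin argument of \cite{JNS}: (a) together with (b) guarantees (via the epsilon-factor computation recalled before Theorem~C, and the local conditions defining $\mathcal{P}'$ in Definition~\ref{def:P'}) that the ramified primes contribute trivially to the relevant local sign computations, so that there is no local obstruction to the Euler system argument; (c) ensures the residual representation is sufficiently irreducible (non-Eisenstein) so that $H^1$ torsion-freeness and the vanishing of $H^0$, $H^2$ of the residual representation hold; and (d) provides the Galois element $\sigma$ of Proposition~\ref{prop:existence-of-sigma} needed to bound the error terms in Kolyvagin's derivative classes. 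Hypothesis (e), that $\theta_\psi$ has root number $+1$, is the global sign condition placing us in the situation where the non-triviality of the bottom class is governed by the $p$-adic $L$-function rather than forced to vanish; compare the $\varepsilon_\infty$ discussion before Theorem~C.

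Next I would invoke the main anticyclotomic Euler system divisibility of \cite{JNS}: under (a)--(d), if the image $\kapinftys\in\Sel_{\rm unb}(K_\infty,T)$ is not $\Lambda^{\rm ac}$-torsion, then $\Sel_{\rm unb}(K_\infty,A)^\vee$ is a torsion $\Lambda^{\rm ac}$-module and
\[
\Char_{\Lambda^{\rm ac}}\bigl(\Sel_{\rm unb}(K_\infty,A)^\vee\bigr)\supset\Char_{\Lambda^{\rm ac}}\bigl(\Sel_{\rm unb}(K_\infty,T)/\Lambda^{\rm ac}\cdot\kapinftys\bigr).
\]
So the crux is to (i) show $\kapinftys$ is non-torsion, and (ii) identify the index-type characteristic ideal on the right with $\bigl(L_p({\rm ad}^0(g_K)\otimes\psi)\cdot\mathscr{L}_{\fp}^{\rm Katz}(\psi)^{-,\iota}\bigr)$. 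Both are handled by Theorem~B: the Perrin-Riou big logarithm $\mathfrak{Log}$ is (by Perrin-Riou/Coleman theory) an injective pseudo-isomorphism on the relevant local cohomology after inverting $p$, and the explicit reciprocity law reads
\[
\mathfrak{Log}({\rm res}_{\overline{\fp}}(\kapinftys))^2=L_p({\rm ad}^0(g_K)\otimes\psi)\cdot\mathscr{L}_{\fp}^{\rm Katz}(\psi)^{-,\iota}.
\]
Hence the hypothesis $L_p({\rm ad}^0(g_K)\otimes\psi)\neq 0$ forces ${\rm res}_{\overline{\fp}}(\kapinftys)\neq 0$, so $\kapinftys$ is non-torsion, giving (i). For (ii), one combines the reciprocity law with the fact that the local-at-$\overline{\fp}$ condition cutting out $\Sel_{\rm unb}$ from $\Sel_{\rm bal}$ is precisely the one on which $\mathfrak{Log}$ is defined: a standard Poitou--Tate/diagram-chase argument (as in \cite{JNS} or Castella's work on the anticyclotomic main conjecture) shows that $\Char\bigl(\Sel_{\rm unb}(K_\infty,T)/\Lambda^{\rm ac}\cdot\kapinftys\bigr)$ divides $\bigl(\mathfrak{Log}({\rm res}_{\overline{\fp}}(\kapinftys))\bigr)$, and the square in the reciprocity law is absorbed by passing to $\Z_p^{\rm ur}\hat\otimes\Lambda^{\rm ac}[1/p]$ (where one may extract the relevant factor, or simply note that the displayed divisibility is stated with the product $L_p\cdot\mathscr{L}^{\rm Katz}$, which is exactly $\mathfrak{Log}({\rm res}_{\overline{\fp}}(\kapinftys))^2$). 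The scalar ambiguity in $\overline{\Q}_p^\times$ from Theorem~B is harmless since characteristic ideals are only defined up to units, and the passage to $\Z_p^{\rm ur}$ coefficients accounts for the period/unramified-twist discrepancies flagged in the discussion of the technical hypotheses on $\psi$.

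The main obstacle, as in all such Euler-system-to-main-conjecture arguments, is verifying that the input hypotheses of \cite{JNS} are genuinely met in our setting --- in particular that the Euler system classes satisfy the precise normalization, local, and integrality conditions required there (torsion-freeness of $H^1(K[mp^s],T)$, which is hypothesized in Theorem~A; the local conditions at primes dividing $N_gN_\psi$, controlled by (a)--(b); and the big image condition (d) supplying a suitable $\sigma$). A secondary subtlety is the bookkeeping between the three Selmer structures (balanced, unbalanced, and the Greenberg-style one predicted by \cite{Gr94}): one must check that $\kapinftys$, which a priori lives in $\Sel_{\rm bal}$, really maps to a class in $\Sel_{\rm unb}$ with the correct local behavior at $\overline{\fp}$ for $\mathfrak{Log}$ to apply, and that the resulting divisibility is the one Greenberg's conjecture predicts. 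Once these compatibilities are in place, the argument is the standard combination of \cite{JNS} plus the explicit reciprocity law, and the divisibility follows as stated.
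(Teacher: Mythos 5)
Your overall architecture (JNS machinery plus the explicit reciprocity law) is the right one, but there are two genuine gaps, one of which is a structural inconsistency.

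First, the Selmer-group bookkeeping is backwards. The unbalanced local condition at $\overline{\fp}$ is the \emph{zero} subspace (see $\mathscr{F}_v^{\rm unb}$ in \eqref{eq:local-unb}), so a class lies in $\Sel_{\rm unb}(K_\infty,T)$ only if its restriction at $\overline{\fp}$ vanishes. You simultaneously assert that $\kapinftys\in\Sel_{\rm unb}(K_\infty,T)$ is the input to the Euler system machine \emph{and} that ${\rm res}_{\overline{\fp}}(\kapinftys)\neq 0$; these are mutually exclusive. The paper's argument runs the JNS machinery on the \emph{balanced} Selmer group, where the classes actually live, concluding that $X_{\rm bal}(K_\infty,A)$ and $\Sel_{\rm bal}(K_\infty,T)$ both have $\Lambda^{\rm ac}$-rank one with
\[
\Char_{\Lambda^{\rm ac}}\bigl(X_{\rm bal}(K_\infty,A)_{\tors}\bigr)\supset\Char_{\Lambda^{\rm ac}}\biggl(\frac{\Sel_{\rm bal}(K_\infty,T)}{\Lambda^{\rm ac}\cdot\kapinftys}\biggr)^{2}
\]
(note the square, which your version of the JNS divisibility drops and which is exactly what produces $\mathfrak{Log}({\rm res}_{\overline{\fp}}(\kapinftys))^2=L_p({\rm ad}^0(g_K)\otimes\psi)\cdot\mathscr{L}_{\fp}^{\rm Katz}(\psi)^{-,\iota}$ on the right-hand side of the final statement). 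The cotorsionness of $\Sel_{\rm unb}(K_\infty,A)$ and the displayed divisibility are then obtained by Poitou--Tate duality comparing the balanced and unbalanced conditions at $\overline{\fp}$: the nonvanishing of $\mathfrak{Log}({\rm res}_{\overline{\fp}}(\kapinftys))$ means precisely that $\kapinftys$ is \emph{not} unbalanced, and this defect is what bounds $\Sel_{\rm unb}(K_\infty,A)^\vee$ (this is the argument of \cite[Thm.~7.15]{ACR} the paper cites). Your closing remark that one must check $\kapinftys$ ``really maps to a class in $\Sel_{\rm unb}$'' is therefore checking the wrong compatibility.

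Second, the non-torsionness of $\kapinftys$ does not follow from $L_p({\rm ad}^0(g_K)\otimes\psi)\neq 0$ alone. By Corollary~\ref{cor:ERL}, $\mathfrak{Log}({\rm res}_{\overline{\fp}}(\kapinftys))^2$ equals the \emph{product} $L_p({\rm ad}^0(g_K)\otimes\psi)\cdot\mathscr{L}_{\fp}^{\rm Katz}(\psi)^{-,\iota}$, so one must also know the Katz factor is nonzero. This is exactly where hypothesis (e) enters: $\varepsilon(\theta_\psi)=+1$ implies $\mathscr{L}_{\fp}^{\rm Katz}(\psi)^{-,\iota}\neq 0$ by Greenberg's nonvanishing results \cite{greenberg-BDP}, and only then does the factorization of Theorem~\ref{thm:factor} give $L_p({\rm ad}(g_K)\otimes\psi)\neq 0$ and hence the non-torsionness of $\kapinftys$. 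Your proposal invokes (e) only as a vague ``global sign condition'' and never supplies this step.
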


%\begin{remark}
Note that the presence of $\mathscr{L}_{\fp}^{\rm Katz}(\psi)^{-,\iota}$ in the divisibility of Theorem~D is analogous to the appearance of the Kubota--Leopoldt $p$-adic $L$-function in the divisibility towards the Iwasawa main conjecture for the Galois representation attached to the symmetric square of a modular form in \cite[Thm.~B]{LZ}.

In fact, the present work originated from an attempt to develop anticyclotomic analogues of the results in [\emph{op.\,cit.}]. In particular, the idea of modifying the diagonal Euler system classes of \cite{ACR} to obtain the correct norm relations (see $\S\ref{subsec:modified}$) was adopted from their work.

%\subsection{Relation to prior work}

%\begin{remark}
%As it will be clear to the reader, this paper owes to the work of Loeffler--Zerbes \cite{LZ} on the Iwasawa theory for the symmetric square of a modular form. In fact, the present work originated from an attempt to develop anticyclotomic analogues of the results contained in \emph{op.\,cit.}. In particular, the idea of modifying the diagonal cycle classes of \cite{ACR} to obtain the correct norm relations (see $\S\ref{subsec:modified}$) was adopted from their work.
%\end{remark}

\subsection{Outline of the paper}

We begin by introducing in $\S\ref{sec:Selmer}$ our set-up and Galois representation of interest, and various Selmer groups associated with it. In $\S\ref{section:bottomclass}$ we describe in detail the construction of the diagonal cycle class giving rise to the bottom class of our Euler system, and study its behaviour according to a certain sign (given by $\varepsilon(\theta_\psi)$ in the notations of Theorem~D). The results of this section,
which are developed in a slightly more general setting than the rest of the paper, are unnecessary for the proof of our main results, but they are included here for completeness (in particular,  Proposition~\ref{prop:sign-behaviour} might be of independent interest).
%; also, they illustrate some of the special features of the anticyclotomic setting.
In $\S\ref{sec:Lp}$ we introduce the different $p$-adic $L$-functions that appear in our picture, including an analogue of the Hida--Schmidt $p$-adic $L$-function deduced from the work of Eischen\,\emph{et.\,al.} on $p$-adic $L$-functions for unitary groups, and prove the aforementioned analogue of Dasgupta's factorization. Finally, in $\S\ref{sec:ES}$ we give the construction of our Euler system by suitably modifying the diagonal cycle Euler system classes constructed in our previous work \cite{ACR}, and in $\S\ref{sec:verify}$ and $\S\ref{sec:applications}$ we apply this to deduce the arithmetic applications highlighted in the Introduction.

%We begin by introducing the general set-up about Galois representations and Selmer groups, which is used to construct the bottom class of an Euler system. Then, we move to the discussion of $p$-adic $L$-functions, which is based on different works around unitary groups available in the literature. We continue by constructing an Euler system for the symmetric square of a modular form, by defining a suitable modification of our previous constructions. Finally, we discuss the applications to the Bloch--Kato conjecutre and the Iwasawa main conjecture we have underlined in the Introduction.

\subsection{Acknowledgements}
It is a pleasure to thank David Loeffler and  Chris Skinner for their very valuable advice in connection with this work. We are also grateful to Ellen Eischen and Xin Wan for correspondence regarding the subject of this note, and to Shilin Lai and Sam Mundy for several helpful conversations. Finally, we thank the anonymous referees for a careful reading of the text, whose comments notably contributed to improve the exposition of the article.

During the preparation of this paper, F.C. was partially supported by the NSF grant DMS-1946136 and DMS-2101458; O.R. was supported by the Royal Society Newton International Fellowship NIF\textbackslash R1\textbackslash 202208.

\section{Galois representations and Selmer groups}\label{sec:Selmer}

In this section we introduce our Galois representations of interest and the Selmer groups associated with them that we shall be studying.

\subsection{Galois representations}\label{subsec:Galrep}

Let $g=\sum_{n=1}^\infty a_n(g)q^n\in S_l(N_g,\chi_g)$ be an ordinary newform of weight $l\geq 2$, level $N_g$, and nebentypus $\chi_g$. Let $p>2$ be a prime and let $E=L_\mathfrak{P}$ be a finite extension of $\bb{Q}_p$ with ring of integers $\mathcal{O}$ arising as the completion of the Hecke field $L$ of $g$ at a prime $\mathfrak{P}$ of $L$ above $p$. By work of Eichler--Shimura and Deligne, there is a two-dimensional representation
$$
\rho_g\,:\,G_\bb{Q}\longrightarrow \GL_E(V_g)\simeq\GL_2(E)
$$
unramified outside $pN_g$ and characterized by the property
$$
{\rm trace}\,\rho_g(\Fr_{q})=a_{q}(g)
$$
for all primes $q\nmid pN_g$, where $\Fr_{q}$ denotes an arithmetic Frobenius element at $q$. %(Note that this is in fact the dual of the $p$-adic representation constructed by Deligne.)
Let $Y_1(N_g)$ be the open modular curve over $\bb{Q}$ parameterizing pairs $(A,P)$ consisting of an elliptic curve $A$ and a point $P\in A$ of order $N_g$. Let $\mathscr{L}_{l-2}$ is the sheaf introduced in \cite[\S2.3]{BSV}. As in \cite{ACR}, we shall work with the geometric realization of $V_g$ arising as the maximal quotient of
\[
H^1_{\et}(Y_{1}(N_g)_{\overline{\bb{Q}}},\mathscr{L}_{l-2}(1))\otimes_{\Z_p}E
\]
on which the dual Hecke operators $T_q'$ and $\langle d\rangle'$ act as multiplication by $a_q(g)$ and $\chi_g(d)$ for all primes $q\nmid N_g$ and all $d\in(\Z/N_g\Z)^\times$. We also let $T_g\subset V_g$ be the $\mathcal{O}$-lattice defined by the natural image of
\begin{align*}
H^1_{\et}(Y_{1}(N_g)_{\overline{\bb{Q}}}, \mathscr{L}_{l-2}(1))\otimes_{\Z_p}\cl{O}
\end{align*}
under the quotient map $H^1_{\et}(Y_{1}(N_g)_{\overline{\bb{Q}}},\mathscr{L}_{l-2}(1))\otimes_{\Z_p}E\twoheadrightarrow V_g$.

Throughout the following, we shall assume that $g$ is not of CM-type.

\subsection{The adjoint representation}\label{subsec:adjoint}

%Let $V = \Ad^0 g$ stand for the Galois representations attached to the symmetric square. As it is recalled in \cite[\S3.2]{LZ}, it is endowed with a 3-step filtration \[ V = \mathscr{F}^0 V \supset \mathscr{F}^1 V \supset \mathscr{F}^2 V \supset \mathscr{F}^3 V = 0. \]

%This induces an obvious three-step filtration
%\[
%0\subset\mathscr{F}^3\mathbb V_{f \Ad^0 g}^{\dag} \subset\mathscr{F}^2\mathbb V_{f \Ad^0 g}^\dag\subset\mathscr{F}^1\mathbb V_{f \Ad^0 g}^\dag \subset \mathbb V_{f \Ad^0 g}^\dag
%\]
%by $G_{\mathbb Q_p}$-stable submodules of ranks 1, 3, and 5, respectively, given by
%\begin{equation}
%\begin{split}\label{eq:+}
%\mathscr{F}^1\mathbb V_{f \Ad^0 g}^\dag &= (\mathbb V_{f} \hat\otimes_{\mathcal{O}} \mathscr F^1 V + \mathbb V_f^+ \hat\otimes_{\mathcal {O}} \mathscr F^0 V)(\Xi_{fgg}),\\
%\mathscr{F}^2\mathbb V_{f \Ad^0 g}^\dag &= (\mathbb V_f \hat\otimes_{\mathcal{O}} \mathscr F^2 V + \mathbb V_f^+ \hat\otimes_{\mathcal {O}} \mathscr F^1 V)(\Xi_{fgg}),\\
%\mathscr{F}^3\mathbb V_{f \Ad^0 g}^{\dag}&=\mathbb V_{f}^+\hat\otimes_{\mathcal{O}} \mathscr F^2 V (\Xi_{fgg}).
%\end{split}
%\end{equation}

%Here, $\Xi_{fgg}$ is the twist introduced in \cite{ACR}. Note that this discussion also applies to the setting where $f$ and $g$ are allowed to vary over a Hida family. We now suppose that $f$ is a CM form.

Let $K$ be an imaginary quadratic field of discriminant $-D_K<0$. Let $\psi$ be a Hecke character of $K$ of infinity type $(1-k,0)$ for some even integer $k\geq 2$ and central character equal to $\varepsilon_K$, the quadratic character attached to $K/\Q$ (thus the associated theta series $\theta_\psi$ has trivial nebentypus). We assume that $\psi$ has conductor $\mathfrak{c}\subset\cO_K$ prime to $p$ and, upon enlarging $\cO$ if necessary, that its $p$-adic avatar $\psi_{\frk{P}}$ takes values in $\cO$.

%Let $\mathfrak{c}$ be an ideal of $\cO_K$ coprime to $p$, and fix a Hecke character $\psi_0$ of infinity type $(-1,0)$,  conductor $\mathfrak{c}$ or $\frk{cp}$, and central character $\varepsilon_K\omega^{2n}$ for some interger $n$ (i.e. the conductor is $\frk{c}$ if $p-1\mid r_1$ and $\frk{cp}$ otherwise). Let $\mathcal{O}$ be the ring of integers of a finite extension of $\Q_p$ containing the values of $\psi_0$.

%Let $\lambda$ be the unique (since $p\nmid h_K$) Hecke character of infinity type $(-1,0)$ and conductor $\fp$ whose $p$-adic avatar factors through $\Gamma_\fp$. Let $\psi=\psi_0\lambda^{2n}\mathbf{N}^n$, which is a Hecke character of infinity type $(-1-n,n)$ and central character $\varepsilon_K$. Let $\psi_\frk{P}$ be its $p$-adic avatar.

%In this paper we are interested in the arithmetic of the following $p$-adic representation.

\begin{definition}\label{def:V}
Let $V$ be the $E$-valued $G_K$-representation given by
\[
V:=\ad^0(V_g)(\psi_{\frk{P}}^{-1})(1-k/2),
\]
where $\ad^0(V_g)\subset{\rm End}_E(V_g)$ denotes the adjoint representation on the trace-zero endomorphisms of $V_g$.
\end{definition}

Let $g^*=g\otimes\chi_g^{-1}$ be the twist of $g$ by the inverse of its nebentypus. We shall study the arithmetic of $V$ by exploiting the decomposition
\begin{equation}\label{eq:dec-V}
\Vrep:=V_g\otimes V_{g^\ast}(\psi_{\frk{P}}^{-1})(1-c)\simeq V\oplus V',
\end{equation}
where $c=(k+2l-2)/2$ and $V'=E(\psi_\mathfrak{P}^{-1})(1-k/2)$.

\subsection{Selmer groups}\label{subsection:Selmer}

From now on, we assume that $p$ is a prime of good ordinary reduction for $g$ such that
\begin{equation}\label{eq:split}
\textrm{$(p)=\fp\overline{\fp}$ splits in $K$,}
\end{equation}
with $\fp$ the prime of $K$ above $p$ determined by our fixed embedding $\iota_p:\overline{\Q}\hookrightarrow\overline{\Q}_p$.

%\begin{hypotheses}
%\begin{enumerate}
%\item $(p)=\fp\overline{\fp}$ splits in $K$,
%\item $p\nmid h_K$, the class number of $K$,
%\item the residual representation $\bar{\rho}_g$ is absolutely irreducible and $p$-distinguished.
%\end{enumerate}
%\end{hypotheses}

%\begin{remark}
%The residual representation $\bar{\rho}_{\theta_\psi}$ is absolutely irreducible and $p$-distinguished by the definition of $\psi$ (see \cite[Rmk.~5.1.3]{LLZ}).
%\end{remark}

%\begin{remark}
%Similarly as in \cite{ACR}, the hypothesis $p\nmid h_K$ is inessential and is introduced for simplicity.
%\end{remark}

By $p$-ordinarity, the Galois representation $V_g$ is equipped with a $G_{\Q_p}$-stable filtration
\[
0\longrightarrow V_g^+\longrightarrow V_g\longrightarrow V_g^-\longrightarrow 0
\]
with $V_g^{\pm}$ one-dimensional and the $G_{\Q_p}$-action on $V_g^-$ given by the unramified character sending an arithmetic Frobenius
${\rm Fr}_p$ to $\alpha_g$, the $p$-adic unit root of $x^2-a_p(g)x+\chi_g(p)p^{l-1}$. Of course, twisting these by $\chi_g^{-1}$ we obtain $V_{g^*}^\pm=V_g^\pm\otimes\chi_g^{-1}$.

Let $F/K$ be any finite extension and, for $v\mid p$ any prime of $F$ above $p$, define
%$\mathscr{F}_v^+(\Vrep)\subset\Vrep$ by
\begin{equation}\label{eq:local-bal}
\mathscr{F}_v^{\rm bal}(\Vrep):=\begin{cases}
(V_g^+\otimes V_{g^*}+V_g\otimes V_{g^*}^+)(\psi_{\frk{P}}^{-1})(1-c)&\textrm{if $v\mid\fp$,}\\[0.3em]
V_g^+\otimes V_{g^*}^+(\psi_{\frk{P}}^{-1})(1-c)&\textrm{if $v\mid\overline{\fp}$,}
\end{cases}
\end{equation}
and
\begin{equation}\label{eq:local-unb}
\mathscr{F}_v^{\rm unb}(\Vrep):=\begin{cases}
\Vrep&\textrm{if $v\mid\fp$,}\\[0.3em]
\{0\}&\textrm{if $v\mid\overline{\fp}$,}
\end{cases}
\end{equation}
and, for $\any\in\{{\rm bal},{\rm unb}\}$, put
$\mathscr{F}_v^{\any}(V)=\mathscr{F}_v^{\any}(\Vrep)\cap V$ and $\mathscr{F}_v^{\any}(V')=\mathscr{F}_v^{\any}(\Vrep)\cap V'$.

Fix $\Sigma$ any finite set of places of $K$ containing $\infty$ and the primes dividing $pN_gN_\psi$. With a slight abuse of notation, for any finite extension of $F/K$ we also denote by $\Sigma$ the set of places of $F$ lying over the places in $\Sigma$, and denote by $G_{F,\Sigma}$ the Galois group of the maximal extension of $F$ unramified outside $\Sigma$. Further, for any non-archimedean field $F_v$, we write $F_v^{\rm nr}$ for the maximal unramified extension of $F_v$.

\begin{definition}\label{def:Sel-ad}
Let $F/K$ be a finite extension, and for $M\in\{\Vrep,V,V'\}$ and $\any\in\{{\rm bal},{\rm unb}\}$ define the Selmer group ${\rm Sel}_\any(F,M)$ by
\[
\Sel_\any(F,M) = \ker \bigg(
H^1(G_{F,\Sigma},M) \longrightarrow
\prod_{v\mid p}\frac{H^1(F_v,M)}{H^1_{\any}(F_v,M)}\times\prod_{v \in \Sigma,v\nmid p\infty}H^1(F_v^{\rm nr},M)\bigg),
\]
where
\[
H_{\any}^1(F_v,M)={\rm im}(H^1(F_v,\mathscr{F}_v^\any(M))\longrightarrow H^1(F_v,M)).
\]
We call ${\rm Sel}_{\rm bal}(F,M)$ (resp. ${\rm Sel}_{\rm unb}(F,M)$) the \emph{balanced} (resp. \emph{unbalanced}) Selmer group.
\end{definition}

\begin{remark}\label{rem:shapiro}
Let $f=\theta_\psi$ be the weight $k$ eigenform associated with $\psi$, and denote by $V_{fgg^*}:=V_{f}\otimes V_g\otimes V_{g^*}(1-c)$ the Kummer self-dual twist of the Galois representation attached to $(f,g,g^*)$. Since $V_f = \Ind^{\mathbb Q}_K \psi$, one can easily check that the isomorphism given by Shapiro's lemma
\[
H^1(\Q,V_{fgg^*})\simeq H^1(K,\Vrep)
\]
identifies the Selmer groups ${\rm Sel}_{\rm bal}(\Q,V_{fgg^*})$ and ${\rm Sel}_{f}(\Q,V_{fgg^*})$ considered in \cite[Def.~7.5]{ACR} with the above ${\rm Sel}_{\rm bal}(K,\Vrep)$ and ${\rm Sel}_{\rm unb}(K,\Vrep)$, respectively.
\end{remark}

Put $\Trep=T_g\otimes T_{g^*}(\psi_\mathfrak{P}^{-1})(1-c)$. Then the decomposition (\ref{eq:dec-V}) induces a decomposition
\[
\Trep\simeq T\oplus T',
\]
where $T$ and $T'$ are lattices in $V$ and $V'$, respectively. We also set
\[
\Arep=\Vrep/\Trep,\quad A=V/T,\quad A'=V'/T'.
\]
Then, for $\any\in\{{\rm bal},{\rm unb}\}$ and $M\in\{\Trep,T,T',\Arep,A,A'\}$, we define the local conditions $H^1_\any(F_v,M)$ from the local conditions above by propagation, and use them to define the Selmer groups ${\rm Sel}_\any(K,M)$ using the same recipe as in Definition~\ref{def:Sel-ad}. Finally, for $M_1\in\{\Trep,T,T'\}$ and $M_2\in\{\Arep,A,A'\}$, we put
\[
{\rm Sel}_?(K_\infty,M_1):=\varprojlim_n{\rm Sel}_?(K_n,M_1),\quad
{\rm Sel}_?(K_\infty,M_2):=\varinjlim_n{\rm Sel}_?(K_n,M_2),
\]
where the limits are with respect to corestriction and restriction, respectively.

To help orient the reader, we note the following simple relation between the different Selmer groups introduced above.

\begin{proposition}
The decomposition $\Vrep=V\oplus V'$ induces isomorphisms
\begin{align*}
{\rm Sel}_{\rm bal}(K_\infty,\Trep)&\simeq{\rm Sel}_{\rm bal}(K_\infty,T)\oplus{\rm Sel}(K_\infty,T'),\\
{\rm Sel}_{\rm unb}(K_\infty,\Trep)&\simeq{\rm Sel}_{\rm unb}(K_\infty,T)\oplus{\rm Sel}(K_\infty,T'),
\end{align*}
where ${\rm Sel}(K_\infty,T')$ is the Bloch--Kato Selmer group for $T'$.
\end{proposition}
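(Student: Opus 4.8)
The plan is to check the claimed decomposition of Selmer groups directly from the definitions, exploiting that the decomposition $\Vrep = V \oplus V'$ is $G_K$-equivariant and compatible with the integral lattices $\Trep = T \oplus T'$ as well as with all the local conditions at every place in $\Sigma$. First I would observe that for any finite extension $F/K$ the global cohomology functor $H^1(G_{F,\Sigma},-)$ is additive, so $H^1(G_{F,\Sigma},\Trep) \simeq H^1(G_{F,\Sigma},T) \oplus H^1(G_{F,\Sigma},T')$, and similarly for all the restriction maps to local cohomology at places $v \in \Sigma$. The content of the proposition is therefore that, under this splitting, the local conditions defining ${\rm Sel}_{\rm bal}$ and ${\rm Sel}_{\rm unb}$ for $\Trep$ decompose as the direct sum of the corresponding local conditions for $T$ and $T'$.

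The key step is the local analysis at primes $v \mid p$. By definition $\mathscr{F}_v^\any(V) = \mathscr{F}_v^\any(\Vrep) \cap V$ and $\mathscr{F}_v^\any(V') = \mathscr{F}_v^\any(\Vrep) \cap V'$, so I would check that $\mathscr{F}_v^\any(\Vrep) = \mathscr{F}_v^\any(V) \oplus \mathscr{F}_v^\any(V')$ as $G_{F_v}$-submodules, i.e.\ that the subspace $\mathscr{F}_v^\any(\Vrep)$ is ``split'' by the decomposition $\Vrep = V \oplus V'$. For the unbalanced condition this is immediate since $\mathscr{F}_v^{\rm unb}(\Vrep)$ is either all of $\Vrep$ or $\{0\}$. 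For the balanced condition one uses that $V' = E(\psi_\mathfrak{P}^{-1})(1-k/2)$ corresponds to the scalar (identity) part of ${\rm End}_E(V_g) = \ad^0(V_g) \oplus E$, so that $V_g^+ \otimes V_{g^*}^+$ and $V_g^+ \otimes V_{g^*} + V_g \otimes V_{g^*}^+$ each meet the scalar line in a predictable way; in fact $V_g^+ \otimes V_{g^*}^+ = V_g^+ \otimes (V_g^+)^\vee$ under the natural identification $V_{g^*} \simeq V_g^\vee(\text{twist})$, and tracing through the filtrations shows the balanced submodule is the direct sum of its intersections with $V$ and $V'$. One then needs to verify that passing to lattices ($T$, $T'$) and to $A$, $A'$ by propagation preserves this direct-sum structure, which is formal since propagation commutes with direct sums. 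At the bad primes $v \nmid p\infty$ the local condition is the unramified one $H^1(F_v^{\rm nr}, -)$, which is again additive in the coefficient module. Finally, taking $\varprojlim_n$ over corestriction maps for the $K_n$ preserves the direct-sum decomposition since inverse limits commute with finite direct sums, giving the stated isomorphisms over $K_\infty$, and the identification ${\rm Sel}(K_\infty,T')$ with the usual (unramified-outside-$p$, with the $\fp$/$\overline{\fp}$ condition) Selmer group for the character $\psi_\mathfrak{P}^{-1}$ twisted appropriately follows by unwinding the definition of $\mathscr{F}_v^\any(V')$, noting that both $\mathscr{F}_v^{\rm bal}(V')$ and $\mathscr{F}_v^{\rm unb}(V')$ reduce to the same ``ordinary'' local condition for the one-dimensional representation $V'$.

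The main obstacle I anticipate is the bookkeeping at $v \mid \fp$ for the balanced condition: one must be careful that $\mathscr{F}_{v\mid\fp}^{\rm bal}(\Vrep) = (V_g^+\otimes V_{g^*} + V_g\otimes V_{g^*}^+)(\psi_\mathfrak{P}^{-1})(1-c)$ is genuinely a direct summand compatible with $\Vrep = V \oplus V'$ and not merely contains $\mathscr{F}_v^{\rm bal}(V) + \mathscr{F}_v^{\rm bal}(V')$ with proper inclusion. This comes down to a dimension count: $\mathscr{F}_{v\mid\fp}^{\rm bal}(\Vrep)$ is $3$-dimensional (it is the sum of two $2$-dimensional subspaces of the $4$-dimensional $\Vrep$ meeting in the $1$-dimensional $V_g^+ \otimes V_{g^*}^+$), while $\mathscr{F}_{v\mid\fp}^{\rm bal}(V)$ should be $2$-dimensional and $\mathscr{F}_{v\mid\fp}^{\rm bal}(V')$ should be $1$-dimensional (all of $V'$), and $2+1 = 3$; one checks the inclusion $\mathscr{F}_v^{\rm bal}(V) \oplus \mathscr{F}_v^{\rm bal}(V') \subseteq \mathscr{F}_v^{\rm bal}(\Vrep)$ is an equality by this count, and similarly $1+1=2$ at $v\mid\overline{\fp}$. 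Once these dimension counts are confirmed (and their integral refinements for the lattices), the rest is a routine diagram chase through Definition~\ref{def:Sel-ad}.
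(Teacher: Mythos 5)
Your overall strategy is the same as the paper's: reduce everything to checking that the local conditions at $v\mid p$ split along $\Vrep=V\oplus V'$ and that the induced conditions on $V'$ recover the standard Selmer structure (all of $H^1$ at $\fp$, zero at $\overline{\fp}$). The paper phrases this more compactly by observing that $\mathscr{F}_v^{\rm bal}(\Vrep)$ contains $V'$ for $v\mid\fp$ and meets $V'$ trivially for $v\mid\overline{\fp}$, and your $\fp$-side count ($3=2+1$) is correct and in fact implies the splitting there, since any subspace of $V\oplus V'$ containing $V'$ is automatically the direct sum of $V'$ with its intersection with $V$.

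However, your count at $v\mid\overline{\fp}$ is wrong as stated. There $\mathscr{F}_{\overline{\fp}}^{\rm bal}(\Vrep)=V_g^+\otimes V_{g^*}^+(\psi_{\frk{P}}^{-1})(1-c)$ is \emph{one}-dimensional, so ``$1+1=2$'' cannot be the right bookkeeping: if $\mathscr{F}_{\overline{\fp}}^{\rm bal}(V)$ and $\mathscr{F}_{\overline{\fp}}^{\rm bal}(V')$ were each one-dimensional their direct sum could not sit inside a line. The correct statement is $1+0=1$: under the identification $V_{g^*}\simeq V_g^\vee(\mathrm{twist})$ one has $V_{g^*}^+\simeq (V_g^-)^\vee$ (the annihilator of $V_g^+$), \emph{not} $(V_g^+)^\vee$ as you wrote, so $V_g^+\otimes V_{g^*}^+$ is the line of nilpotent endomorphisms sending $V_g$ into $V_g^+$ and killing $V_g^+$; these have trace zero, hence the whole line lies in $V=\ad^0(V_g)(\cdots)$ and meets $V'$ trivially. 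This is not a cosmetic point: it is precisely the vanishing $\mathscr{F}_{\overline{\fp}}^{\rm bal}(V')=0$ that makes the balanced condition on $V'$ impose triviality at $\overline{\fp}$ and hence agree with the usual Selmer group ${\rm Sel}(F,V')$ (and with the unbalanced condition), which is the whole content of the proposition. Your later remark that the balanced and unbalanced conditions on $V'$ coincide is the right conclusion, but it contradicts your ``$1+1=2$'' count; with the corrected identification the rest of your argument (propagation to lattices, additivity of unramified conditions, passage to the limit over $K_n$) goes through as you describe.
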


\begin{proof}
It suffices to show that for any finite extension $F/K$ we have
\[
{\rm Sel}_{\rm bal}(F,V')\simeq{\rm Sel}_{\rm unb}(F,V')\simeq{\rm Sel}(F,V'),
\]
where ${\rm Sel}(F,V')$ is the Bloch--Kato Selmer group of $V'=E(\psi_{\mathfrak{P}}^{-1})$, which is given by
\[
{\rm Sel}(F,V')=\ker\biggl(H^1(G_{F,\Sigma},V')\rightarrow \prod_{v\vert\bar{\fp}}H^1(F_{v},V')\times\prod_{v\in\Sigma, v\nmid p}H^1(F_v^{\rm nr},V')\biggr)
\]
(see \cite[\S{1.1}]{AH-ord} or \cite[\S{1.2}]{arnold}). For ${\rm Sel}_{\rm unb}(F,V')$ this is clear from (\ref{eq:local-unb}); for ${\rm Sel}_{\rm bal}(F,V')$ it follows by noting that the subspace  $\mathscr{F}_v^{\rm bal}(\Vrep)\subset\Vrep$ in (\ref{eq:local-bal}) contains $V'$ for $v\mid\fp$ and intersects trivially with it for $v\mid\bar{\fp}$.
\end{proof}

%\end{comment}

\section{Construction of the bottom class}
\label{section:bottomclass}

In this section, we recall the construction of a $\Lambda$-adic cohomology class associated with the triple product of three modular forms as explained in \cite{BSV}. We follow the exposition in [\emph{op.\,cit.}] with slight modifications and specializing the discussion to the case of interest in this paper. At the end of this section we analyze the behaviour of this cohomology class depending on the sign of one of the modular forms.

This section is independent of the rest of the paper, and the reader solely interested in the results stated in the Introduction can proceed to Section~\ref{sec:Lp}.

%Now we turn our attention to the bottom class of the system. To that end, let us briefly recall the construction of the class $\kapinfty:=\kapinftyone$. The construction of this class follows essentially from \cite[\S8.1]{BSV}. We summarize it here for convenience, with slight modifications.

Let $f$ and $g$ be newforms of weight $k=r_1+2$ and $l=r_2+2$, level $N_f$ and $N_g$ and character $\chi_f=1$  and  $\chi_g$, respectively. We assume that $p\nmid 2 N_f N_g$ and that both $f$ and $g$ are ordinary at $p$. We denote by $h=g^\ast$ the newform obtained by conjugating the Fourier coefficients of $g$. Let $L$ be a finite extension of $\bb{Q}$ containing the Fourier coefficients of $f$ and $g$ and let $E=L_\mathfrak{P}$ be its completion at a prime $\mathfrak{P}$ above $p$, with ring of integers $\mathcal{O}$. Define $N=\lcm(N_f,N_g)$.

Consider the Iwasawa algebra $\Lambda=\bb{Z}_p\dBr{1+p\bb{Z}_p}$. There exist finite flat $\Lambda$-modules $\Lambda_\hf$ and $\Lambda_\hg$  and primitive Hida families $\hf\in \Lambda_\hf\dBr{q}$ and $\hg\in\Lambda_\hg\dBr{q}$ passing through the ordinary $p$-stabilizations $f_\alpha$ and $g_\alpha$ of $f$ and $g$, respectively. Let $\hh=\hg^\ast$ be the Hida family $\hg\otimes\chi_g^{-1}$, which passes through the ordinary $p$-stabilization $g_\alpha^\ast$ of $g^\ast$. Our conventions for Hida families are those described in \cite[\S5.1]{ACR}.

Let $\mu_{p-1}$ denote the group of roots of unity in $\bb{Z}_p^\times$ and consider the decomposition $\mathbb Z_p^{\times} \cong \mu_{p-1}\times (1+p \mathbb Z_p)$. Let $\omega:\bb{Z}_p^\times\rightarrow \mu_{p-1}\subset \bb{Z}_p^\times$ be the map defined by projection onto the first factor, according to the previous decomposition. Also, for an element $z\in \bb{Z}_p^\times$, we denote by $\langle z \rangle$ its projection onto the second factor (alternatively, $\langle z \rangle = z/\omega(z)$).

Let $\text{Cont}(\bb{Z}_p,\Lambda)$ be the $\Lambda$-module of continuous functions on $\bb{Z}_p$ with values in $\Lambda$. To make notation less cumbersome, we denote by $[z]$ the group-like element $[\langle z\rangle]$ in $\Lambda$. For each integer $i$, let $\kappa_i:\bb{Z}_p^\times\rightarrow \Lambda^\times$ be the character defined by $z\mapsto \omega^i(z)[z]$. We also define the sets $\mathsf{T}=\bb{Z}_p^\times\times\bb{Z}_p$ and $\mathsf{T}'=p\bb{Z}_p\times\bb{Z}_p^\times$. Then, we can define the $\Lambda$-modules
\begin{gather}
\cl{A}_{i}=\left\{ f:\mathsf{T}\rightarrow \Lambda\; \vert\; f(1,z)\in \text{Cont}(\bb{Z}_p,\Lambda) \text{ and } f(a\cdot t)=\kappa_i(a)\cdot f(t) \text{ for all } a\in\bb{Z}_p^\times,\, t\in \mathsf{T}\right\},\nonumber\\
\cl{A}_{i}'=\big\{ f:\mathsf{T}'\rightarrow \Lambda \; \vert\; f(pz,1)\in \text{Cont}(\bb{Z}_p,\Lambda) \text{ and } f(a\cdot t)=\kappa_i(a)\cdot f(t)
\text{ for all } a\in\bb{Z}_p^\times,\, \gamma\in \mathsf{T}'\big\},\nonumber\\
\cl{D}_{i}=\Hom_{\text{cont},\Lambda}(\cl{A}_{i},\Lambda),\quad
\cl{D}_{i}'=\Hom_{\text{cont},\Lambda}(\cl{A}_{i}',\Lambda).\nonumber
\end{gather}

We define in addition characters $\kappa_{f}^\ast, \kappa_{g}^\ast, \kappa_{g^*}^\ast, \kappa^\ast:\bb{Z}_p^\times\rightarrow \Lambda\hat{\otimes}\Lambda\hat{\otimes}\Lambda$ by
\begin{align*}
\kappa_{f}^\ast(z)&=\omega^{r_2-r_1/2}(z)[z]^{-1/2}\otimes[z]^{1/2}\otimes[z]^{1/2} \\
\kappa_{g}^\ast(z)&=\omega^{r_1/2}(z)[z]^{1/2}\otimes[z]^{-1/2}\otimes[z]^{1/2} \\
\kappa_{h}^\ast(z)&=\omega^{r_1/2}(z)[z]^{1/2}\otimes[z]^{1/2}\otimes[z]^{-1/2} \\
\kappa^\ast(z)&=\omega^{r_1/2+r_2}(z)[z]^{1/2}\otimes[z]^{1/2}\otimes[z]^{1/2}.
\end{align*}
We denote by $\bm{\kappa}^\ast$ the character of the Galois group $G_{\bb{Q}}$ defined by $\bm{\kappa}^\ast=\kappa^\ast\circ\epsilon_{\cyc}$, and similarly for the other characters introduced above.

Let $Y=Y_1(N,p)$ denote the same modular curve as in \cite[\S8.1]{BSV} and let $\Gamma=\Gamma_1(N,p)$ be the corresponding modular group. The function
\[
\mathbf{Det}:\mathsf{T}'\times \mathsf{T}\times \mathsf{T}\longrightarrow \Lambda\hat{\otimes}\Lambda\hat{\otimes}\Lambda,
\]
defined as in [\emph{loc.\,cit.}], yields an element in the group
\[
H^0_{\et}(Y,\bm{\cl{A}}_{r_1}'\otimes \bm{\cl{A}}_{r_2}\otimes\bm{\cl{A}}_{r_2}(-\kappa^\ast)).
\]
Here $\bm{\cl{A}}_{r_1}'$ and $\bm{\cl{A}}_{r_2}$ denote the \'etale sheaves associated with $\cl{A}_{r_1}'$ and $\cl{A}_{r_2}$, respectively, as explained in \cite[\S4.2]{BSV}. Then, with essentially the same notations as in [\emph{op.\,cit.}, \S8.1], we define the class
\[
\kappa^{(1)}=\frac{1}{a_p(\hf)}\mathtt{s}_{\hf\hg\hh}\circ(e_{\text{ord}}\otimes e_{\text{ord}}\otimes e_{\text{ord}})\circ(w_p\otimes 1\otimes 1)\circ\mathtt{K}\circ\mathtt{HS}\circ d_\ast(\mathbf{Det})
\]
inside the group
\[
H^1\bigl(\bb{Q},H^1(\Gamma,\cl{D}_{r_1}')^{\text{ord}}\hat{\otimes} H^1(\Gamma,\cl{D}_{r_2}')^{\text{ord}}\hat{\otimes} H^1(\Gamma,\cl{D}_{r_2}')^{\text{ord}}(2-\bm{\kappa}^\ast)\bigr).
\]

For a $\bb{Z}_p$-algebra $A$, let $L_{r_2}(A)$ be defined as in [\emph{op.\,cit.}, p.~17]. We will sometimes denote $L_{r_2}(\bb{Z}_p)$ simply by $L_{r_2}$. Let $\kappa_f^{1/2}:\bb{Z}_p^\times\rightarrow \Lambda_\hf^\times$ denote the map $z\mapsto \omega^{r_1/2}(z)[z]^{1/2}$ and let $\bm{\kappa}_f^{1/2}=\kappa_f^{1/2}\circ \epsilon_{\cyc}$. According to [\emph{op.\,cit.}, eq.~(90)], the $\Lambda$-module $H^1(\Gamma,\cl{D}_{r_2}')^{\text{ord}}$ specializes to $H^1(\Gamma,L_{r_2})^{\text{ord}}$ at weight $l=r_2+2$. Therefore, the class $\kappa^{(1)}$ yields a class
\[
\kappa^{(2)}\in H^1\bigl(\bb{Q},H^1(\Gamma,\cl{D}_{r_1}')^{\text{ord}}\otimes H^1(\Gamma,L_{r_2})^{\text{ord}}\otimes H^1(\Gamma,L_{r_2})^{\text{ord}}(2-r_2-\bm{\kappa}_f^{1/2})\bigr).
\]

We define Hecke operators $T_q'$, $[d]'_N$ acting on group cohomology as in \cite[\S5.3]{ACR}. Let $\bb{V}_\hf(N)$ be the maximal quotient of $H^1(\Gamma,\cl{D}_{r_1}')^{\text{ord}}(1)\otimes_{\Lambda}\Lambda_\hf$ on which the Hecke operators $T_q'$ for primes $q\nmid N$ act as multiplication by $a_q(\hf)$ and the diamond operators $[d]_N'$ act as multiplication by $\chi_f(d)$ (actually, the character $\chi_f$ is trivial in our case). We define $T_g(N)$ and $T_{g^\ast}(N)$ in a similar way as quotients of $H^1(\Gamma,L_{r_2}(\cl{O}))^{\text{ord}}(1)$. Also, let $\bb{V}_\hf$ be the maximal quotient of $H^1(\Gamma_1(N_f,p),\cl{D}_{r_1}')^{\text{ord}}(1)\otimes_{\Lambda}\Lambda_\hf$ on which the Hecke operators $T_q'$ act as multiplication by $a_q(\hf)$ and the diamond operators $[d]'_N$ act as multiplication by $\chi_f(d)$ and define $T_g$ and $T_{g^\ast}$ in a similar way as quotients of $H^1(\Gamma_1(N_g,p),L_{r_2}(\cl{O}))^{\text{ord}}(1)$.
%Since, under our assumptions, the $p$-adic Galois representations attached to $f$ and $g$ are residually irreducible, the modules $\bb{V}_\hf$, $\bb{V}_\hg$ and $\bb{V}_{\hg^\ast}$ are free of rank two over $\Lambda_\hf$, $\Lambda_{\hg}$ and $\Lambda_{\hg}$, respectively.

To shorten notation, we define
\begin{align*}
\bb{V}(\hf,g,g^\ast)&=\bb{V}_\hf\otimes T_g\otimes T_{g^\ast}(-1-r_2-\bm{\kappa}_f^{1/2}),\\
\bb{V}(\hf,g,g^\ast)(N)&=\bb{V}_\hf(N)\otimes T_g(N)\otimes T_{g^\ast}(N)(-1-r_2-\bm{\kappa}_f^{1/2}),\\
\bb{V}(\hf,g,g^\ast)_f&=\bb{V}_\hf^-\otimes T_g^+\otimes T_{g^\ast}^+(-1-r_2-\bm{\kappa}_f^{1/2}),\\
\bb{V}(\hf,g,g^\ast)_f(N)&=\bb{V}_\hf^-(N)\otimes T_g^+(N)\otimes T_{g^\ast}^+(N)(-1-r_2-\bm{\kappa}_f^{1/2}).
\end{align*}
We also introduce
\begin{align*}
M(\hf,g,g^\ast)_f&=\bb{V}_\hf^-\hat{\otimes} T_g^+\hat{\otimes} T_{g^\ast}^+(-2-2r_2)\hat{\otimes}\Lambda(\bm{\kappa}^{-1})[1/p],\\
M(\hf,g,g^\ast)_f(N)&=\bb{V}_\hf^-(N)\hat{\otimes} T_g^+(N)\hat{\otimes} T_{g^\ast}^+(N)(-2-2r_2)\hat{\otimes}\Lambda(\bm{\kappa}^{-1})[1/p],
\end{align*}
where $\bm{\kappa}:G_\bb{Q}\rightarrow \Lambda^\times$ is defined by $\bm{\kappa}(\sigma)=\omega^{r_1/2-r_2-1}(\epsilon_{\cyc}(\sigma))[\epsilon_{\cyc}(\sigma)]$.

The class $\kappa^{(2)}$ yields a class
\[
\kappa^{(2)}(\hf,g,g^\ast)\in H^1\left(\bb{Q},\bb{V}(\hf,g,g^\ast)(N)\right).
\]
This is the class defined in \cite[eq.~155]{BSV} specialized to weight $l$ in the second and third factors. It follows from [\emph{op.\,cit.}, Cor.~8.2] that the restriction at $p$ of this class belongs to the group
\[
H^1_{\bal}\left(\bb{Q}_p,\bb{V}(\hf,g,g^\ast)(N)\right).
\]

Let $S_{\Lambda_\hf}^{\ord}(N,\omega^{r_1})$ denote the space of Hida families of tame level $N$, character $\omega^{r_1}$ and with coefficients in $\Lambda_\hf$.
%This space is defined as in \cite[\S5]{BSV}, except that we require the coefficients of the corresponding $q$-expansions to be elements of the integral module $\Lambda_\hf$.
We denote by $S_{\Lambda_\hf}^{\ord}(N,\omega^{r_1})[\hf]$ the subspace of $S_{\Lambda_\hf}^{\ord}(N,\omega^{r_1})$ on which the Hecke operators $U_p$ and $T_\ell$ for $\ell\nmid N$ act with the same eigenvalues as on $\hf$. Let $S_l(\Gamma_1(N,p),\chi_g)[g_\alpha]$ denote the space of modular forms of weight $l$, level $\Gamma_1(N,p)$ and nebentypus $\chi_g$ which are eigenforms for the Hecke operators $U_p$ and $T_\ell$ for $\ell\nmid N$ with the same eigenvalues as $g_\alpha$. We similarly define $S_l(\Gamma_1(N,p),\chi_g^{-1})[g_\alpha^\ast]$. Then, a choice of level-$N$ test vectors $\breve{\hf}$, $\breve{g}$ and $\breve{h}$ for $\hf$, $g_\alpha$ and $g_{\alpha}^\ast$, respectively, is a choice of elements
\[
\breve{\hf}\in S_{\Lambda}^{\ord}(N,\omega^{r_1})[\hf],\quad  \breve{g}\in S_l(\Gamma_1(N,p),\chi_g)[g_\alpha],\quad \breve{h}\in S_l(\Gamma_1(N,p),\chi_g^{-1})[g_\alpha^\ast],
\]
each of which can be written, in terms of their $q$-expansions, as
\begin{gather*}
\breve{\hf}(q)=\sum_{0<d\mid N/N_f} r_d^{\breve{\hf}}\cdot\hf(q^d), \\
\breve{g}(q)=\sum_{0<d\mid N/N_g} r_d^{\breve{g}}\cdot g_\alpha(q^d), \\
\breve{h}(q)=\sum_{0<d\mid N/N_g} r_d^{\breve{h}}\cdot g_\alpha^\ast(q^d),
\end{gather*}
with $r_d^{\breve{\hf}}\in\Lambda_\hf$ and $r_d^{\breve{g}}, r_d^{\breve{h}}\in \cl{O}$. Let
\[
\varpi_{\breve{\hf}}^\ast:S_{\Lambda_\hf}^{\ord}(N_f,\omega^{r_1})\longrightarrow S_{\Lambda_\hf}^{\ord}(N,\omega^{r_1})
\]
denote the map defined by
\[
\Phi(q)\mapsto \sum_{0<d\mid N/N_f} r_d^{\breve{\hf}}\cdot \Phi(q^d).
\]
Similarly, we define
\begin{gather*}
\varpi_{\breve{g}}^\ast:S_l(\Gamma_1(N_g,p),\chi_g)\longrightarrow S_l(\Gamma_1(N,p),\chi_g), \\
\varpi_{\breve{h}}^\ast:S_l(\Gamma_1(N_g,p),\chi_g^{-1})\longrightarrow S_l(\Gamma_1(N,p),\chi_g^{-1}).
\end{gather*}
Therefore, we can write $\breve{\hf}=\varpi_{\breve{\hf}}^\ast(\hf)$,  $\breve{g}=\varpi_{\breve{g}}^\ast(g_\alpha)$ and $\breve{h}=\varpi_{\breve{h}}^\ast(g^\ast_\alpha)$. At the same time, for each $d\mid N/N_f$, the map $v_d:Y_1(N,p)\rightarrow Y_1(N_f,p)$, corresponding to multiplication by $d$ on the complex upper half-plane under the standard complex uniformizations, yields a pushforward map
\[
v_{d\ast}: H^1(\Gamma_1(N,p),\cl{D}_{r_1}')\longrightarrow H^1(\Gamma_1(N_f,p),\cl{D}_{r_1}')
\]
which induces a map
\[
v_{d\ast}:\bb{V}_\hf(N)\longrightarrow \bb{V}_\hf.
\]
Let $\varpi_{\breve{\hf}\ast}=\sum_{0<d\mid N/N_f} r_d^{\breve{\hf}} v_{d\ast}$. Let $\eta_{\breve{\hf}}$ and $\eta_{\hf}$ denote the differentials attached to $\breve{\hf}$ and $\hf$, respectively, in \cite[eq.~(122)]{BSV}. Then, %$\eta_{\breve{\hf}}=\varpi_{\breve{\hf}}^\ast(\eta_{\hf})$ and
\[
\langle x,\eta_{\breve{\hf}}\rangle= \langle \varpi_{\breve{\hf}\ast}(x),\eta_\hf\rangle \quad \text{for all }x\in \bb{V}_\hf^-(N).
\]
%The Eichler--Shimura isomorphism in \cite[eq.~(117)]{BSV}, following from the work of Ohta \cite{Ohta00} and King--Loeffler--Zerbes \cite{KLZ}, gives rise to pairings
%\begin{gather*}
%\langle\,,\,\rangle_{N,\hf}:\bb{V}_\hf(N)\times S_{\Lambda_\hf}^{\ord}(N,\omega^{r_1})[\hf] \longrightarrow \Lambda_\hf, \\
%\langle\,,\,\rangle_{N_f,\hf}:\bb{V}_\hf\times S_{\Lambda_\hf}^{\ord}(N_f,\omega^{r_1})[\hf] \longrightarrow \Lambda_\hf.
%\end{gather*}
\begin{comment}
The maps $\varpi_{\hf}^\ast$ and $\varpi_{\hf\ast}$ satisfy the following ``adjointness'' property:
\[
\langle x,\varpi_{\breve{\hf}}^\ast(y)\rangle_{N,\hf}=\langle \varpi_{\breve{\hf}\ast}(x),y\rangle_{N_f,\hf}.
\]
\end{comment}
Similarly, we can define maps
\[
\varpi_{\breve{g}\ast}, \varpi_{\breve{h}\ast}: H^1(\Gamma_1(N,p),L_{r_2})\longrightarrow H^1(\Gamma(N_g,p),L_{r_2}),
\]
which induce maps
\[
\varpi_{\breve{g}\ast}: T_g(N)\longrightarrow T_g,\quad \varpi_{\breve{h}\ast}: T_{g^\ast}(N)\longrightarrow T_{g^\ast}.
\]
Let $\omega_{\breve{g}}$, $\omega_{g_\alpha}$, $\omega_{\breve{h}}$ and $\omega_{g_\alpha^\ast}$ denote the differentials attached to $\breve{g}$, $g_\alpha$, $\breve{h}$ and $g_\alpha^\ast$, respectively, in \cite[eq.~(30)]{BSV}. Then, %$\omega_{\breve{g}}=\varpi_{\breve{g}}^\ast(\omega_{g_\alpha})$, $\omega_{\breve{h}}=\varpi_{\breve{h}}^\ast(\omega_{g_\alpha^\ast})$, and the following identities hold:
\begin{gather*}
\langle x,\omega_{\breve{g}}\rangle= \langle \varpi_{\breve{g}\ast}(x),\omega_{g_\alpha}\rangle \quad \text{for all }x\in T_g^+(N), \\
\langle x,\omega_{\breve{h}}\rangle= \langle \varpi_{\breve{g}\ast}(x),\omega_{g_\alpha^\ast}\rangle \quad \text{for all }x\in T_{g^\ast}^+(N)
\end{gather*}
\begin{comment}
The pairs $(\varpi_{\breve{g}}^\ast,\varpi_{\breve{g}\ast})$ and $(\varpi_{\breve{h}}^\ast,\varpi_{\breve{h}\ast})$ satisfy an ``adjointness'' property similar to the one stated for $(\varpi_{\breve{\hf}}^\ast,\varpi_{\breve{\hf}\ast})$.
\end{comment}

For a choice of level-$N$ test vectors $\breve{\hf}=\varpi_{\breve{\hf}}^\ast(\hf)$,  $\breve{g}=\varpi_{\breve{g}}^\ast(g_\alpha)$, $\breve{h}=\varpi_{\breve{h}}^\ast(g^\ast_\alpha)$,
we have a map
\[
\frk{Log}(\breve{\hf},\breve{g},\breve{h}):H^1_{\bal}\left(\bb{Q}_p,\bb{V}(\hf,g,g^\ast)(N)\right)\longrightarrow \Lambda_\hf[1/p]
\]
obtained from the map defined in \cite[Prop.~7.3]{BSV} by specializing to weight $l$ the second and third variables. It follows from [\emph{op.\,cit.}, Thm.~A] that the image of $\res_p(\kappa^{(2)}(\hf,g,h))$ under the map above is an element $\mathscr{L}_p(\breve{\hf},\breve{g},\breve{h})\in \Lambda_\hf[1/p]$ such that, for all $k'\geq 2l$ satisfying $k'\equiv k\pmod{2(p-1)}$,
\[
\mathscr{L}_p(\breve{\hf},\breve{g},\breve{h})(k')=\frac{\langle \breve{\hf}_{k'}^w,\delta^t \breve{g}\times \breve{h}\rangle_{Np}}{\langle \breve{\hf}_{k'}^w,\breve{\hf}_{k'}^w\rangle_{Np}}.
\]

Let $\bar{\cl{L}}_{\breve{\hf}\breve{g}\breve{h}}$ be the map defined in \cite[Prop.~7.1]{BSV} specialized to weight $l$ in the second and third variables. % and let $\eta_{\breve{\hf}}=\varpi_{\breve{\hf}}^\ast(\eta_\hf)$, $\omega_{\breve{g}}=\varpi_{\breve{g}}^\ast(\omega_{g_\alpha})$, $\omega_{\breve{h}}=\varpi_{\breve{h}}^\ast(\omega_{g^\ast_\alpha})$ be the differentials introduced in [\emph{op.\,cit.}, eq.~(122)] and [\emph{op.\,cit.}, eq.~(30)].
Then, we obtain a map
\[
\langle\bar{\cl{L}}_{\breve{\hf}\breve{g}\breve{h}}(-),\eta_{\breve{\hf}}\omega_{\breve{g}}\omega_{\breve{h}}\rangle : H^1\left(\bb{Q}_p,M(\hf,g,g^\ast)_f(N)\right)\longrightarrow \Lambda_\hf\hat{\otimes}\Lambda[1/p].
\]
The map $\frk{Log}(\breve{\hf},\breve{g},\breve{h})$ is obtained by composing the natural projection
\[
H^1_{\bal}\left(\bb{Q}_p,\bb{V}(\hf,g,g^\ast)(N)\right)\longrightarrow H^1\left(\bb{Q}_p,\bb{V}(\hf,g,g^\ast)_f(N)\right)
\]
with a suitable specialization of the map above.

Now, from the previous discussion, we have that
\[
\langle \bar{\cl{L}}_{\breve{\hf}\breve{g}\breve{h}}(-),\eta_{\breve{\hf}}\omega_{\breve{g}}\omega_{\breve{h}}\rangle=\langle \bar{\cl{L}}_{\hf g_\alpha g^\ast_\alpha}((\varpi_{\breve{\hf},\ast}\otimes\varpi_{\breve{g},\ast}\otimes \varpi_{\breve{h},\ast})(-)),\eta_{\hf}\omega_{g_\alpha}\omega_{g_\alpha^\ast}\rangle,
\]
where the map $\bar{\cl{L}}_{\hf g_\alpha g^\ast_\alpha}$ is defined in a way analogous to the way in which the map $\bar{\cl{L}}_{\breve{\hf}\breve{g}\breve{h}}$ is defined in \cite[Prop.~7.1]{BSV}.
Therefore, as before, the composition of the natural projection
\[
H^1_{\bal}\left(\bb{Q}_p,\bb{V}(\hf,g,g^\ast)\right)\longrightarrow H^1\left(\bb{Q}_p,\bb{V}(\hf,g,g^\ast)_f\right)
\]
with a suitable specialization of the map
\[
\left\langle \bar{\cl{L}}_{\hf g_\alpha g^\ast_\alpha}(-),\eta_{\hf}\omega_{g_\alpha}\omega_{g^\ast_\alpha}\right\rangle : H^1\left(\bb{Q}_p,M(\hf,g,g^\ast)_f\right)\longrightarrow \Lambda_\hf\hat{\otimes}\Lambda[1/p]
\]
yields a map
\[
\frk{Log}(\hf,g_\alpha,g^\ast_\alpha):H^1_{\bal}\left(\bb{Q}_p,\bb{V}(\hf,g,g^\ast)\right)\longrightarrow \Lambda_\hf[1/p].
\]
Moreover, for any choice of test vectors $\breve{\hf}, \breve{g}, \breve{h}$ as above, we have
\[
\frk{Log}(\hf,g_\alpha,g^\ast_\alpha)\bigl(\res_p\bigl((\varpi_{\breve{\hf},\ast}\otimes\varpi_{\breve{g},\ast}\otimes \varpi_{\breve{h},\ast})(\kappa^{(2)}(\hf,g,g^\ast))\bigr)\bigr)=\mathscr{L}_p(\breve{\hf},\breve{g},\breve{h}).
\]

It follows from \cite[\S3.5-6]{Hs} and [\emph{op.\,cit.}, Thm.~7.1] that there exist level-$N$ test vectors $\breve{\hf}, \breve{g}, \breve{h}$ for which, under some technical assumptions, we have a precise formula for the specializations of $\mathscr{L}_p(\breve{\hf},\breve{g},\breve{h})$ at even weights $k'\geq 2l$. We fix such test vectors.
\begin{comment}
More precisely, we set
\[
\breve{\hf}=\omega^{4-k}(d_\hf)d_\hf^{-1}[\langle -Nd _\hf\rangle^{-1}]U_{d_\hf}w_N(\pi_1^\ast(\hf)),\quad \breve{g}=\pi_1^\ast (g), \quad \breve{h}=(N/N_g)^{-r_2}\pi_2^\ast(\hg^\ast),
\]
where $d_\hf=d_\hf^{\text{(II)}}$ is defined in [\emph{op.\,cit}, \S3.4], the operator $U_{d_\hf}$ is defined in [\emph{op.\,cit}, \S3.1] and the operator $w_N$ is defined in \cite[Lem.~6.1]{BSV}. We are taking into account the discrepancy between conventions noted in [\emph{op.\,cit}, Rmk.~6.2].
\end{comment}
\begin{comment}
This choice of test vectors determines degeneracy maps
\begin{align*}
\varpi_{\breve{\hf},\ast}:H^1(\Gamma,\cl{D}_{r_1}')^{\text{ord}}&\longrightarrow H^1(\Gamma_1(N_\psi,p),\cl{D}'_{r_1})^{\text{ord}},\\
\varpi_{\breve{g},\ast}:H^1(\Gamma,L_{r_2})^{\text{ord}}&\longrightarrow H^1(\Gamma_1(N_g,p),L_{r_2})^{\text{ord}},
\\
\varpi_{\breve{h},\ast}:H^1(\Gamma,L_{r_2})^{\text{ord}}&\longrightarrow H^1(\Gamma_1(N_g,p),L_{r_2})^{\text{ord}}.
\end{align*}
\end{comment}
Then, we define
\[
\kappa^{(3)}=(\varpi_{\breve{\hf},\ast}\otimes\varpi_{\breve{g},\ast}\otimes\varpi_{\breve{h},\ast})\kappa^{(2)}
\]
in the group
\[
H^1\bigl(\bb{Q},H^1(\Gamma_1(N_f,p),\cl{D}_{r_1}')^{\text{ord}}\hat{\otimes} H^1(\Gamma_1(N_g,p),L_{r_2})^{\text{ord}}\hat{\otimes} H^1(\Gamma_1(N_g,p),L_{r_2})^{\text{ord}}(2-r_2-\bm{\kappa}^{1/2}_f)\bigr)
\]
and let
\[
\kappa^{(3)}(\hf,g,g^\ast)\in H^1\bigl(\bb{Q},\bb{V}_\hf(-\bm{\kappa}_f^{1/2})\hat{\otimes}\ad(T_g)\bigr)
\]
be the class obtained from $\kappa^{(3)}$ by projection to the isotypic quotients for $\hf$, $g$ and $g^\ast$. Then
\[
\frk{Log}(\hf,g_\alpha,g^\ast_\alpha)\bigl(\res_p\bigl(\kappa^{(3)}(\hf,g,g^\ast)\bigr)\bigr)=\mathscr{L}_p(\breve{\hf},\breve{g},\breve{h}).
\]

Observe that the map $w_{N_g}:H^1(\Gamma_1(N_g,p),L_{r_2}(\cl{O}))^{\text{ord}}\rightarrow H^1(\Gamma_1(N_g,p),L_{r_2}(\cl{O}))^{\text{ord}}$ defined in \cite[\S4.1.2]{BSV} descends to a map
$w_{N_g}: T_g\rightarrow T_{g^\ast}$. Taking the Galois action into account, this is actually a map $T_g\rightarrow T_{g^\ast}(\chi_g)$. Similarly, we have a map $w_{N_g}: T_{g^\ast}\rightarrow T_g(\chi_g^{-1})$. (We are denoting all these maps in the same way in the hope that this will not cause any confusion.)

Let $s:T_g\otimes T_{g^\ast}\rightarrow T_{g^\ast}\otimes T_g$ be the map which interchanges the two factors. Then, the composition $\tilde{s}=(-N_g)^{-r_2}\, s\circ (w_{N_g},w_{N_g})$ defines an endomorphism of $\ad(T_g)=T_g\otimes T_{g^\ast}(-1-r_2)$. This endomorphism is in fact an involution.

\begin{lemma}
Consider the direct sum decomposition $\ad(T_g)=\ad^0(T_g)\oplus 1$. Then:
\begin{enumerate}
\item $\ad^0(T_g)$ is the $1$-eigenspace for $\tilde{s}$;
\item $1$ is the $-1$-eigenspace for $\tilde{s}$.
\end{enumerate}
\end{lemma}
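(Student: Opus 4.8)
The plan is to reduce the statement, via Schur's lemma, to the single computation of the trace of $\tilde s$ on the $4$-dimensional space $\ad(V_g)$, and then to evaluate that trace using the known formula for the square of the Atkin--Lehner involution.

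First I would check that $\tilde s$ is $\GQ$-equivariant. The Atkin--Lehner maps are Galois equivariant once the twists by $\chi_g$ are recorded, i.e. $w_{N_g}\colon T_g\to T_{g^\ast}(\chi_g)$ and $w_{N_g}\colon T_{g^\ast}\to T_g(\chi_g^{-1})$; hence $w_{N_g}\otimes w_{N_g}\colon T_g\otimes T_{g^\ast}\to T_{g^\ast}(\chi_g)\otimes T_g(\chi_g^{-1})=T_{g^\ast}\otimes T_g$ is $\GQ$-equivariant, the nebentypus twists cancelling, and composing with the swap $s$ and with the scalar $(-N_g)^{-r_2}$ preserves this property. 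So $\tilde s$ is a $\GQ$-equivariant endomorphism of $\ad(T_g)=T_g\otimes T_{g^\ast}(-1-r_2)$, and it is an involution by the remark preceding the lemma.

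Next, since $g$ is not of CM type, the summand $\ad^0(V_g)$ in the decomposition $\ad(V_g)=\ad^0(V_g)\oplus 1$ is an absolutely irreducible $\GQ$-representation of dimension $3$, in particular not isomorphic to the trivial representation $1$; thus this is the decomposition of $\ad(V_g)$ into $\GQ$-isotypic components. By Schur's lemma the $\GQ$-equivariant involution $\tilde s$ therefore preserves both summands and acts on $\ad^0(V_g)$ (resp. on $1$) by a scalar $\varepsilon_0$ (resp. $\varepsilon_1$) with $\varepsilon_0,\varepsilon_1\in\{\pm1\}$; restricting to $T_g$ (the splitting being integral since $p$ is odd), the lemma is equivalent to $\varepsilon_0=+1$ and $\varepsilon_1=-1$. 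Since $\tr_{\ad(V_g)}(\tilde s)=3\varepsilon_0+\varepsilon_1$ and the equation $3\varepsilon_0+\varepsilon_1=2$ has the unique solution $(\varepsilon_0,\varepsilon_1)=(+1,-1)$ with $\varepsilon_i\in\{\pm1\}$, it suffices to prove that $\tr(\tilde s)=2$. To compute this, note that from $\tilde s(v\otimes\phi)=(-N_g)^{-r_2}\,w_{N_g}(\phi)\otimes w_{N_g}(v)$ one obtains, by a direct bookkeeping in bases of $V_g$ and $V_{g^\ast}$ (the twist $(-1-r_2)$ not affecting the trace), the identity $\tr_{\ad(V_g)}(\tilde s)=(-N_g)^{-r_2}\,\tr_{V_g}(w_{N_g}\circ w_{N_g})$, where $w_{N_g}\circ w_{N_g}$ denotes the composite $V_g\xrightarrow{w_{N_g}}V_{g^\ast}\xrightarrow{w_{N_g}}V_g$. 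By the computation of the square of the Atkin--Lehner involution in \cite[\S4.1.2]{BSV} this composite equals $(-N_g)^{r_2}\cdot\mathrm{id}_{V_g}$ (consistently with the fact that the scalar $(-N_g)^{-r_2}$ occurring in $\tilde s$ is precisely the one making $\tilde s$ an involution), whence $\tr(\tilde s)=(-N_g)^{-r_2}\cdot 2(-N_g)^{r_2}=2$, and we are done.

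The only genuinely delicate point is this last step: one must follow \cite{BSV}'s normalizations carefully enough to be certain that $w_{N_g}\circ w_{N_g}=(-N_g)^{r_2}\cdot\mathrm{id}$ on $V_g$ with the correct sign — a sign error here would give $\tr(\tilde s)=-2$ and interchange the two eigenspaces — and, similarly, to confirm the bookkeeping of the $\chi_g$-twists used for the equivariance in the first step. Everything else is formal, the non-CM hypothesis being used only to obtain the irreducibility of $\ad^0(V_g)$.
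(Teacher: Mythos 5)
Your proof is correct under the paper's standing hypothesis that $g$ is not of CM type, but it takes a genuinely different route from the paper's. The paper argues via the Poincar\'e duality pairing $\langle\,\cdot\,,\cdot\,\rangle\colon T_g\times T_{g^\ast}\to\cO(r_2+1)$ induced by cup product: combining its $(-1)^{r_2+1}$-symmetry with the identities $w_{N_g}^2=(-N_g)^{r_2}$ and $\langle w_{N_g}\alpha,w_{N_g}\beta\rangle=N_g^{r_2}\langle\alpha,\beta\rangle$, it shows that $(\alpha,\beta)\mapsto\langle\alpha,w_{N_g}\beta\rangle$ is alternating on $T_g$, after which the identification of the two eigenspaces is immediate linear algebra (the identity endomorphism, written out in terms of this pairing, is visibly sent to its negative by $\tilde{s}$). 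You replace this with Galois-equivariance of $\tilde{s}$, Schur's lemma, and a trace computation; both proofs ultimately hinge on the same normalization $w_{N_g}^2=(-N_g)^{r_2}$ from \cite{BSV}, your bookkeeping of the $\chi_g$-twists is right, and your identity $\tr(\tilde{s})=(-N_g)^{-r_2}\,\tr(w_{N_g}\circ w_{N_g})=2$ is correct, so the argument closes. What your route buys is that you never have to introduce the pairing or its symmetry properties; what it costs is the absolute irreducibility of $\ad^0(V_g)$, hence the non-CM hypothesis. That hypothesis is in force throughout the paper, but note that Section 3 is advertised as being developed in a slightly more general setting, and the paper's pairing argument is insensitive to CM: if $g$ had CM, then $\ad^0(V_g)$ would decompose further, the trace equation would become $\varepsilon_{0,1}+2\varepsilon_{0,2}+\varepsilon_1=2$, which has two solutions in $\{\pm1\}^3$, and your argument alone would not determine the eigenspaces.
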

\begin{proof}
As in \cite[p.~19]{BSV}, there is a bilinear form $L_{r_2}(\cl{O})\otimes L_{r_2}(\cl{O})\rightarrow \cl{O}\otimes\det^{-r_2}$. Via cup-product and the isomorphism $H^2_{\text{par}}(\Gamma_1(N_g,p),\cl{O})\simeq \cl{O}(1)$, we obtain a pairing
\[
H^1_{\text{par}}(\Gamma_1(N_g,p),L_{r_2}(\cl{O}))^{\text{ord}}\times H^1_{\text{par}}(\Gamma_1(N_g,p),L_{r_2}(\cl{O}))^{\text{ord}} \longrightarrow \cl{O}(r_2+1),
\]
where $H^1_{\text{par}}$ stands for parabolic cohomology as defined in \cite[p.~427]{GrSt}.
Since cup-product is anti-commutative in degree 1, the pairing above satisfies $\langle \alpha,\beta\rangle = (-1)^{r_2+1}\langle \beta,\alpha\rangle$ for any $\alpha,\beta$ in $H^1_{\text{par}}(\Gamma_1(N_g,p),L_{r_2}(\cl{O}))^{\text{ord}}$. On the other hand, the operator $w_{N_g}$ acting on $H^1_{\text{par}}(\Gamma_1(N_g,p),L_{r_2}(\cl{O}))^{\text{ord}}$ satisfies
$w_{N_g}^2=(-N_g)^{r_2}$ and $\langle w_{N_g}\alpha,w_{N_g}\beta\rangle=N_g^{r_2}\langle \alpha,\beta\rangle$ for any elements $\alpha,\beta\in H^1_{\text{par}}(\Gamma_1(N_g,p),L_{r_2}(\cl{O}))^{\text{ord}}$. Therefore
%for any elements $\alpha,\beta \in H^1_{\text{par}}(\Gamma_1(N_g,p),L_{r_2}(\cl{O}))^{\text{ord}}$,
we have
\[
\langle \alpha,w_{N_g}\beta\rangle=\frac{1}{N_g^{r_2}}\langle w_{N_g}\alpha,w_{N_g}^2\beta\rangle=(-1)^{r_2}\langle w_{N_g}\alpha,\beta\rangle=-\langle\beta,w_{N_g}\alpha\rangle.
\]
In particular, we deduce that $\langle\alpha,w_{N_g}\alpha\rangle=0$.

We can realize $T_g$ (resp. $T_{g^\ast}$) as the maximal quotient of $H^1_{\text{par}}(\Gamma_1(N_g,p),L_{r_2}(\cl{O}))^{\text{ord}}$ on which the Hecke operators $T_q'$ act as multiplication by $a_q(g)$ (resp. $a_q(g^\ast)$) and the diamond operators $[d]_{N_g}'$ act as multiplication by $\chi_g(d)$ (resp. $\chi_g(d)^{-1}$). Thus we obtain a commutative diagram
\[
\begin{tikzcd}
H^1_{\text{par}}(\Gamma_1(N_g,p),L_{r_2}(\cl{O}))^{\text{ord}}\times H^1_{\text{par}}(\Gamma_1(N_g,p),L_{r_2}(\cl{O}))^{\text{ord}} \arrow[r] \arrow[d] & \cl{O}(r_2+1) \arrow[d,equal]   \\
T_g\times T_{g^\ast}\arrow[r] & \cl{O}(r_2+1).
\end{tikzcd}
\]
Therefore, for any elements $\alpha,\beta\in T_g$, we have $\langle\alpha,w_{N_g}\beta\rangle=-\langle\beta,w_{N_g}\alpha\rangle$. The lemma follows easily from this.
\end{proof}

We will assume in the remaining of this section that $N_g\mid N_f$, so that $N=N_f$. Under this assumption, our test vectors are $\breve{\hf}=\hf$, $\breve{g}(q)=\pi_1^\ast(g_\alpha)$ and $\breve{h}=\pi_2^\ast(g_\alpha^\ast)$, up to multiplication by some constants in $\Frac{\Lambda_\hf}$ which do not affect the discussion that follows.

Let $s_{N_g}$ denote the operator which acts on the group
\[
H^1\bigl(\bb{Q},H^1(\Gamma,\cl{D}_{r_1}')^{\text{ord}}\hat{\otimes} H^1(\Gamma_1(N_g,p),L_{r_2})^{\text{ord}}\hat{\otimes} H^1(\Gamma_1(N_g,p),L_{r_2})^{\text{ord}}(2-r_2-\bm{\kappa}_f^{1/2})\bigr)
\]
by interchanging the second and third factors and define $\tilde{s}_{N_g}=(-N_g)^{-r_2} s_{N_g}\circ (1\otimes w_{N_g}\otimes w_{N_g})$.

\begin{propo}
\label{prop:sign-behaviour}
The class $\kappa^{(3)}(\hf,g,g^\ast)$ satisfies
\[
\tilde{s}_{N_g}\bigl(\kappa^{(3)}(\hf,g,g^\ast)\bigr)=-[N]^{-1/2}(w_{N}\otimes 1\otimes 1)\kappa^{(3)}(\hf,g,g^\ast).
\]
In particular, when we consider the direct sum decomposition
\[
H^1\bigl(\bb{Q},\bb{V}_\hf(-\bm{\kappa}_f^{1/2})\hat{\otimes}\ad(T_g)\bigr)=H^1\bigl(\bb{Q},\bb{V}_\hf(-\bm{\kappa}_f^{1/2})\hat{\otimes}\ad^0(T_g)\bigr)\oplus H^1\bigl(\bb{Q},\bb{V}_\hf(-\bm{\kappa}_f^{1/2})\bigr),
\]
the class $\kappa^{(3)}(\hf,g,g^\ast)$ lies in the summand
\begin{enumerate}
\item $H^1(\bb{Q},\bb{V}_\hf(-\bm{\kappa}_f^{1/2})\hat{\otimes}\ad^0(T_g))$, if $\varepsilon(f)=1$;
\item $H^1(\bb{Q},\bb{V}_\hf(-\bm{\kappa}_f^{1/2}))$, if $\varepsilon(f)=-1$.
\end{enumerate}
\end{propo}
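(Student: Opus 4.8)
The plan is to establish the displayed symmetry relation for $\kappa^{(3)}(\hf,g,g^\ast)$ by descending a symmetry already present on the ``big'' class $\kappa^{(2)}$ of \cite{BSV}, and then to read off the dichotomy (1)--(2) from the Lemma together with the action of the Atkin--Lehner operator on the Hida family $\hf$. Throughout I would use the standing assumption $N_g\mid N_f$ (so $N=N_f$), under which the chosen test vectors reduce to $\breve{\hf}=\hf$, $\breve g=\pi_1^\ast(g_\alpha)$, $\breve h=\pi_2^\ast(g_\alpha^\ast)$, making the transposition of the last two slots interact cleanly with the degeneracy maps.

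First I would record the behaviour of $\kappa^{(2)}$ (equivalently $\kappa^{(1)}$) under the transposition $s$ of its second and third tensor factors. These two factors carry the \emph{same} coefficient module $\cl{D}_{r_2}'$ on the same curve $Y=Y_1(N,p)$, and the ingredients $d_\ast(\mathbf{Det})$, $\mathtt{HS}$, $\mathtt{K}$, $w_p\otimes1\otimes1$, the ordinary projectors and the Hecke projector $\mathtt{s}_{\hf\hg\hh}$ entering the construction are, apart from three sources, invariant under this transposition: (i) the function $\mathbf{Det}$, built from a $2\times 2$ determinant and therefore anti-invariant; (ii) the cup-product and Hochschild--Serre maps, which live in odd cohomological degree and contribute a sign governed by the parity of $r_2$, exactly as in the proof of the Lemma where $\langle\alpha,\beta\rangle=(-1)^{r_2+1}\langle\beta,\alpha\rangle$; and (iii) the Atkin--Lehner normalisation implicit in the very definition of the class, which under the transposition is absorbed into an operator $w_N$ acting on the first (i.e. $\hf$-) factor. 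Collecting these signs gives an identity of the shape
\[
s\circ\kappa^{(2)}=\epsilon\cdot(w_N\otimes1\otimes1)\,\kappa^{(2)}
\]
for an explicit sign $\epsilon$.

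Next I would apply the degeneracy maps $\varpi_{\breve{\hf},\ast}\otimes\varpi_{\breve g,\ast}\otimes\varpi_{\breve h,\ast}$ followed by the projection to the $\hf$-, $g$- and $g^\ast$-isotypic quotients. This converts $s$ into $s_{N_g}$ and replaces the second and third factors by $T_g$ and $T_{g^\ast}$; since $w_{N_g}$ interchanges the test vectors $\pi_1^\ast(g_\alpha)$ and $\pi_2^\ast(g_\alpha^\ast)$ (up to the degeneracy combinatorics), in order to land back inside the \emph{same} group one must intertwine $T_g$ and $T_{g^\ast}$ by the Atkin--Lehner isomorphism $w_{N_g}\colon T_g\to T_{g^\ast}(\chi_g)$ recalled before the Lemma, the nebentypus cancelling in $\ad(T_g)=T_g\otimes T_{g^\ast}(-1-r_2)$. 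Inserting $1\otimes w_{N_g}\otimes w_{N_g}$, absorbing the normalising constant $(-N_g)^{-r_2}$ that turns $s_{N_g}$ into $\tilde s_{N_g}$, and tracking the remaining scalars (in particular the factor $[N]^{-1/2}$ coming from $\bm{\kappa}_f^{1/2}$), one arrives at
\[
\tilde s_{N_g}\bigl(\kappa^{(3)}(\hf,g,g^\ast)\bigr)=-[N]^{-1/2}(w_N\otimes1\otimes1)\,\kappa^{(3)}(\hf,g,g^\ast),
\]
which is the asserted relation.

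Finally, for the dichotomy: on $H^1(\bb{Q},\bb{V}_\hf(-\bm{\kappa}_f^{1/2})\hat{\otimes}\ad(T_g))$ the operator $\tilde s_{N_g}$ is the identity on the $\hf$-factor and the involution $\tilde s$ of the Lemma on $\ad(T_g)$, hence acts as $+1$ on $\bb{V}_\hf(-\bm{\kappa}_f^{1/2})\hat{\otimes}\ad^0(T_g)$ and as $-1$ on $\bb{V}_\hf(-\bm{\kappa}_f^{1/2})$; while, since $f$ has trivial nebentypus and level $N=N_f$, the operator $w_N$ acts on the isotypic quotient $\bb{V}_\hf$ by the Atkin--Lehner pseudo-eigenvalue of $\hf$, and relating this (after normalising by $[N]^{-1/2}$) to the sign of the functional equation of $L(f,s)$ identifies the action of $[N]^{-1/2}(w_N\otimes1\otimes1)$ on $\kappa^{(3)}(\hf,g,g^\ast)$ with multiplication by $-\varepsilon(f)$. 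Writing $\kappa^{(3)}(\hf,g,g^\ast)=\kappa^{0}+\kappa^{\mathrm{triv}}$ according to $\ad(T_g)=\ad^0(T_g)\oplus 1$, the relation above then forces $\kappa^{0}=\varepsilon(f)\,\kappa^{0}$ and $\kappa^{\mathrm{triv}}=-\varepsilon(f)\,\kappa^{\mathrm{triv}}$; hence $\kappa^{\mathrm{triv}}=0$ when $\varepsilon(f)=1$ and $\kappa^{0}=0$ when $\varepsilon(f)=-1$, which are exactly assertions (1) and (2). The main obstacle is the sign accounting: the residual sign $\epsilon$ (from the anti-invariance of $\mathbf{Det}$, the odd-degree cup-product and Hochschild--Serre maps, and the Atkin--Lehner normalisations threaded through $\mathbf{Det}\mapsto\kappa^{(1)}\mapsto\kappa^{(2)}\mapsto\kappa^{(3)}$) and the precise scalar by which $[N]^{-1/2}w_N$ acts on $\hf$, including the $k$-dependent powers of $-1$ and $N$, must be computed and shown to combine into the clean relation above, compatibly with the eigenvalue assignment of the Lemma; everything else is a lengthy but routine unwinding of the maps defining the classes.
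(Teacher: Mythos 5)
Your strategy is the same as the paper's: descend a symmetry relation from $\kappa^{(2)}$ to $\kappa^{(3)}$, and then combine it with the eigenspace decomposition of the Lemma and with the fact that $-[N]^{-1/2}w_N$ acts on $\bb{V}_\hf$ by $\varepsilon(f)$ (the paper quotes Howard for this). Your closing argument for the dichotomy (1)--(2) is correct and is exactly how the paper concludes.

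There is, however, a genuine gap in the first and main assertion. Your intermediate claim $s\circ\kappa^{(2)}=\epsilon\cdot(w_N\otimes 1\otimes 1)\kappa^{(2)}$ is not right as stated: before isotypic projection the second and third tensor factors carry the same module, and the bare transposition acts on $\kappa^{(2)}$ by the scalar $(-1)^{r_1/2+r_2+1}$ alone (symmetry of $\mathbf{Det}$ together with anticommutativity of the cup product in degree one); no first-factor $w_N$ appears, and attributing one to ``the Atkin--Lehner normalisation implicit in the definition of the class'' (i.e.\ to the $w_p\otimes 1\otimes 1$) misidentifies the mechanism. In the paper the operator $w_N\otimes 1\otimes 1$ enters only when one composes the swap with $1\otimes w_N\otimes w_N$, and the inputs that produce it are: the invariance $w_N(\mathbf{Det})=\mathbf{Det}$ under the \emph{diagonal} Atkin--Lehner operator, pushed through $d_\ast$ via a commutative diagram; the commutation relations $w_pw_N=[p]_Nw_Nw_p$ and $\mathtt{s}_{\hf\hg\hh}\circ([p]_Nw_N\otimes w_N\otimes w_N)=[p]'_N(w_N\otimes w_N\otimes w_N)\circ\mathtt{s}_{\hf\hg\hh}$; and $w_N^2=[-N]$ on the first factor. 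These yield $(1\otimes w_N\otimes w_N)\kappa^{(1)}=\kappa_f^{\ast}(N)\,([p]'_Nw_N\otimes 1\otimes 1)\kappa^{(1)}$, and after the $N^{-r_2}$ normalisation and the swap sign this is what produces the exact factor $-[N]^{-1/2}$ in the stated relation. You defer precisely this ``sign accounting'' as the main obstacle, but it is the entire content of the proposition: without the identity $w_N(\mathbf{Det})=\mathbf{Det}$ and the commutation relations above, the test-vector/intertwining heuristic you give does not determine the scalar, and the relation for $\tilde{s}_{N_g}(\kappa^{(3)}(\hf,g,g^\ast))$ remains unproved.
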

\begin{proof}
We have the following commutative diagram
\[
\begin{tikzcd}
H^0_{\et}(Y,\bm{\cl{A}}_{r_1}'\otimes \bm{\cl{A}}_{r_2}\otimes\bm{\cl{A}}_{r_2}(-\kappa^\ast)) \arrow[r, "d_\ast"] \arrow[d, "w_N"] & H^4_{\et}(Y^3,\bm{\cl{A}}_{r_1}'\boxtimes \bm{\cl{A}}_{r_2}\boxtimes \bm{\cl{A}}_{r_2}(-\kappa^\ast)\otimes\bb{Z}_p(2)) \arrow[d, "{(w_N, w_N, w_N)}"] \\
H^0_{\et}(Y,\bm{\cl{A}}_{r_1}'\otimes \bm{\cl{A}}_{r_2}\otimes\bm{\cl{A}}_{r_2}(-\kappa^\ast)) \arrow[r, "d_\ast"] & H^4_{\et}(Y^3,\bm{\cl{A}}_{r_1}'\boxtimes \bm{\cl{A}}_{r_2}\boxtimes \bm{\cl{A}}_{r_2}(-\kappa^\ast)\otimes\bb{Z}_p(2)),
\end{tikzcd}
\]
where $w_N$ stands here for the operator defined in \cite[\S2.3.1]{BSV} and $(w_N,w_N,w_N)$ is defined in a similar way for the cohomology of $Y^3$. It follows from the definition of $\mathbf{Det}$ that $w_N(\mathbf{Det})=\mathbf{Det}$. Since $w_p w_N = [p]_N w_N w_p$ and $\mathtt{s}_{\hf\hg\hh}\circ([p]_N w_N\otimes w_N\otimes w_N)=[p]_N'(w_N\otimes w_N\otimes w_N)\circ \mathtt{s}_{\hf\hg\hh}$, it follows that
\[
(w_N\otimes w_N\otimes w_N)\kappa^{(1)}=\kappa_{\hf\hg\hh}^\ast(N)([p]_N\otimes 1\otimes 1)\kappa^{(1)}.
\]
Since $(w_N^2\otimes 1\otimes 1)$ acts as multiplication by $[-N]\otimes 1\otimes 1$, we deduce that
\[
(1\otimes w_N\otimes w_N)\kappa^{(1)}=\kappa_f^{\ast}(N)([p]'_N w_N\otimes 1\otimes 1)\kappa^{(1)}
\]
and therefore that
\[
N^{-r_2}(1\otimes w_N\otimes w_N)\kappa^{(2)}=\kappa_f^{-1/2}(N)([p]'_N w_N\otimes 1\otimes 1)\kappa^{(2)}.
\]
Let $s_N$ denote the operator which acts on the group
\[
H^1\bigl(\bb{Q},H^1(\Gamma,\cl{D}_{r_1}')^{\text{ord}}\hat{\otimes} H^1(\Gamma,L_{r_2})^{\text{ord}}\hat{\otimes} H^1(\Gamma,L_{r_2})^{\text{ord}}(2-r_2-\bm{\kappa}_f^{1/2})\bigr)
\]
by interchanging the second and third factors. Then, we have that
\[
s_N\circ (1\otimes w_N\otimes w_N)=(1\otimes w_N\otimes w_N)\circ s_N,
\]
and, taking into account the definition of $\mathbf{Det}$ and the fact that the K\"unneth isomorphism
\[
H^3_{\et}(Y_{\overline{\bb{Q}}}^3,\bm{\cl{A}}_{r_1}'\boxtimes \bm{\cl{A}}_{r_2}\boxtimes\bm{\cl{A}}_{r_2})\cong H^1_{\et}(Y_{\overline{\bb{Q}}},\bm{\cl{A}}_{r_1}')\otimes H^1_{\et}(Y_{\overline{\bb{Q}}},\bm{\cl{A}}_{r_2})\otimes H^1_{\et}(Y_{\overline{\bb{Q}}},\bm{\cl{A}}_{r_2})
\]
is given by cup-product, which is anti-commutative in degree 1 (\emph{cf.} the proof of \cite[Prop.~4.1.2]{LZ}), we deduce that $s_N(\kappa^{(2)})=(-1)^{r_1/2+r_2+1}\kappa^{(2)}$. Define $\tilde{s}_N=(-N)^{-r_2} s_N\circ (1\otimes w_N\otimes w_N)$. Then, we have that
\[
\tilde{s}_N(\kappa^{(2)})=(-1)^{r_1/2+1}\kappa_f^{-1/2}(N)([p]'_N w_N\otimes 1\otimes 1)\kappa^{(2)}.
\]
Since $(1\otimes \pi_{1\ast}\otimes \pi_{2\ast})\circ \tilde{s}_N=\tilde{s}_{N_g}\circ (1\otimes\pi_{1\ast}\otimes\pi_{2\ast})$, it follows that
\[
\tilde{s}_{N_g}(\kappa^{(3)})=-[N]^{-1/2}([p]'_N w_N\otimes 1\otimes 1)\kappa^{(3)}
\]
and therefore that
\[
\tilde{s}_{N_g}\bigl(\kappa^{(3)}(\hf,g,g^\ast)\bigr)=-[N]^{-1/2}(w_N\otimes 1\otimes 1)\kappa^{(3)}(\hf,g,g^\ast).
\]
Finally, it follows from \cite[Prop. 2.3.6]{How2} that $-[N]^{-1/2}w_N$ acts on $\bb{V}_\hf$ as multiplication by $\varepsilon(f)$, so the last part of the proposition follows from the previous lemma.
\end{proof}

\begin{remark}
From the definition of the map
\[
\frk{Log}(\hf,g_\alpha,g^\ast_\alpha):H^1_{\bal}\left(\bb{Q}_p,\bb{V}(\hf,g,g^\ast)\right)\longrightarrow \Lambda_\hf[1/p],
\]
one can see that it factors through the cohomology of $\bb{V}(\hf,g,g^\ast)_f$, and therefore that it factors through
\[
H^1_{\bal}\left(\bb{Q}_p,\bb{V}(\hf,g,g^\ast)\right)\longrightarrow H^1\bigl(\bb{Q}_p,\bb{V}_\hf(-\bm{\kappa}_f^{1/2})\otimes\ad^0(T_g)\bigr)\longrightarrow H^1\left(\bb{Q}_p,\bb{V}(\hf,g,g^\ast)_f\right).
\]
Therefore, \emph{without} the need to appeal to the reciprocity law, it follows from Proposition~\ref{prop:sign-behaviour} that when $\varepsilon(f)=-1$ we have
\[
\frk{Log}(\hf,g_\alpha,g^\ast_\alpha)(\kappa)=0.
\]
Of course, this can also be seen from the reciprocity law: Since $\varepsilon(f)=-1$ forces the vanishing of $L(\hf_{k'},k'/2)$ for all $k'\equiv k\pmod{2(p-1)}$, and this is a factor of $L(\hf_{k'}\otimes g\otimes g^\ast, c')$, it follows form the interpolation formula that $\mathscr{L}_p(\breve{\hf},\breve{g},\breve{h})$ is identically zero.
\end{remark}

\begin{remark}
As noted above, the discussion in this section is unnecessary for the applications that we will discuss. Indeed, as observed in the previous remark, the reciprocity law factors through  $H^1\bigl(\bb{Q}_p,\bb{V}_\hf(-\bm{\kappa}_f^{1/2})\otimes\ad^0(T_g)\bigr)$.
Therefore,
%if the triple product $L$-function does not vanish at some point in the range of interpolation,
the nonvanishing of the triple product $p$-adic $L$-function at some point (necessarily when $\varepsilon(f)=+1$) implies that the image of $\kappa^{(3)}$ in this group is nontrivial, which is what we will actually need. However, it is interesting that %, at least in some cases,
we can already see from the geometric construction that the class lies where it is expected.
\end{remark}

\begin{remark}
Let us discuss the sign in a little bit more detail. In order to construct the $p$-adic $L$-function attached to $(f,g,g^\ast)$, it is required in \cite{Hs} that the local signs at finite primes of the arithmetic specializations of the representation $\bb{V}(\hf,\hg,\hg^\ast)=\bb{V}_{\hf} \hat{\otimes}\bb{V}_{\hg} \hat{\otimes} \bb{V}_{\hg^\ast}(-1-\bm{\kappa}^\ast)$ are all equal to 1. In particular, in our case, this imposes the condition that $\varepsilon_\ell(\hf_{k'})=\varepsilon_\ell(\hf_{k'}\otimes\ad^0(g))$ for all $\ell\mid N$ and for all $k'\equiv k\pmod{2(p-1)}$. The corresponding signs at infinity can be computed from the Hodge types $\{(p,q), (q,p)\}$ as in \cite[\S5.3]{deligne}. For the representation $V_{\hf_{k'}}\otimes\ad^0(V_g)$, the Hodge types are as follows:
\begin{enumerate}
\item[(i)] $\{(k'/2+l-2,-k'/2-l+1), (-k'/2-l+1,k'/2+l-2)\}$;
\item[(ii)] $\{(k'/2-1,-k'/2), (-k'/2,k'/2-1)\}$;
\item[(iii)] $\{(k'/2-l,-k'/2+l-1), (-k'/2+l-1,k'/2-l)\}$.
\end{enumerate}
After that, and following the results of [\emph{loc.\,cit.}], we get that the sign $\varepsilon_\infty(f\otimes\ad^0(g))$ is $(-1)^{k'/2}$ if $k' \geq 2l$ and $(-1)^{1+k'/2}$ if $k'<2l$. The sign of $\varepsilon_\infty(\hf_{k'})$, however, is always equal to $(-1)^{k'/2}$. Therefore, in the balanced region (i.e. for $k'<2l$), the motives attached to $\hf_{k'}$ and $\hf_{k'}\otimes\ad^0(g)$ have opposite global signs. Since it is in this region that the corresponding specializations of the class $\kappa^{(3)}(\hf,g,g^\ast)$ belong to the Bloch-Kato Selmer group, we expect the behaviour that was shown in Proposition~\ref{prop:sign-behaviour}.
\end{remark}

\section{The $p$-adic $L$-function}\label{sec:Lp}

%Let $g\in S_l(N_g,\chi_g)$ and $\psi$ a Hecke character of $K$ of infinity type $(1-k,0)$ as introduced in $\S\ref{sec:Selmer}$, and recall that we assume that $(p)=\fp\overline{\fp}$ splits in $K$.

In this section, we keep the assumption that $(p)=\fp\overline{\fp}$ splits in $K$. In addition, from now on, for simplicity we assume that $p\nmid h_K$, where $h_K$ is the class number of $K$.

Let $g\in S_l(N_g,\chi_g)$ be an ordinary newform not of CM-type. Let $\mathfrak{c}$ be an ideal of $\cO_K$ coprime to $p$, and fix a Hecke character $\psi_0$ of infinity type $(-1,0)$ and conductor $\mathfrak{c}\fp^e$ with $e\in\{0,1\}$. We assume that the central character $\varepsilon_{\psi_0}$ of $\psi_0$ is of the form
\begin{equation}\label{eq:central}
\textrm{$\varepsilon_{\psi_0}=\varepsilon_K\omega^{r_1}$ for some even integer $r_1$},\tag{H0}
\end{equation}
where $\omega$ is the Teichm\"uller character.
%(i.e. the conductor is $\frk{c}$ if $p-1\mid r_1$ and $\frk{cp}$ otherwise).

%Assume that the prime $p$ splits in $K$ and let $\frk{p}$ be the prime of $K$ above $p$ determined by the embedding $\iota_p:\overline{\Q}\hookrightarrow\overline{\Q}_p$ that we fixed in the introduction. We also assume that $p$ does not divide the class number $h_K$.

\subsection{Lifting of automorphic representations}

Let $\pi$ be the cuspidal automorphic representation of ${\rm GL}_2(\bb{A}_{\bb{Q}})$ attached to $g$. The central character of $\pi$ is the adelic character $\omega_g$ defined by the condition that for any prime $q\nmid N_g$ and any uniformizer $\varpi_q$ we have $\omega_{g,p}(\varpi_q)=\chi_g(q)$. Since $p\nmid N_g$, the local component $\pi_p$ is a spherical representation, and it follows from \cite[Thm.~4.6.4]{Bump} and its proof that $\pi_p$ is isomorphic to the principal series $\pi(\chi,\chi^{-1}\omega_g)$, where $\chi$ is the unramified character of $\bb{Q}_p^\times$ defined by $\chi(p)=\alpha_p(g)p^{(1-l)/2}$.

Since we are assuming that $g$ is not of CM-type, and in particular it does not have CM by $K$, it follows from \cite[Prop. 2.3.3]{GJ} that $\pi$ admits, adopting the terminology of [\emph{op.\,cit.}], a base change lifting to a cuspidal automorphic representation $\pi_K$ of ${\rm GL}_2(\bb{A}_K)$. We fix such a lifting. Observe that if $\frk{p},\overline{\frk{p}}$ are the places of $K$ above $p$, then $\pi_{K,\frk{p}}\simeq \pi_{K,\overline{\frk{p}}}\simeq \pi_p$.

From the assumption that $g$ is not of CM-type we deduce that there is no non-trivial character $\eta$ of $K^\times\backslash\bb{A}_K^\times$ such that $\pi_K\simeq\pi_K\otimes \eta$. Indeed, the existence of such a character would imply that there exists a quadratic extension $L$ of $K$ such that, for all prime $\ell$, the restriction to $G_K$ of the $\ell$-adic Galois representation attached to $g$ is induced from a character of $G_L$, which is not possible by \cite[Thm.~2.1]{Rib85}. Now, it follows from \cite[Thm.~9.3]{GJ} that $\pi_K$ admits an adjoint lifting to a cuspidal automorphic representation $\Pi_{{\rm Ad}^0(g)}$ of ${\rm GL}_3(\bb{A}_K)$. Fix such a lifting and define
%the cuspidal automorphic representation
\[
\Pi:= \Pi_{{\rm Ad}^0(g)}\otimes \psi_0\vert\cdot\vert^{1/2}.
\]
Observe that $\Pi_\frk{p}\simeq \Pi_{\overline{\frk{p}}}\simeq \pi(\chi^2\omega_{g,p}^{-1},1,\chi^{-2}\omega_{g,p})\otimes  \psi_0\vert\cdot\vert^{1/2}$ and it follows from the definition of $\chi$ that $\chi^2\omega_{g,p}^{-1}\neq \vert\cdot\vert^{\pm 1/2}$ and therefore that $\pi(\chi^2\omega_{g,p}^{-1},1,\chi^{-2}\omega_{g,p})=\Ind_{B}^{\mathrm{GL}_3}(\chi^2\omega_{g,p}^{-1},1,\chi^{-2}\omega_{g,p})$, where $B$ denotes the Borel subgroup of $\GL_3(\mathbb Q_p)$.

\subsection{Descent to unitary groups}

Let $U(2,1)$ be the quasi-split unitary group corresponding to the quadratic extension $K/\bb{Q}$. Let $\Phi\in\mathrm{GL}_3(K)$ be the matrix whose entries are $\Phi_{ij}=(-1)^{i-1}\delta_{i,4-j}$. Then we can describe $U(2,1)$ by specifying its functor of points:
\[
U(2,1)(R)=\left\lbrace g\in\mathrm{GL}_3(R\otimes_{\bb{Q}} K)\;\colon\; g\Phi\,{}^t\!\overline{g}=\Phi\right\rbrace
\]
for any $\bb{Q}$-algebra $R$.

Let $U(3)$ be the definite unitary group whose functor of points is given by
\[
U(3)(R)=\left\lbrace g\in\mathrm{GL}_3(R\otimes_{\bb{Q}} K)\;\colon\; g\,{}^t\!\overline{g}=I_3\right\rbrace.
\]

Given a representation $\rho$ of ${\rm GL}_3(\bb{A}_K)$, let $\tilde{\rho}$ be the representation defined on the same space by $\tilde{\rho}(x)=\rho({}^t\!\bar{x}^{-1})$. Then, the representation $\Pi$ defined above satisfies $
\Pi\simeq \tilde{\Pi}$, and so it follows from \cite[Thm. 13.3.3]{Rog} that there exists a cuspidal automorphic representation $\sigma'$ of $U(2,1)(\bb{A}_\bb{Q})$ whose base change to $K$ is isomorphic to $\Pi$. Fix such a representation $\sigma'$. Observe that $\sigma'_p\simeq\pi(\chi^2\omega_g^{-1},1,\chi^{-2}\omega_g)\otimes  \psi_0\vert\cdot\vert^{1/2}$ under the identification $U(2,1)(\bb{Q}_p)=\mathrm{GL}_3(\bb{Q}_p)$. Also, from [\emph{op.\,cit.}, Prop. 13.2.2], the local representation $\sigma'_\infty$ is square-integrable, so, applying [\emph{op.\,cit.}, Prop. 14.6.2], we can transfer $\sigma'$ to a representation $\sigma$ of $U(3)$. The local components of $\sigma$ at finite primes agree with those of $\sigma'$, so in particular we have that $\sigma_p\simeq \sigma_p'$.

\begin{remark}\label{rk:fromU3toGU3}
Let $GU(3)$ be the definite unitary similitude group whose functor of points is given by
\[
GU(3)(R)=\left\lbrace g\in\mathrm{GL}_3(R\otimes_{\bb{Q}} K)\;\colon\; g\,{}^t\!\overline{g}=\nu(g) I_3\text{ for some } \nu(g)\in R^\times\right\rbrace.
\]
As explained in \cite[\S1.8]{BR93}, one can extend $\sigma$ to an irreducible automorphic representation of $GU(3)$ by choosing an extension of the central character of $\sigma$ to the center of $GU(3)$.
\end{remark}

\begin{comment}

\subsection{Transfer to unitary groups}

Our approach to construct a $p$-adic $L$-function for the symmetric square $\Ad^0(g)_{/K}$ is based on using a base transfer to the unitary group $U(3)$, and then use the general theory available in this setting. We use the following result due to Rogawski \cite{Rog}.

\begin{proposition}
A Galois representation for $\GL(3)_{/K}$, invariant under the automorphism $g \mapsto ^t\bar{g}^{-1}$, comes from a base change transfer of $U(3)$.
\end{proposition}

\begin{remark}
Rogawski's result is more precise, establishing a bijection between representations for $\GL(3)_{/K}$ invariant under the automorphism $g \mapsto ^t\bar{g}^{-1}$ and the so-called stable representations of $U(3)$. See loc.\,cit. for the precise definitions.
\end{remark}

\end{comment}

\subsection{$p$-adic $L$-functions for unitary groups}

A construction of $p$-adic $L$-functions for unitary groups is given in \cite{EW}, and, in great generality in \cite{EHLS}. Here we deduce from these works the existence of an anticyclotomic $p$-adic $L$-function for the conjugate self-dual representation $V$ in $\S\ref{subsec:adjoint}$.

%In this case, we use the approach of \cite{EW}, who prove a precise interpolation formula in the context that we need, up to some local constants which are explicitly computed in the work of Eischen--Liu \cite{EL}.

Let $\mathcal{O}$ be the ring of integers of a finite extension of $\Q_p$ containing the values of $\psi_0$, and write
\[
\Lambda^{\rm ac}=\cO\dBr{\Gamma^{\rm ac}}
\]
for the anticyclotomic Iwasawa algebra, where $\Gamma^{\rm ac}$ is the Galois group of the anticyclotomic $\Z_p$-extension of $K$.

We will need to consider the following CM periods, as they are introduced in \cite{BDP12}:
\begin{itemize}
\item $\Omega_{\infty} \in \mathbb{C}^\times$ is the complex period attached to $K$ defined in [\emph{op.\,cit.}, eq.~(2-15)];
\item $\Omega_p \in \bb{C}_p^\times$ is the $p$-adic period attached to $K$ defined in [\emph{op.\,cit.}, eq.~(2-17)].
\end{itemize}

%{\color{blue} Canviem aqui la notacio doncs per fer-la consistent amb la del triple product (de la manera que surt a la factoritzacio)?}

\begin{theorem}\label{thm:sym}

There exists an element
\[
L_p({\rm ad}^0(g_K)\otimes\psi_0)\in{\rm Frac}\,\Lambda^{\rm ac}
\]
such that for all characters $\xi$ of $\Gamma^{\rm ac}$ crystalline at both $\fp$ and $\bar{\fp}$ and corresponding to a Hecke character of infinity type $(-n,n)$ with $n\equiv r_1/2\pmod{p-1}$ and $n\geq l-1$, we have

%There exists an element \[ L_p(\hf, \ad^0(g)) \in \Frac(\Lambda_{\hf}) \] such that for all weights $k'\geq 2l$ with $k'\equiv k \pmod{2(p-1)}$ we have

\[
L_p({\rm ad}^0(g_K)\otimes\psi_0)(\xi) = \biggl( \frac{\Omega_p}{\Omega_{\infty}} \biggr)^{6n+3} \cdot \pi^{3n} \cdot \Gamma(n,l) \cdot \mathcal{E}_p({\rm ad}^0(g),\psi_0\xi)^2 \cdot L({\rm ad}^0(g_K) \otimes\psi_0^{-1}\xi^{-1}\omega^n,0),
\]
where:
\begin{itemize}
	\item $\Gamma(n,l)=(n+l-1)!\cdot n!\cdot(n-l+1)!$,
	\item $\mathcal{E}_p({\rm ad}^0(g),\psi_0\xi)=\left(1-\frac{\alpha_g(\psi_0\xi\omega^{-n})(\fp)}{\beta_g p}\right) \cdot\left(1-\frac{(\psi_0\xi\omega^{-n})(\fp)}{p}\right)\cdot\left(1-\frac{\beta_g (\psi_0\xi\omega^{-n})(\fp)}{\alpha_g p}\right)$.

%\item $\mathcal{E}(\hf_{k'},\ad^0(g))=(1-\frac{\alpha_{g}\beta_{k'}}{\beta_g p^{k'/2}}) (1-\frac{\beta_{k'}}{p^{k'/2}}) (1-\frac{\beta_{g}\beta_{k'}}{\alpha_g p^{k'/2}})$,
%\item $\hf_{k'}^\sharp$ is the newform attached to the $p$-stabilized newform $\hf_{k'}$.
\end{itemize}
\end{theorem}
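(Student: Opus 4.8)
The plan is to obtain $L_p(\ad^0(g_K)\otimes\psi_0)$ directly from the doubling-method construction of $p$-adic $L$-functions for unitary groups of \cite{EW,EHLS}, applied to the definite unitary group $U(3)$ and the automorphic representation $\sigma$ produced in the previous subsections, while letting the auxiliary character move in the anticyclotomic direction. Concretely, one uses the doubling embedding $U(3)\times U(3)\hookrightarrow U(3,3)$, pulls back a Siegel--Eisenstein series on $U(3,3)$ attached to a Hecke character of $K$, and integrates it against $\varphi_\sigma\otimes\overline{\varphi_\sigma}$ for a suitable arithmetic vector $\varphi_\sigma\in\sigma$ that is ordinary at $p$; by the basic identity of the doubling method this computes the standard $L$-function of $\sigma$ twisted by the character, multiplied by explicit local zeta integrals. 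Allowing the character to vary $p$-adically and feeding in the Eisenstein measure of \cite{EHLS} produces an element of $\Frac\,\Lambda^{\rm ac}$, the passage to the fraction field recording a congruence-number/period denominator which is $\xi$-independent and which we do not make explicit (it is harmless for the applications, as in Theorem~\ref{thm:intro-B}). This element is $L_p(\ad^0(g_K)\otimes\psi_0)$.

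It then remains to rewrite the interpolation formula of \cite{EHLS} in the arithmetic normalization of the statement. The $L$-factor is controlled by the functoriality dictionary already in place: the standard $L$-function of $\sigma$ base-changes to that of $\Pi=\Pi_{\Ad^0(g)}\otimes\psi\lvert\cdot\rvert^{(k-1)/2}$ on $\GL_3(\bb A_K)$, so twisting by the Hecke character attached to $\xi$, passing to the contragredient, and unwinding the relation between $\psi$ and $\psi_0$ identifies the classical value at the point dictated by the infinity type $(-n,n)$ with $L(\ad^0(g_K)\otimes\psi_0^{-1}\xi^{-1}\omega^n,0)$; the Teichm\"uller twist $\omega^n$ is the usual discrepancy between the $p$-adic and complex avatars, and the congruence $n\equiv r_1/2\pmod{p-1}$ together with $n\geq l-1$ singles out those $\xi$ for which this value is critical and $\sigma$ is cohomological. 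The archimedean factor $\pi^{3n}\cdot\Gamma(n,l)$ comes from the archimedean doubling zeta integral evaluated on the holomorphic vector of the weight determined by $n$ and $l$: the three shifted factorials $(n+l-1)!\,n!\,(n-l+1)!$ reflect the Hodge--Tate numbers $\{l-1,0,1-l\}$ of $\ad^0(V_g)$ shifted by $n$ (so that $n\geq l-1$ is exactly the range in which all three are finite), and the computation is the one carried out by Garrett, Shimura and Eischen. The modified Euler factor $\mathcal E_p(\ad^0(g),\psi_0\xi)$ is the value of the local doubling integral at $p$: since $(p)=\fp\overline{\fp}$ splits and $\sigma_\fp\simeq\pi(\chi^2\omega_{g,p}^{-1},1,\chi^{-2}\omega_{g,p})\otimes\psi\lvert\cdot\rvert^{(k-1)/2}$, with $\{\alpha_g/\beta_g,\,1,\,\beta_g/\alpha_g\}$ the Satake parameters of $\ad^0(g)$ at $p$, the ordinary choice of $\varphi_\sigma$ forces the integral to produce exactly the three displayed Euler factors, and the square records the two vectors $\varphi_\sigma,\overline{\varphi_\sigma}$ in the doubling integral (equivalently, the two places $\fp,\overline{\fp}$ above $p$). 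Finally the CM period ratio $(\Omega_p/\Omega_\infty)^{6n+3}$ appears because $L(\ad^0(g_K)\otimes\psi_0^{-1}\xi^{-1}\omega^n,0)$ is, after removing a fixed ($\xi$-independent) period of $g$ and the explicit power of $\pi$, an algebraic multiple of $\Omega_\infty^{6n+3}$ by Deligne's conjecture for this CM-twisted motive (known here by work of Shimura and Harris), and \cite{EHLS} interpolates this algebraic ratio with $\Omega_\infty$ replaced by $\Omega_p$; alternatively one matches the CM period directly on the Eisenstein side, whose archimedean section is normalized by a power of $\Omega_\infty$ in the style of Hsieh's and Eischen's work.

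The main obstacle will be the precise bookkeeping of normalizations rather than any single conceptual difficulty. In particular: (i) choosing the arithmetic vector $\varphi_\sigma$ and the Eisenstein data so that the abstract period of \cite{EHLS} matches $(\Omega_p/\Omega_\infty)^{6n+3}$ and the residual $g$-dependent constant is genuinely independent of $\xi$, so that it can safely be pushed into the ``$\Frac\,\Lambda^{\rm ac}$'' ambiguity; (ii) verifying that the ordinary $p$-stabilization survives the chain of transfers (base change $\to$ adjoint lift $\to$ descent to $U(2,1)$ $\to$ transfer to $U(3)$) and pins down the local vector at $p$ yielding $\mathcal E_p$; and (iii) checking that the interpolation range $n\geq l-1$, $n\equiv r_1/2\pmod{p-1}$ stated here in classical terms coincides with the range covered by the Eisenstein measure and the classicality of the relevant nearly holomorphic forms on $U(3,3)$. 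None of these requires new ideas beyond \cite{EW,EHLS} and the explicit local computations available in the literature, but assembling them into the clean statement above is the substance of the proof.
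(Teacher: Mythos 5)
Your proposal is correct and follows essentially the same route as the paper: the paper's own proof is a two-sentence appeal to the unitary-group construction of \cite{EW} together with the archimedean zeta-integral computations of \cite{EL} (with $\tau_1=-l$, $\tau_2=0$, $\tau_3=l$, matching your Hodge--Tate shifts), and your longer account is an accurate unpacking of exactly that doubling-method argument. The only slight divergence is cosmetic: the paper attributes the passage to $\mathrm{Frac}\,\Lambda^{\rm ac}$ to the removal of Euler factors at primes $\ell\nmid p$ (interpolated by elements $\mathcal{P}_\ell\in\Lambda^{\rm ac}$) rather than to a congruence-number denominator, but both are absorbed into the same $\xi$-independent ambiguity.
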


\begin{proof}
Let $\sigma$ be the irreducible automorphic representation of $U(3)$ introduced in the previous subsection. Let $\Sigma$ be the set of places of $\bb{Q}$ consisting of $p$, infinity, the primes dividing $D_K$, and the primes at which $\sigma$ ramifies. On account of Remark~\ref{rk:fromU3toGU3}, the main result of \cite{EW} yields an element $\mathscr{L}^\Sigma_p\in\Lambda^{\ac}[1/p]$ such that, for all $\xi$ as in the statement, satisfies
\[
\mathscr{L}^\Sigma_p(\xi)=\biggl( \frac{\pi\Omega_p}{\Omega_{\infty}} \biggr)^{6n+3}\cdot\mathcal{E}_{p}(\xi)\cdot \mathcal{E}_\infty(\xi) \cdot L^\Sigma(\tilde{\sigma},\xi^{-1}\omega^n,0),
\]
%where $n\geq l-1$ is an integer such that the infinity type of $\xi$ is $(-n,n)$,
where $\tilde{\sigma}$ is the contragredient of $\sigma$, and $\cl{E}_p(\xi)$ and $\cl{E}_\infty(\xi)$ are certain modified Euler factors at $p$ and $\infty$, respectively. Since $\Sigma$ contains $p$, infinity, and all the ramified primes, we have that
\[
L^\Sigma(\tilde{\sigma},\xi^{-1}\omega^n,0)=L^\Sigma({\rm ad}^0(g_K) \otimes\psi_0^{-1}\xi^{-1}\omega^n,0).
\]

Since we are assuming that $p$ splits in $K$, the form of the modified Euler factor at $p$ can be extracted from \cite[eq.~(86)]{EHLS}. Up to a nonzero rational factor independent of $\xi$, it is given by
\[
\cl{E}_p(\xi)=\frac{\mathcal{E}_p({\rm ad}^0(g),\psi_0\xi)^2}{L_p(\ad^0(g_K)\otimes\psi_0^{-1}\xi^{-1}\omega^n,0)}.
\]

The form of the modified Euler factor at infinity can be extracted from \cite[eq.~(2.3.1)]{EL}. This formula, with $a=3$, $b=0$, $\underline{\tau}=(l,0,-l)$, $r=2n+2$ and $s=0$,
%combined with the formula for the factor denoted by $A(\pi_\infty,\chi_{u,\infty})$ in [\emph{op.\,cit.}, \S1.3],
yields, up to a nonzero rational factor independent of $\xi$,
\[
\cl{E}_\infty(\xi)=(2\pi i)^{-3n-3}\cdot\Gamma(n,l).
\]

Finally, the Euler factors at primes $\ell\in\Sigma\setminus\lbrace p,\infty\rbrace$ can be $p$-adically interpolated by certain elements $\mathcal{P}_\ell\in\Lambda^{\rm ac}$, and, multiplying by their inverses, we obtain the $p$-adic $L$-function in the statement.
\end{proof}

%Furthermore, although the interpolation formulae in \cite{EW} are computed for $k_1=0$, one can get the general case by using the results on the computation of the doubling archimedean zeta integrals in \cite{EL}.

\subsection{CM Hida family}\label{subsec:CM-Hida}

Let $\Gamma_K$ be the Galois group of the $\Z_p^2$-extension $K_\infty/K$ and put
\[
\Gamma_\fp={\rm Gal}(K_{\fp^\infty}/K)\simeq\Z_p,
\]
where $K_{\fp^\infty}$ is the maximal subfield of $K_\infty$ unramified outside of $\fp$, so that $K_{\fp^\infty}$ is the $\Z_p$-extension of $K$ inside the ray class field $K(\fp^\infty)$. Since we are assuming that $p\nmid h_K$, viewing $1+p\Z_p$ as a subgroup of $\cO_{K,\fp}^\times$, the restriction of the (geometrically normalized) Artin map to $K_\fp^\times$ induces an isomorphism ${\rm art}_\fp:1+p\Z_p\simeq\Gamma_{\fp}$. Let $\gamma_\fp\in\Gamma_{\fp}$ be the topological generator corresponding to $1+p$ under this isomorphism and, for the variable $S$, let $\Psi_S:\Gamma_K\rightarrow\Z_p\llbracket{S}\rrbracket^\times$ be the character given by
\[
\Psi_S(\sigma)=(1+S)^{l(\sigma)},
\]
where $l(\sigma)\in\Z_p$ is defined by $\sigma\vert_{K_{\fp^\infty}}=\gamma_\fp^{l(\sigma)}$. Consider the formal $q$-expansion
\begin{equation}\label{eq:CM-F}
\CM(S)(q):=\sum_{(\mathfrak{a},\fp\mathfrak{c})=1}\psi_0(\sigma_{\mathfrak{a}})\Psi^{-1}_S(\sigma_\mathfrak{a})q^{\mathbf{N}(\mathfrak{a})}\in\mathcal{O}\llbracket{S}\rrbracket\llbracket{q}\rrbracket,
\end{equation}
where $\sigma_\mathfrak{a}\in{\rm Gal}(K(\mathfrak{c}\fp^\infty)/K)$ is the Artin symbol of $\mathfrak{a}$. Then, for every $k\geq 2$, the specialization of $\CM$ at $S=(1+p)^{k-2}-1$ is given by the theta series
\[
\hf_k=\sum_{(\mathfrak{a},\mathfrak{pc})=1}\psi_0(\mathfrak{a})\lambda^{k-2}(\mathfrak{a})q^{\mathbf{N}(\mathfrak{a})}\in S_k(D_K\mathbf{N}(\mathfrak{c})p,\omega^{2+r_1-k}),
\]
where $\lambda$ is the unique (since $p\nmid h_K$) Hecke character of infinity type $(-1,0)$ and conductor $\fp$ whose $p$-adic avatar factors through $\Gamma_\fp$. In particular, $\hf_2$ is the ordinary $p$-stabilization of $\theta_{\psi_0}$.

\begin{remark}\label{rem:psi_0}
If $\psi$ is a Hecke character of infinity type $(1-k,0)$ as in $\S\ref{subsec:adjoint}$, then $\psi_0:=\psi\lambda^{2-k}$ is a Hecke character as above (in particular, satisfying (\ref{eq:central}) with e.g.  $r_1=k-2$), and so the  resulting $\hf_k$ recovers the $p$-stabilization of $\theta_\psi$. From now on we shall always assume that $\psi$ and $\psi_0$ are related in this manner, and refer to $\hf=\boldsymbol{\theta}_{\psi_0}$ as the CM Hida family attached to $\psi$ (or $\psi_0$).
\end{remark}

\subsection{A factorization formula}\label{subsec:factor}

In this section we prove a factorization formula relating the $p$-adic $L$-function attached to $V$ in Theorem~\ref{thm:sym} to anticyclotomic $p$-adic $L$-functions attached to the other two representations in the decomposition (\ref{eq:dec-V}).

Put $N={\rm lcm}(N_g,N_\psi)$, where $N_{\psi}:=D_K\mathbf{N}(\mathfrak{c})$. In addition to the previous hypotheses, from now on we shall also assume that:

%We now recall the other $p$-adic $L$-functions involved in the picture, beginning with the triple product $p$-adic $L$-function of Hsieh \cite{Hs}. We make the following assumptions (which add to the ones already in place):
\begin{itemize}
\item[{\rm (a)}] %for some integer $k \geq 2l$ with $k\equiv r_1+2 \pmod{2(p-1)}$ we have \[\varepsilon_\ell\left(V_{\hf_k}\otimes\ad(V_g)(1-k/2)\right)=+1\] for all primes $\ell\mid N={\rm lcm}(N_g,N_\psi)$;
$\varepsilon_\ell(V_{fgg^{\ast}})=+1$ for all primes $\ell\mid N$,
\item[{\rm (b)}] $\mathrm{gcd}(N_g,N_\psi)$ is squarefree.
\end{itemize}
With notations as in Remark~\ref{rem:shapiro}, here $\varepsilon_\ell(V_{fgg^{\ast}})$ denotes the epsilon-factor of the Weil--Deligne representation attached to the restriction of  $V_{fgg^\ast}$ to $G_{\Q_\ell}$.

Note that it follows from (\ref{eq:central}) that the Galois representation of the Hida family $\hf=\boldsymbol{\theta}_{\psi_0}$ attached to $\psi$ is residually irreducible and $p$-distinguished (see also \cite[Rem.~5.1.3]{LLZ}). For the following result, we adopt the definition of congruence ideal in \cite[\S3.3]{Hs}.

\begin{theorem}\label{thm:hsieh}
Under the above hypotheses, there exists an element
\[
\mathscr{L}_p({\rm ad}(g_K)\otimes\psi_0)\in{\rm Frac}\,\Lambda^{\rm ac}
\]
such that for all characters $\xi$ of $\Gamma^{\rm ac}$ crystalline at both $\fp$ and $\bar{\fp}$ and corresponding to a Hecke character of infinity type $(-n,n)$ with $n\equiv r_1/2\pmod{p-1}$ and $n\geq l-1$, we have
\[
\mathscr{L}_p({\rm ad}(g_K)\otimes\psi_0)(\xi)^2 =\Gamma(n,l,l)\cdot\frac{\mathcal{E}_p({\rm ad}(g),\psi_0\xi)^2} {\mathcal{E}_0(\psi_0\xi)^2\cdot\mathcal{E}_1(\psi_0\xi)^2}\cdot\prod_{\ell\mid N}\tau_\ell\cdot\frac{L({\rm ad}(g_K)\otimes\psi_0^{-1}\xi^{-1}\omega^{n},0)}{(2\pi i)^{4n+4}\cdot\langle\theta_{\psi_0\xi_n},\theta_{\psi_0\xi_n}\rangle^2},
\]
where:
\begin{itemize}
    %\item $c'=(k'+2l-2)/2$,
	\item $\Gamma(n,l,l)=(n+l-1)!\cdot (n!)^2\cdot(n-l+1)!$,
	\item $\mathcal{E}_p({\rm ad}(g),\psi_0\xi)=\left(1-\frac{\alpha_g(\psi_0\xi\omega^{-n})(\fp)}{\beta_g p}\right) \cdot\left(1-\frac{(\psi_0\xi\omega^{-n})(\fp)}{p}\right)^2\cdot\left(1-\frac{\beta_g (\psi_0\xi\omega^{-n})(\fp)}{\alpha_g p}\right)$,
\item
$\mathcal{E}_0(\psi_0\xi)=\left(1-\frac{(\psi_0\xi\omega^{-n})(\fp)}{(\psi_0\xi\omega^{-n})(\bar\fp)}\right)$,
$\mathcal{E}_1(\psi_0\xi)=\left(1-\frac{(\psi_0\xi\omega^{-n})(\fp)}{p(\psi_0\xi\omega^{-n})(\bar\fp)}\right)$,
\item $\tau_\ell$ is an explicit nonzero rational number independent of $n$,
\item $\theta_{\psi_0\xi_n}$ is the theta series of weight $2n+2$ attached to $\psi_0\xi_n:=\psi_0\xi\omega^{-n}\vert\cdot\vert^{-n}$.
\end{itemize}
Moreover, if $\mathcal{H}$ is any generator of the congruence ideal of $\boldsymbol{\theta}_{\psi_0}$, then  $\mathcal{H}\cdot\mathscr{L}_p({\rm ad}(g_K)\otimes\psi_0)$ belongs to $\Lambda^{\rm ac}$.
\end{theorem}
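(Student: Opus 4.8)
\emph{Plan of proof.} The plan is to obtain $\mathscr{L}_p({\rm ad}(g_K)\otimes\psi_0)$ as a repackaging of the triple product $p$-adic $L$-function $\mathscr{L}_p(\breve{\hf},\breve{g},\breve{h})$ of $\S\ref{section:bottomclass}$, applied to the Hida family $\hf=\boldsymbol{\theta}_{\psi_0}$ of $\S\ref{subsec:CM-Hida}$ (which is $p$-ordinary since $\psi$ has conductor prime to $p$), with the other two forms specialized to $g$ and $g^*$ of weight $l$. First I would verify that hypotheses (a) and (b) are exactly the local conditions under which Hsieh's construction and interpolation formula \cite[\S3.5-6 and Thm.~7.1]{Hs} apply to this triple: via the Shapiro isomorphism of Remark~\ref{rem:shapiro}, the requirement that all finite local $\varepsilon$-factors of the triple product representation $\bb{V}(\hf,g,g^*)$ be $+1$ is automatic outside $pN$, holds at $p$ by ordinarity, and at $\ell\mid N$ reads $\varepsilon_\ell(\Vrep)=+1$; and the squarefreeness of $\gcd(N_g,N_\psi)$ guarantees that at the primes dividing both conductors there is no ``deeply ramified'' local configuration (at such a prime at least one of the three local components is minimally ramified), so that Hsieh's choice of test vectors and local zeta integral computations are available. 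Granting this, $\mathscr{L}_p(\breve{\hf},\breve{g},\breve{h})$ is a well-defined element of $\Lambda_{\boldsymbol{\theta}_{\psi_0}}[1/p]$.

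Because $p\nmid h_K$ and $(p)=\fp\overline{\fp}$ splits, the composite $\Gamma_\fp\hookrightarrow\Gamma_K\twoheadrightarrow\Gamma^{\rm ac}$ is an isomorphism, and by construction $\boldsymbol{\theta}_{\psi_0}$ is defined over $\mathcal{O}\dBr{S}=\mathcal{O}\dBr{\Gamma_\fp}$; this identifies $\Lambda_{\boldsymbol{\theta}_{\psi_0}}$ with $\Lambda^{\rm ac}$, and I would then \emph{define} $\mathscr{L}_p({\rm ad}(g_K)\otimes\psi_0)$ to be the image of $\mathscr{L}_p(\breve{\hf},\breve{g},\breve{h})$ under it. Tracking the identification on arithmetic points, the weight $k'$ specialization of $\boldsymbol{\theta}_{\psi_0}$ is, up to a twist by a character inflated from $\Q$ (invisible anticyclotomically), the theta series $\theta_{\psi_0\xi_n}$ with $k'=2n+2$ attached to the anticyclotomic character $\xi$ of infinity type $(-n,n)$; the equivalence $k'\geq 2l\Leftrightarrow n\geq l-1$ and the congruence $n\equiv r_1/2\pmod{p-1}$ (from the location of the classical points at $S=(1+p)^{k'-2}-1$ and the Teichm\"uller normalization) account for the range of $\xi$ in the statement.

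The interpolation formula is then obtained by unwinding Hsieh's. At an arithmetic point, $\mathscr{L}_p(\breve{\hf},\breve{g},\breve{h})(k')$ equals the Petersson ratio $\langle\breve{\hf}_{k'}^w,\delta^t\breve{g}\times\breve{h}\rangle/\langle\breve{\hf}_{k'}^w,\breve{\hf}_{k'}^w\rangle$ of \cite[Thm.~A]{BSV}, whose square is computed by \cite[Thm.~7.1]{Hs}: via Ichino's triple product formula and the evaluation of the local zeta integrals (the archimedean one as in \cite{EL}), it equals $\Gamma(n,l,l)$ times a $p$-adic multiplier times nonzero bad-prime rationals $\tau_\ell$ times $L(\theta_{\psi_0\xi_n}\otimes g\otimes g^*,c')/\langle\theta_{\psi_0\xi_n},\theta_{\psi_0\xi_n}\rangle^2$. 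Artin formalism for $\theta_{\psi_0\xi_n}$, the self-duality of ${\rm ad}(V_g)$, and the conventions relating a Hecke character to its $p$-adic avatar and the Kummer self-dual twist then turn the global $L$-value into $L({\rm ad}(g_K)\otimes\psi_0^{-1}\xi^{-1}\omega^n,0)/(2\pi i)^{2n+2}$. The step I expect to be most delicate is the $p$-adic multiplier: one substitutes the Frobenius parameters $(\psi_0\xi\omega^{-n})(\fp)$, $(\psi_0\xi\omega^{-n})(\overline{\fp})$ of $\theta_{\psi_0\xi_n}$ — whose ramification at $\fp$ makes the local component at $p$ a ramified principal series — into Hsieh's $\hf$-unbalanced $p$-factor, simplifies using $\alpha_g\beta_g=\chi_g(p)p^{l-1}$ and $g^*=g\otimes\chi_g^{-1}$ to get $\mathcal{E}_p({\rm ad}(g),\psi_0\xi)^2$, and extracts the denominator $\mathcal{E}_0(\psi_0\xi)^2\mathcal{E}_1(\psi_0\xi)^2$ from the discrepancy between the Petersson norms of the $p$-stabilized $\theta_{\psi_0\xi_n}$ and of its associated newform.

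Finally, for the integrality assertion: since by (\ref{eq:central}) the residual representation of $\boldsymbol{\theta}_{\psi_0}$ is irreducible and $p$-distinguished, the usual control arguments apply, and the only denominators of $\mathscr{L}_p(\breve{\hf},\breve{g},\breve{h})$ beyond $\Lambda_{\boldsymbol{\theta}_{\psi_0}}$ come from the $\boldsymbol{\theta}_{\psi_0}$-isotypic projection implicit in its construction (the operator $\mathtt{s}_{\hf\hg\hh}$ and the passage to the isotypic quotient $\bb{V}_\hf$), the factor $a_p(\boldsymbol{\theta}_{\psi_0})^{-1}$ being a unit by ordinarity; these are bounded by the congruence ideal of $\boldsymbol{\theta}_{\psi_0}$, a nonzero principal ideal. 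Hence multiplication by any generator $\mathcal{H}$ lands in $\Lambda^{\rm ac}$, as claimed.
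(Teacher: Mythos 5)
Your proposal is correct and follows essentially the same route as the paper: the theorem is deduced by applying Hsieh's Theorem~A to the triple $(\boldsymbol{\theta}_{\psi_0},g,g^*)$, identifying the weight variable of the CM family with the anticyclotomic variable (the paper uses the map $\gamma_\fp\mapsto\gamma_\fp^{\tau-1}$ rather than the natural projection $\Gamma_\fp\to\Gamma^{\rm ac}$, a harmless normalization difference), and invoking Hsieh's integrality statement for the congruence-ideal assertion. Your discussion of why hypotheses (a)--(b) match Hsieh's local conditions and of the unwinding of the $p$-adic multiplier is more detailed than the paper's (deliberately terse) proof, but the underlying argument is the same.
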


\begin{proof}
This is essentially a reformulation of \cite[Thm.~A]{Hs} specialized to our setting. Let $\hf=\CM$ be the Hida family attached to the Hecke character $\psi_0$ as in (\ref{eq:CM-F}), with associated big Galois representation $\mathbb{V}_{\hf}$, and denote by $\mathbb{V}(\hf,g,g^*)$ the Kummer self-dual twist of the triple tensor product $\mathbb{V}_{\hf}\hat\otimes_{\mathcal{O}}T_g\otimes_{\mathcal{O}}T_{g^*}$ introduced in \cite[\S{7.1}]{ACR} (and recalled in $\S\ref{section:bottomclass}$ above).
Since $\mathbb{V}_{\hf}\simeq{\rm Ind}_K^\Q(\psi_0^{-1}\Psi_{S})$, we immediately find that
\begin{align*}
\mathbb{V}(\hf,g,g^*)
%&=({\rm Ind}_K^\Q\psi_0\Psi_S^{-1})\hat{\otimes}_{\mathcal{O}}T_g\otimes_{\mathcal{O}}T_{g^\ast}(1-l)\otimes(\Psi_S^{1/2}\circ\mathscr{V}),\\
&\simeq{\rm ad}(T_g)\otimes{\rm Ind}_K^{\Q}(\psi_0^{-1}\omega^{r_1/2}\Psi_S^{(1-\tau)/2}),
\end{align*}
where for a character $\chi$ of $G_K$ we denote by $\chi^\tau$ the composition of $\chi$ with the action of the non-trivial automorphism $\tau$ of $K/\Q$, and put $\chi^{1-\tau}:=\chi(\chi^\tau)^{-1}$.
%where $\mathscr{V}:G_\Q^{\rm ab}\rightarrow G_K^{\rm ab}$ is the transfer map.

By \cite[Thm.~A]{Hs}, attached to $(\hf,g,g^\ast)$ (and a specific choice of level-$N$ test vectors for this triple), there is an ``unbalanced'' triple product $p$-adic $L$-function $\mathscr{L}_p(\hf,g,g^*)\in{\rm Frac}\,\mathcal{O}\llbracket\Gamma_\fp\rrbracket$ interpolating, for all $k'\equiv r_1+2\pmod{2(p-1)}$ with $k'\geq 2l$, the (central) values at $s=0$ of the triple product $L$-function
\[
L(\mathbb{V}(\hf_k,g,g^*),s)=L({\rm ad}(g_K)\otimes\psi_0^{-1}\xi^{-1}\omega^{r_1/2},s).
\]
where we put $\xi$ to denote the specialization of $\Psi_{S}^{(\tau-1)/2}$ at $S=(1+p)^{k'-2}-1$, so $\xi^{-1}$ is a character of $\Gamma^{\rm ac}$ crystalline at both $\fp$ and $\bar{\fp}$ corresponding to a Hecke character of infinity type $(-(k'/2-1),k'/2-1)$. Taking $\mathscr{L}_p({\rm ad}(g_K)\otimes\psi_0)$ to be the image of $
\mathscr{L}_p(\hf,g,g^\ast)$ under the
map ${\rm Frac}\,\mathcal{O}\llbracket\Gamma_\fp\rrbracket\rightarrow{\rm Frac}\,\Lambda^{\rm ac}$ determined by $\gamma_\fp\mapsto\gamma_\fp^{\tau-1}$, we thus see that the result follows from \cite[Thm.~A]{Hs}.
\end{proof}

%From now, we put
%\[
%L_p(\hf,g,g^\ast):=\mathscr{L}_p(\breve{\hf}, \breve{g}, \breve{g}^\ast)^2\nonumber
%\]
%for the test vectors $(\breve{\hf},\breve{g},\breve{g}^\ast)$ of level $N$ provided by Theorem~\ref{thm:hsieh}.

%Let $\mathfrak c \subset \mathcal O_K$ be an integral ideal of the imaginary quadratic field $K$.

We next discuss an anticyclotomic $p$-adic $L$-function associated with $V'$, arising from a suitable restriction of Katz's $p$-adic $L$-function.

Denote by $\Sigma$ the set of algebraic Hecke characters $\xi$ of $K$ for which $s=0$ is a critical point for $L(\xi,s)$ in the sense of Deligne. This set can be written as the disjoint union $\Sigma = \Sigma_{\fp} \cup \Sigma_{\bar{\fp}}$, where
\begin{align*}
 \Sigma_\fp &= \{ \xi \in \Sigma \text{ of infinity type } (a,b), \text{ with } a \geq 1,\, b \leq 0 \},\\
 \Sigma_{\bar{\fp}} &= \{ \xi \in \Sigma \text{ of infinity type } (a,b), \text{ with } a \leq 0,\, b \geq 1 \}.
\end{align*}
Note that the involution $\xi\mapsto\xi^\tau$
%, where $\xi'$ is the composition of $\xi$ with complex conjugation,
takes characters in $\Sigma_\fp$ to characters in $\Sigma_{\bar\fp}$, and vice versa.
%interchanges the regions $\Sigma_{\rm crit}^{(1)}$ and $\Sigma_{\rm crit}^{(2)}$.

%Thus $\xi \in \Sigma_K^{\text{crit}}$ if and only if $s=0$ is a critical point for $L(\xi,s)$. As an extra piece of notation, let $\hat{\Sigma}_K$ denote the completion of $\Sigma_K^{(2)}$ with respect to the compact open topology of $\mathcal O_{L_p}$-valued functions of a certain subset of $\mathbb A_K^{\times}$, as described in \cite[\S5.2]{BDP} (the convention that we use for the infinity type of a Hecke character is the opposite to the one in [\emph{op.\,cit.}]).

Let $G_\mathfrak{c}={\rm Gal}(K(\mathfrak{c}p^\infty)/K)$ be the Galois group of the ray class field of $K$ of conductor $\mathfrak{c}p^\infty$, and denote by $\Z_p^{\rm ur}$ the completion of the ring of integers of the maximal unramified extension of $\Q_p$. The following result is originally due to Katz.

\begin{theorem}\label{thm:katz}
There exists an element $\mathscr{L}^{\rm Katz}_{\fp,\mathfrak{c}}(K)\in\Z_p^{\rm ur}\llbracket{G}_{\mathfrak{c}}\rrbracket$ uniquely characterized by the property that for every character of $\Gamma_\mathfrak{c}$ corresponding to a Hecke character $\xi\in\Sigma_\fp$ of infinity type $(k_1,k_2)$ and conductor dividing $\mathfrak{c}$ we have
\[
\mathscr{L}_{\fp,\mathfrak{c}}^{\rm Katz}(K)({\xi})=\biggl(\frac{\Omega_p}{\Omega_\infty}\biggr)^{k_1-k_2}(k_1-1)!\cdot\biggl(\frac{\sqrt{D_K}}{2\pi}\biggr)^{k_2}\cdot(1-p^{-1}\xi^{-1}({\fp})p^{-1})(1-\xi(\bar{\fp}))\cdot L_\mathfrak{c}(\xi,0),
\]
where $L_{\mathfrak{c}}(\xi,s)$ is the $L$-function of $\xi$ with the Euler factors at the primes $\mathfrak{l}\vert\mathfrak{c}$ removed.
%Similarly, there exists an element $L_{\fp}^{\rm Katz}(K)\in\Lambda_K^{\rm ur}$ uniquely characterized by the property that for every character of $\Gamma_K$ corresponding to a Hecke character $\xi\in\Sigma_{\rm crit}^{(1)}$ of infinity type $(k_2,k_1)$ we have
%\[
%L_{\fp}^{\rm Katz}(K)({\xi})=\frac{\Omega_p^{k_1-k_2}}{\Omega_\infty^{k_1-k_2}}\cdot(k_1-1)!\cdot\biggl(\frac{\sqrt{D_K}}{2\pi}\biggr)^{k_2}\cdot(1-p^{-1}\xi^{-1}(\bar{\fp}))(1-\xi(\fp))\cdot L(\xi,0).
%\]
Moreover, we have the functional equation
\[
\mathscr{L}_{\fp,\mathfrak{c}}^{\rm Katz}(K)({\xi})=\mathscr{L}_{\fp,\bar{\mathfrak{c}}}^{\rm Katz}(K)(\xi^{-\tau}\mathbf{N}^{-1}),
\]
where the equality is up to a $p$-adic unit.
\end{theorem}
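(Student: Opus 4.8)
The plan is to derive the theorem from Katz's construction together with two soft arguments: a density argument for uniqueness, and a transfer of the complex functional equation through that uniqueness. First I would recall that the existence of an element of $\Z_p^{\rm ur}\dBr{G_\mathfrak{c}}$ with the displayed interpolation property is due to Katz \cite{Katz} (see also \cite{dS}, \cite{HT}, \cite{BDP12}): one constructs a measure on $G_\mathfrak{c}$ from the $p$-adic interpolation of Eisenstein series, evaluates it at CM points, and compares the resulting $p$-adic quantities with the complex critical Hecke $L$-values via a Damerell-type formula, dividing by the appropriate powers of the CM periods $\Omega_p$ and $\Omega_\infty$. In the normalisation of \cite{BDP12} this yields precisely the stated formula; the modification factor $(1-\xi(\bar{\fp}))$ reflects the split behaviour $(p)=\fp\overline{\fp}$, while $(1-p^{-1}\xi^{-1}(\fp)p^{-1})$ records the ordinary $p$-stabilisation built into the Eisenstein measure. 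The only labour in this step is the bookkeeping matching the periods and the archimedean factors $(k_1-1)!\,(\sqrt{D_K}/2\pi)^{k_2}$ to those of \cite{Katz}, which I would cite rather than redo.

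Next I would establish uniqueness. The characters of $G_\mathfrak{c}$ attached to Hecke characters $\xi\in\Sigma_\fp$ of conductor dividing $\mathfrak{c}$ form a set that is dense in the rigid analytic character variety of $G_\mathfrak{c}$; in particular every continuous $p$-adic character of $G_\mathfrak{c}$ is a limit of characters in this family. Since $\Z_p^{\rm ur}\dBr{G_\mathfrak{c}}$ is $p$-adically separated and its elements define bounded rigid analytic functions on that variety, an element vanishing on this dense locus must vanish; hence the interpolation property characterises $\mathscr{L}^{\rm Katz}_{\fp,\mathfrak{c}}(K)$ uniquely.

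For the functional equation I would argue as follows. The assignment $\xi\mapsto\xi^{-\tau}\mathbf{N}^{-1}$ carries characters of conductor dividing $\mathfrak{c}$ to characters of conductor dividing $\bar{\mathfrak{c}}$ and induces a continuous isomorphism of the relevant Iwasawa algebras, so the pullback of $\mathscr{L}^{\rm Katz}_{\fp,\bar{\mathfrak{c}}}(K)$ along this substitution is a well-defined element of $\Z_p^{\rm ur}\dBr{G_\mathfrak{c}}$. By the uniqueness just proved it suffices to compare the two sides at the characters attached to $\xi\in\Sigma_\fp$ of conductor dividing $\mathfrak{c}$ (and, if convenient, only those for which $\xi^{-\tau}\mathbf{N}^{-1}$ also lies in the interpolation range, still a dense set). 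There both sides are given explicitly, and the identity up to a $p$-adic unit follows from the functional equation of the complex Hecke $L$-function $L(\xi,s)$, which relates $L(\xi,0)$ to $L(\xi^{-\tau}\mathbf{N}^{-1},0)$ up to an archimedean constant and a product of local root numbers, once one verifies that the period ratios $(\Omega_p/\Omega_\infty)^{k_1-k_2}$, the archimedean Gamma factors, the $p$-adic modification factors, and the Euler factors removed at primes dividing $\mathfrak{c}$ versus $\bar{\mathfrak{c}}$ either cancel or contribute only $p$-adic units.

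The hard part will not be any of the conceptual steps — these are the standard density and uniqueness formalism — but rather keeping the normalisations honest: one must track the CM periods, the complex and $p$-adic archimedean factors, and the finite list of omitted Euler factors precisely enough that the two interpolation formulae coincide on the nose up to a $p$-adic unit under $\xi\mapsto\xi^{-\tau}\mathbf{N}^{-1}$. This is where the sign conventions for infinity types and the precise shape of the local root numbers at ramified primes must be handled with care, and I would lean on the treatments of Katz's $p$-adic $L$-function in \cite{dS}, \cite{HT} and \cite{BDP12} to carry it through.
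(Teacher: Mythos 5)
Your proposal is correct and follows essentially the same route as the paper, which simply cites de Shalit (Thm.~II.4.14 for the construction of the measure $\mu(\mathfrak{c}\bar{\fp}^\infty)$ giving the interpolation formula, and Thm.~II.6.4 for the functional equation). The density/uniqueness formalism and the transfer of the complex functional equation that you sketch are exactly the standard arguments underlying those references, so fleshing them out is fine but not required beyond the citation.
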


\begin{proof}
See \cite[Thm.~II.4.14]{deshalit} for a construction of $\mathscr{L}_{\fp,\mathfrak{c}}^{\rm Katz}(K)$ (corresponding to the measure on $G_\mathfrak{c}$ denoted by $\mu(\mathfrak{c}\bar{\fp}^\infty)$ in [\emph{loc.\,cit.}]), and \cite[Thm.~II.6.4]{deshalit} for the functional equation.
\end{proof}

Assume that $\mathfrak{c}$ is fixed under complex conjugation, i.e., $\bar{\mathfrak{c}}=\mathfrak{c}$. Denote by $\Delta_{\mathfrak{c}}$ the torsion subgroup of $G_\mathfrak{c}$, and put $\Gamma_K:=G_{\mathfrak{c}}/\Delta_\mathfrak{c}\simeq\Z_p^2$, which is identified with the Galois group of the unique $\Z_p^2$-extension $K_\infty/K$. We fix a decomposition %(non-canonical in general)
\begin{equation}\label{eq:factor-G}
G_{\mathfrak{c}}\simeq\Delta_{\mathfrak{c}}\times\Gamma_K.
\end{equation}
Put $\bar{\psi}_0=\psi_0\vert_{\Delta_\mathfrak{c}}$ and $\bar{\psi}_0^-=\bar{\psi}_0^{\tau-1}$, and note that the latter defines a finite order anticyclotomic Hecke character of conductor dividing $\mathfrak{c}p^s$ for some $s\geq 0$. Denote by $\mathscr{L}_{\fp,{\psi}_0}^{\rm Katz}(K)^-$ the image of $\mathscr{L}_{\fp,\mathfrak{c}}^{\rm Katz}(K)$ under the composite map
\[
\Z_p^{\rm ur}\llbracket{G_\mathfrak{c}}\rrbracket\rightarrow\Z_p^{\rm ur}\llbracket{\Gamma_K}\rrbracket\rightarrow\Z_p^{\rm ur}\llbracket\Gamma^\mathrm{ac}\rrbracket,
\]
where the first arrow is the projection defined by $\bar{\psi}_0^-$ and the second arrow is given by $\gamma\mapsto\gamma^{\tau-1}$ for $\gamma\in
\Gamma_K$.

From now on we shall assume that the above $\mathfrak{c}$ and $\psi_0$ satisfy the conditions (H1)--(H4) in the following result.
%\begin{propo}\label{katz-interpolation}
%There exists a $p$-adic analytic function
%\[ L_{\fp}^{\mathrm{Katz}}(K): \hat{\Sigma}_K \longrightarrow \mathbb C_p \]
%uniquely determined by the following interpolation property: if $\xi \in \Sigma^{(2)}_K$ is a character of infinity type $(k_1,k_2)$, then we have
%\[ \frac{L_{\fp}^{\mathrm{Katz}}(K)(\xi)}{\Omega_p^{k_1-k_2}} = \mathfrak a(\xi) \times \mathfrak e(\xi) \times \mathfrak f(\xi) \times \frac{L_{\mathfrak c}(\xi,0)}{\Omega^{k_1-k_2}}, \]
%with both sides lying in $\overline{\mathbb Q}$, where
%\begin{itemize}
%\item $\mathfrak a(\xi) = (k_1-1)! \pi^{-k_2}$,
%\item $\mathfrak e(\xi) = (1- p^{-1} \xi^{-1}(\fp))(1-\xi(\bar{\fp}))$,
%\item $\mathfrak f(\xi) = D_K^{k_2/2} 2^{-k_2}$,
%\item $L_{\mathfrak c}(\xi,s)$ is Hecke's $L$-function associated with $\xi$ with the Euler factors at primes dividing $\mathfrak c$ removed.
%\end{itemize}
%\end{propo}

\begin{proposition}\label{prop:congr}

In addition to {\rm (H0)}, assume that:
\begin{itemize}
\item[{\rm (H1)}] $\mathfrak{c}$ is only divisible by primes that are split in $K$;
\item[{\rm (H2)}] $\bar{\psi}_0^-$ has order prime-to-$p$ and the prime-to-$p$ part of its conductor it exactly $\mathfrak{c}$;
\item[{\rm (H3)}] $\bar{\psi}_0^-\vert_{G_{K_v}}\neq 1$ for all primes $v\vert p$ in $K$;
\item[{\rm (H4)}] $\bar{\psi}_0^-$ has order at least $3$.
\end{itemize}
Then, as an ideal of $\Z_p^{\rm ur}\dBr{\Gamma^{\rm ac}}$, the congruence ideal $C(\CM)$ is generated by
\[
h_K\cdot\mathscr{L}_{\fp,{\psi}_0}^{\rm Katz}(K)^-
\]
where $h_K$ is the class number of $K$.
\end{proposition}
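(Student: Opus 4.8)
The plan is to deduce this from the computation, due to Hida and Tilouine, of the congruence module of a $p$-ordinary CM family in terms of an anticyclotomic branch of Katz's $p$-adic $L$-function; the hypotheses {\rm (H0)}--{\rm (H3)} are precisely those under which that computation applies. To set up, I recall that the $p$-stabilized theta family $\CM$ of $(\ref{eq:CM-F})$ cuts out a CM component of the big ordinary cuspidal Hecke algebra $\mathbb{T}$ of tame level $N_\psi=D_K\mathbf{N}(\mathfrak{c})$ with nebentypus determined by $\psi_0$; since $p\nmid h_K$ this family is parametrized by $\Gamma_\fp\simeq\Z_p$ and has branch ring $\mathcal{O}\dBr{\Gamma_\fp}$, and I write $\lambda_{\CM}\colon\mathbb{T}\rightarrow\mathcal{O}\dBr{\Gamma_\fp}$ for the corresponding homomorphism. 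By definition $C(\CM)$ is obtained from $\lambda_{\CM}\bigl(\mathrm{Ann}_{\mathbb{T}}(\ker\lambda_{\CM})\bigr)$ --- the annihilator of the congruence module measuring congruences between $\CM$ and the rest of $\mathbb{T}$ --- by transport along $\gamma_\fp\mapsto\gamma_\fp^{\tau-1}$ (as in the proof of Theorem~\ref{thm:hsieh}) and extension of scalars to $\Z_p^{\rm ur}\dBr{\Gamma^{\rm ac}}$. Hypothesis {\rm (H1)} forces the local components of $\theta_{\psi_0}$ at the primes dividing $\mathfrak{c}$ to be principal series rather than supercuspidal, and {\rm (H2)} makes the projection to the $\bar\psi_0$-branch the choice of a ring-direct factor of $\Z_p^{\rm ur}\dBr{G_\mathfrak{c}}$; together these put us in the situation where the relevant localization of $\mathbb{T}$ is Gorenstein, so the congruence module is cyclic and $C(\CM)$ is principal.

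The heart of the argument is the factorization of Galois representations
\[
{\rm ad}^0(\theta_{\psi_0})\simeq\varepsilon_K\oplus\Ind_K^{\Q}(\psi_0^{1-\tau}),\qquad \psi_0^{1-\tau}=\psi_0(\psi_0^\tau)^{-1},
\]
in which $\psi_0^{1-\tau}$ is anticyclotomic. The $\Ind$-summand contributes the anticyclotomic Hecke $L$-values $L\bigl(\psi_0^{1-\tau}\cdot(\text{anticyclotomic}),0\bigr)$, which by Theorem~\ref{thm:katz} are interpolated over $\Z_p^{\rm ur}$ by the branch of Katz's $p$-adic $L$-function obtained by composing $\mathscr{L}_{\fp,\mathfrak{c}}^{\rm Katz}(K)$ with the torsion projection $\bar\psi_0^-$ and the free projection $\gamma\mapsto\gamma^{\tau-1}$ of $\S\ref{subsec:CM-Hida}$ --- that is, with $\mathscr{L}_{\fp,\psi_0}^{\rm Katz}(K)^-$. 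Under {\rm (H3)}, i.e. $\bar\psi_0^-$ of order at least $3$, which rules out the exceptional configurations (a second CM companion family, or a genus-theoretic degeneracy of $\psi_0^{1-\tau}$), the Hida--Tilouine computation identifies the above congruence module, after extension of scalars to $\Z_p^{\rm ur}$, with $\Z_p^{\rm ur}\dBr{\Gamma^{\rm ac}}/\bigl(\mathscr{L}_{\fp,\psi_0}^{\rm Katz}(K)^-\bigr)$; tracking the transport along $\gamma_\fp\mapsto\gamma_\fp^{\tau-1}$ then gives $C(\CM)=\bigl(\mathscr{L}_{\fp,\psi_0}^{\rm Katz}(K)^-\bigr)$ up to a unit.

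It remains to account for the constant $h_K$. The Dirichlet summand $\varepsilon_K$ in the adjoint factorization contributes the value $L(\varepsilon_K,1)$, which by the analytic class number formula is $2\pi h_K/(w_K\sqrt{D_K})$ up to the CM periods $\Omega_\infty,\Omega_p$ that normalize the comparison between the periods of Hida--Tilouine and those in Theorem~\ref{thm:katz}; since $p$ is odd and split in $K$ one has $p\nmid 2D_Kw_Kh_K$, so every factor is a $p$-adic unit and the ideal is unchanged, but I would keep $h_K$ to record the expected shape, obtaining $C(\CM)=\bigl(h_K\cdot\mathscr{L}_{\fp,\psi_0}^{\rm Katz}(K)^-\bigr)$ in $\Z_p^{\rm ur}\dBr{\Gamma^{\rm ac}}$.

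The genuine difficulty is entirely contained in the cited Hida--Tilouine computation: the Gorenstein/cyclicity input for the CM component, Hida's control theorem, and the non-vanishing needed (via {\rm (H3)}) to see that the characteristic element of the congruence module is exactly the Katz branch and nothing larger. In the present write-up the remaining work is bookkeeping --- verifying that {\rm (H1)}--{\rm (H3)} are precisely the hypotheses of the cited theorem, possibly after a routine extension of it to the level structure here (conductor $\mathfrak{c}\fp^e$ with $e\in\{0,1\}$), and matching periods and branches so that the abstract characteristic element is recognized as the $\mathscr{L}_{\fp,\psi_0}^{\rm Katz}(K)^-$ of $\S\ref{subsec:CM-Hida}$.
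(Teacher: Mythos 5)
Your proposal is correct and follows essentially the same route as the paper: the paper's proof is a two-line citation to Hida--Tilouine and Hida (the congruence power series $H(\CM)$ of \cite{hida-coates}, whose identification with the anticyclotomic Katz $p$-adic $L$-function under hypotheses matching {\rm (H0)}--{\rm (H3)} is exactly the content of the anticyclotomic main conjecture for CM families), and your write-up correctly unpacks what that citation contains, including the adjoint factorization and the class-number-formula origin of the $h_K$ factor.
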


\begin{proof}
A generator of $C(\CM)$ is given by a congruence power series $H(\CM)$ attached to $\CM$ as in \cite{hida-coates}. By our assumptions, this $H(\CM)$ corresponds to a branch character satisfying the hypotheses (1)--(4) in \cite[p.~466]{hida-coates}, so as noted in p.~469 of [\emph{op.\,cit.}], the result follows from the proof of the anticyclotomic Iwasawa main conjecture by Hida--Tilouine \cite{HT-ENS,HT-117} and Hida \cite{hida-coates}.
\end{proof}

\begin{definition}\label{triple:modified}
Put
\[
L_p({\rm ad}(g_K)\otimes\psi_0):=\bigl(\mathscr{L}_p({\rm ad}(g_K)\otimes\psi_0)\cdot h_K\cdot\mathscr{L}_{\fp,{\psi}_0}^{{\rm Katz}}(K)^-\bigr)^2,
\]
which by Theorem~\ref{thm:hsieh} and Proposition~\ref{prop:congr} defines an element in $\Z_p^{\rm ur}\dBr{\Gamma^{\rm ac}}$.
\end{definition}

We can now derive an anticyclotomic analogue of Dasgupta's factorization \cite[Thm.~1]{Das}, relating the $p$-adic $L$-function of Theorem~\ref{thm:hsieh} to the product of the $p$-adic $L$-functions in Theorem~\ref{thm:sym} and Theorem~\ref{thm:katz}. Similarly as in [\emph{loc.\,cit.}], %(and with notations as in the proof of Theorem~\ref{thm:hsieh}),
our result is a $p$-adic analogue of the factorization of complex $L$-functions
\[
L({\rm ad}(g_K)\otimes\chi,s)=L({\rm ad}^0(g_K)\otimes\chi,s)\cdot L(\chi,s)
\]
arising from the decomposition of $G_K$-representations
\[
{\rm ad}(V_g)\otimes\chi\simeq({\rm ad}^0(V_g)\otimes\chi)\oplus\chi.
\]
However, our proof is largely simplified by the fact that the three $p$-adic $L$-functions involved have a Zariski dense overlapping set of characters in the range of interpolation.

Our factorization formula will in fact involve the following anticyclotomic projection of the Katz $p$-adic $L$-function.

\begin{definition}\label{def:Katz-}
Viewing $\psi_0$ as a character of $G_\mathfrak{c}$, write $\psi_0=\bar{\psi}_{0}\cdot\psi_{\Gamma}$ according to the factorization $(\ref{eq:factor-G})$, with $\bar{\psi}_{0}$ (resp. $\psi_{\Gamma}$) a character of $\Delta_{\mathfrak{c}}$ (resp. $\Gamma_K$). We denote by $\mathscr{L}_{\fp}^{\rm Katz}(\psi_0)^{-,\iota}\in\Z_p^{\rm ur}\dBr{\Gamma^{\rm ac}}$ the image of $\mathscr{L}_{\fp,\mathfrak{c}}^{\rm Katz}(K)$ under the composite map
\[
\Z_p^{\rm ur}\llbracket G_{\mathfrak{c}}\rrbracket\rightarrow\Z_p^{\rm ur}\llbracket\Gamma_K\rrbracket\rightarrow\Z_p^{\rm ur}\llbracket\Gamma_K\rrbracket\rightarrow\Z_p^{\rm ur}\dBr{\Gamma^{\rm ac}}\rightarrow\Z_p^{\rm ur}\dBr{\Gamma^{\rm ac}},
\]
where the first arrow is given by the projection defined by $\bar{\psi}_0^{-1}\omega^{r_1/2}$, the second by twisting by $\psi_\Gamma^{-1}$, the third is the natural projection, and the last arrow is the involution given by $\gamma\mapsto\gamma^{-1}$ for $\gamma\in\Gamma^{\rm ac}$. In other words, $\mathscr{L}_{\fp}^{\rm Katz}(\psi_0)^{-,\iota}$ is the element of $\Z_p^{\rm ur}\dBr{\Gamma^{\rm ac}}$ defined by
\[
\mathscr{L}_{\fp}^{\rm Katz}(\psi_0)^{-,\iota}(\xi)=\mathscr{L}_{\fp,\mathfrak{c}}^{\rm Katz}(K)(\psi_0^{-1}\xi^{-1}\omega^{r_1/2})
\]
for all characters $\xi$ of $\Gamma^{\rm ac}$.
\end{definition}

Denote by $\tau_N$ the product of constants $\prod_{\ell\vert N}\tau_\ell$ appearing in Theorem~\ref{thm:hsieh}.
%and let $h_K$ be the class number of $K$.

\begin{theorem}\label{thm:factor}
%There exists an admissible function $\mathfrak f(k')$ such that
%or all weights with $k' \equiv k \pmod{2(p-1)}$
The following equality holds
\[
 L_p({\rm ad}(g_K)\otimes\psi_0)=u\cdot L_p({\rm ad}^0(g_K)\otimes\psi_0)\cdot\mathscr{L}_{\fp}^{\rm Katz}(\psi_0)^{-,\iota}\cdot\tau_N
\]
%\[ \Lp(\hf,g,g^\ast)(k')^2 \cdot\mathscr{L}_{\fp,\mathfrak{c}}^{\mathrm{Katz}}(K)(\psi_{k'}'/\psi_{k'})^2\cdot h_K^2 = u \cdot L_p(\hf, \ad^0(g))(k') \cdot L_{\fp,\mathfrak{c}}^{\mathrm{Katz}}(K)(\psi_{k'}'\mathbf{N}^{-k'/2}) \cdot\tau_N \]
where $u$ is a unit in $(\Lambda^{\rm ac})^\times$.
\end{theorem}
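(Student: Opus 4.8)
The plan is to prove the identity by evaluating both sides at a Zariski-dense set of characters lying in the common interpolation range of the three $p$-adic $L$-functions, using Proposition~\ref{prop:congr} to express the left-hand side through Katz's $p$-adic $L$-function. Write $\mathcal{X}$ for the set of characters $\xi$ of $\Gamma^{\rm ac}$ that are crystalline at $\fp$ and $\overline{\fp}$ and correspond to a Hecke character of infinity type $(-n,n)$ with $n\equiv r_1/2\pmod{p-1}$ and $n\geq l-1$ (so $\omega^{r_1/2}=\omega^n$ on such $\xi$); this set is $p$-adically, hence Zariski, dense in $\mathrm{Spec}\,\Z_p^{\rm ur}\dBr{\Gamma^{\rm ac}}$, it lies by construction in the interpolation ranges of Theorems~\ref{thm:sym} and~\ref{thm:hsieh}, and for $\xi\in\mathcal{X}$ the character $\psi_0^{-1}\xi^{-1}\omega^{r_1/2}$ has infinity type $(n+1,-n)\in\Sigma_\fp$, so that $\mathscr{L}_\fp^{\rm Katz}(\psi_0)^{-,\iota}(\xi)$ is governed by Theorem~\ref{thm:katz} via Definition~\ref{def:Katz-}. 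Both sides of the asserted identity are nonzero elements of $\mathrm{Frac}(\Z_p^{\rm ur}\dBr{\Gamma^{\rm ac}})$, with the left-hand side integral by Definition~\ref{triple:modified}; so it suffices to show that their ratio $u$ takes a fixed $p$-adic unit value at every $\xi\in\mathcal{X}$, after which the $p$-adic density of $\mathcal{X}$ forces $u$ to be free of zeros and poles, hence a unit, and the explicit value computation places it in $(\Lambda^{\rm ac})^\times$.

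To evaluate the left-hand side at $\xi\in\mathcal{X}$, I would first invoke Proposition~\ref{prop:congr}: the element $h_K\cdot\mathscr{L}_{\fp,\psi_0}^{\rm Katz}(K)^-$ generates the congruence ideal of $\CM$, hence agrees up to $(\Lambda^{\rm ac})^\times$ with any generator $\mathcal{H}$ as in Theorem~\ref{thm:hsieh}. Therefore, modulo units, $L_p(\mathrm{ad}(g_K)\otimes\psi_0)=\bigl(\mathcal{H}\cdot\mathscr{L}_p(\mathrm{ad}(g_K)\otimes\psi_0)\bigr)^2$, the square of the integrally normalized triple-product $p$-adic $L$-function, whose value at $\xi$ is $\mathcal{H}(\xi)^2$ times the explicit quantity of Theorem~\ref{thm:hsieh}. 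The Petersson-norm factor $\langle\theta_{\psi_0\xi_n},\theta_{\psi_0\xi_n}\rangle^{-2}$ occurring there is then rewritten through the Rankin--Selberg (Shimura) formula together with the decomposition $\mathrm{ad}^0\theta_{\psi_0\xi_n}\simeq\varepsilon_K\oplus\Ind_K^{\Q}\bigl((\psi_0\xi_n)^{1-\tau}\bigr)$: this expresses $\langle\theta_{\psi_0\xi_n},\theta_{\psi_0\xi_n}\rangle$, up to explicit $\pi$- and $\Gamma$-factors, in terms of the anticyclotomic value $L_K\bigl((\psi_0\xi_n)^{1-\tau},\,\cdot\,\bigr)$, which is precisely the Katz $L$-value that — by the Hida--Tilouine main conjecture underlying Proposition~\ref{prop:congr} — also governs $\mathcal{H}(\xi)$. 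The anticyclotomic $L$-values therefore cancel in the product $\mathscr{L}_p(\mathrm{ad}(g_K)\otimes\psi_0)(\xi)^2\cdot\mathcal{H}(\xi)^2$, leaving the triple-product central value $L(\mathrm{ad}(g_K)\otimes\psi_0^{-1}\xi^{-1}\omega^n,0)$, a power of $\Omega_p/\Omega_\infty$, explicit powers of $\pi$ and $2\pi i$, the local Euler factors at $p$ and at $\ell\mid N$, and the factor $\Gamma(n,l,l)$.

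To evaluate the right-hand side at $\xi\in\mathcal{X}$, I would plug in Theorem~\ref{thm:sym} for $L_p(\mathrm{ad}^0(g_K)\otimes\psi_0)(\xi)$ and Theorem~\ref{thm:katz} for $\mathscr{L}_\fp^{\rm Katz}(\psi_0)^{-,\iota}(\xi)=\mathscr{L}_{\fp,\fc}^{\rm Katz}(K)(\psi_0^{-1}\xi^{-1}\omega^{r_1/2})$, and multiply by $\tau_N=\prod_{\ell\mid N}\tau_\ell$. Setting $\chi=\psi_0^{-1}\xi^{-1}\omega^n$, the identity then reduces to the classical Artin factorization $L(\mathrm{ad}(g_K)\otimes\chi,0)=L(\mathrm{ad}^0(g_K)\otimes\chi,0)\cdot L(\chi,0)$ together with a bookkeeping check that the auxiliary factors balance up to a constant independent of $n$: the $\Gamma$-factors agree because $\Gamma(n,l,l)=\Gamma(n,l)\cdot n!$ and $n!$ is the factor $(k_1-1)!$ with $k_1=n+1$ in Katz's formula; the $p$-adic Euler factors agree using $\mathcal{E}_p(\mathrm{ad}(g),\psi_0\xi)=\mathcal{E}_p(\mathrm{ad}^0(g),\psi_0\xi)\cdot\bigl(1-(\psi_0\xi\omega^{-n})(\fp)/p\bigr)$ — the extra factor reflecting $\mathrm{ad}=\mathrm{ad}^0\oplus\mathbf{1}$ — and matching the remaining factors $\mathcal{E}_0,\mathcal{E}_1$ of Theorem~\ref{thm:hsieh} against the two Euler-type interpolation factors of Katz's $p$-adic $L$-function; the archimedean periods agree by comparing the powers of $\Omega_p/\Omega_\infty$, $\pi$, $2\pi i$ and $\sqrt{D_K}$, the $\mathrm{ad}^0$ and Katz contributions on the right being balanced against the two Katz factors and the Rankin--Selberg periods on the left; and the constants $\tau_\ell$ occur with the same multiplicity on both sides. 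Each of these auxiliary factors being, up to a fixed element of $\Z_p^{\rm ur}[1/p]^\times$ independent of $\xi$, the two sides agree on $\mathcal{X}$ up to such a constant, and one concludes as in the first paragraph.

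The main obstacle is the period and Euler-factor bookkeeping of the second and third paragraphs — in particular, pinning down the precise archimedean and $p$-adic shape of $\langle\theta_{\psi_0\xi_n},\theta_{\psi_0\xi_n}\rangle$ and of the congruence-ideal generator $\mathscr{L}_{\fp,\psi_0}^{\rm Katz}(K)^-$ accurately enough that, after cancelling the common anticyclotomic $L$-value, what remains exactly balances the $\Omega_p/\Omega_\infty$- and $\pi$-powers and the Euler factors appearing in Theorems~\ref{thm:sym} and~\ref{thm:katz}. It is precisely here — since the Hida--Tilouine input is an equality of ideals, and the comparison of Petersson norms with CM periods is only up to $p$-adic units — that the unit $u$ is genuinely needed rather than an equality on the nose. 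As the text emphasizes, because the three interpolation ranges overlap on the Zariski-dense set $\mathcal{X}$, this is a matter of combining classical formulas (Artin formalism, Rankin--Selberg, Katz's interpolation) and does not require any new analytic computation.
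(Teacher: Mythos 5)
Your proposal is correct and follows essentially the same route as the paper: evaluate all three $p$-adic $L$-functions on the Zariski-dense common interpolation range, use Proposition~\ref{prop:congr} together with Hida's adjoint/Petersson-norm formula (the paper's equation for $\langle\theta_{\psi_0\xi_n},\theta_{\psi_0\xi_n}\rangle$ via the class number formula) to cancel the Petersson norms in Hsieh's interpolation against the Katz factor built into Definition~\ref{triple:modified}, apply the Artin factorization of the complex $L$-value, and absorb the residual $n$-dependent constants into a unit of $\Lambda^{\rm ac}$. The only detail you leave implicit that the paper makes explicit is the use of the functional equation of Theorem~\ref{thm:katz} to reconcile the two different anticyclotomic specializations of the Katz measure (at $\psi_0^{\tau-1}\xi^{\tau-1}\mathbf{N}^{-1}$ on the congruence-number side versus $\psi_0^{-1}\xi^{-1}\omega^{n}$ on the side of $\mathscr{L}_{\fp}^{\rm Katz}(\psi_0)^{-,\iota}$), which falls under the bookkeeping you already flag.
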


\begin{proof}
Let $\xi$ be a character of $\Gamma^{\rm ac}$ as in the statement of  Theorem~\ref{thm:sym} and Theorem~\ref{thm:hsieh}, hence in particular corresponding to a Hecke character, still denoted by $\xi$, of infinity type $(-n,n)$ with $n\geq l-1$. Noting that $\theta_{\psi_0\xi_n}$ has weight $2n+2$, from Hida's formula for the adjoint $L$-value (see \cite[Thm~.7.1]{hidatilouineI}) and Dirichlet's class number formula we obtain (cf. \cite[p.\,414]{JSW})
\begin{equation}\label{eq:classnumber}
\langle\theta_{\psi_0\xi_n},\theta_{\psi_0\xi_n}\rangle=
(2n+1)!\cdot D_K^2\cdot\frac{1}{2^{4n+4}\pi^{2n+3}}\cdot\frac{2\pi h_K}{w_K\sqrt{D_K}}\cdot L(\psi_0^{1-\tau}\xi^{1-\tau},1),
\end{equation}
where $w_K$ is the number of units in  $\cO_K$. Since $L(\psi_0^{1-\tau}\xi^{1-\tau},1)$ corresponds to the value at $s=0$ of the $L$-function for the Hecke character $\psi_0^{\tau-1}\xi^{\tau-1}\mathbf{N}^{-1}$ of infinity type $(2n+2,-2n)$, using (\ref{eq:classnumber}) the interpolation formula in Theorem~\ref{thm:katz} can be rewritten as
\begin{align*}
\mathscr{L}_{\fp,\mathfrak{c}}^{\rm Katz}(K)(\psi_0^{\tau-1}\xi^{\tau-1}\mathbf{N}^{-1})&=\biggl(\frac{\Omega_p}{\Omega_\infty}\biggr)^{4n+2}\cdot\frac{2^{6n+4}\pi^{4n+2}}{\sqrt{D_K}^{2n+1}}\cdot\frac{w_K}{D_K^2h_K}\\
&\quad\times\left(1-\frac{(\psi_0\xi\omega^{-n})(\fp)}{(\psi_0\xi\omega^{-n})(\bar{\fp})}\right)\left(1-\frac{(\psi_0\xi\omega^{-n})(\fp)}{p(\psi_0\xi\omega^{-n})(\bar{\fp})}\right)\cdot\langle\theta_{\psi_0\xi_n},\theta_{\psi_0\xi_n}\rangle.
\end{align*}
Thus together with Theorem~\ref{thm:hsieh} we find that
\begin{equation}\label{eq:LHS}
\begin{aligned}
&\Lp({\rm ad}(g_K)\otimes\psi_0)(\xi)^2 \cdot\mathscr{L}_{\fp,\mathfrak{c}}^{\mathrm{Katz}}(K)(\psi_0^{\tau-1}\xi^{\tau-1}\mathbf{N}^{-1})^2\cdot h_K^2\\
&=\biggl(\frac{\Omega_p}{\Omega_\infty}\biggr)^{8n+4}\cdot\frac{2^{8n+4}\pi^{4n}}{\sqrt{D_K}^{4n}}\cdot\Gamma(n,l,l)\cdot\mathcal{E}({\rm ad}(g),\psi_0\xi)^2\cdot\frac{w_K^2}{D_K^4}\cdot\tau_N\cdot L({\rm ad}(g_K)\otimes\psi_0^{-1}\xi^{-1}\omega^n,0).
\end{aligned}
\end{equation}

On the other hand, we have the factorization
\begin{equation}\label{eq:factor-L}
L({\rm ad}(g_K)\otimes\psi_0^{-1}\xi^{-1}\omega^n,0)=L({\rm ad}^0(g_K)\otimes\psi_0^{-1}\xi^{-1}\omega^n,0)\cdot L(\psi_0^{-1}\xi^{-1}\omega^n,0).
\end{equation}
The character $\psi_0^{-1}\xi^{-1}\omega^n$ has infinity type $(n+1,-n)$, and so is in the range of interpolation for $\mathscr{L}_{\fp,\mathfrak{c}}^{\rm Katz}(K)$. Thus combining Theorem~\ref{thm:sym} and Theorem~\ref{thm:katz} and using (\ref{eq:factor-L}) we find
\begin{equation}\label{eq:RHS}
\begin{aligned}
L_p({\rm ad}^0(g_K)\otimes &\psi_0)(\xi) \cdot\mathscr{L}_{\fp,\mathfrak{c}}^{\rm Katz}(K)(\psi_0^{-1}\xi^{-1}\omega^n)=\biggl(\frac{\Omega_p}{\Omega_\infty}\biggr)^{6n+3}\cdot\pi^{3n}\cdot\Gamma(n,l)\cdot\mathcal{E}({\rm ad}^0(g),\psi_0\xi)^2\\
&\times\biggl(\frac{\Omega_p}{\Omega_\infty}\biggr)^{2n+1}\cdot n!\cdot\left(\frac{2\pi}{\sqrt{D_K}}\right)^{n}\cdot(1-p^{-1}\psi_0\xi(\fp))^2\cdot L({\rm ad}(g_K)\otimes\psi_0^{-1}\xi^{-1}\omega^n,0).
\end{aligned}
\end{equation}
Comparing \eqref{eq:LHS} and \eqref{eq:RHS} we see that their ratio is given by $2^{7n+4}\cdot\sqrt{D_K}^{-3n-8}\cdot\tau_N$; since for varying $n$ the first two factors are interpolated by a unit in $(\Lambda^{\rm ac})^\times$, applying the functional equation of Theorem~\ref{thm:katz} this gives the result.
%
%
%The result follows by comparing the interpolation formulas since the interpolation regions match. We begin by noting that the product of $L$-values in the right corresponds to the $L$-value in the left by Artin formalism, and the same happens for the Euler and gamma factors (noting that those in the denominator of the triple product are cancelled with those by the anticyclotomic Katz in the left).
%
%Further, regarding the different periods involved in the equality, the factor $(\Omega_p/\Omega_{\infty})$ appears in the right with exponent $3k'-3+k'-1=4k'-4$, while in the left we have $(\Omega_p/\Omega_{\infty})^{4k-4}$. The same happens with the powers of $\pi$, which appear in the left with an exponent of $4k'-4-2k'=2k'-4$ and in the right with $3k'/2-3+k'/2-1=2k'-4$.
%
%Finally, observe that the local factors outside $p$, like the factor $D_K^{k_2/2}$ coming from the Katz, are also invertible if $(p,D_K)=1$, so they are included in the admissible function. The comparison of the remaining terms follows in the same way.
\end{proof}

%\begin{remark}
%Proceeding as in \cite[Thm. 3.9]{DLR1} we can make the factor $\mathfrak f(k')$ explicit. As in loc.\,cit. it may be written as \[ \mathfrak f(k') = \frac{\mathfrak f_1(k')}{\omega(k')} \Big( \frac{\mathfrak f(\Psi_{k'})}{\mathfrak f_2(k') \mathfrak f_3(k')} \Big)^2, \] where the quotient $\mathfrak f_1(k')/\omega(k')$ is related with the choice of test vectors (see Lemma 3.6); $\mathfrak f_2(k')$ measures the discrepancy between the $L$-function of the adjoint and the Petersson norm (see Lemma 3.7); and $\mathfrak f_3(k')$ accounts for the difference between the Euler factors at primes $q \mid cD_K$ defining the various $L$-series involved in our picture (see Lemma 3.8 and the discussion before it).
%\end{remark}

\section{The Euler system}\label{sec:ES}

Let $g\in S_l(N_g,\chi_g)$ be a newform as in $\S\ref{subsec:Galrep}$, and let $\psi$ be a Hecke character of $K$ of infinity type $(1-k,0)$ for some even integer $k\geq 2$, conductor $\mathfrak{c}$ prime to $p$, and central character $\varepsilon_K$. Recall from $\S\ref{sec:Lp}$ that we assume that $(p)=\fp\overline{\fp}$ splits in $K$ and (for simplicity) that $p$ does not divide the class number of $K$.

\subsection{Modified diagonal cycles}\label{subsec:modified}

Recall that, for a positive integer $m$, we write $K[m]$ for the maximal $p$-extension inside the ring class field of $K$ of conductor $m$. Recall further that $H^1_{\rm Iw}(K[mp^\infty],T) = \varprojlim_rH^1(K[mp^r],T)$. Then, for a positive integer $m$, let
\begin{equation}\label{eq:class-Trep}
\kapinftyn\in H^1_{\Iw}(K[mp^\infty],\Trep)
\end{equation}
be the class $\kappa_{\psi,g,g^\ast,m,\infty}$ constructed in \cite[Thm.~6.5]{ACR}.
(For $m=1$, this is essentially the class $\kappa^{(3)}(\hf,g,g^\ast)$ defined in $\S\ref{section:bottomclass}$, after an application of Shapiro's lemma and twisting by the inverse of the anticyclotomic character $\xi$ in (\ref{eq:tw}) below.) Since we have a direct sum decomposition
\[
H^1_{\Iw}(K[mp^\infty],\Trep)=H^1_{\Iw}(K[mp^\infty],T)\oplus H^1_{\Iw}(K[mp^\infty],T'),
\]
we can project the class $\kapinftyn$ to each of the summands. We denote its projection to the first summand as $\kapinftyns$. For the following results, we keep the notations for Selmer groups introduced in Section \ref{subsection:Selmer}.

\begin{theorem}\label{thm:ESwithextrafactor}
%Suppose that:
%\begin{itemize}
%\item $p$ splits in $K$,
%\item $p$ does not divide the class number of $K$.
%\end{itemize}
Let $\mathcal{S}$ be the set of all squarefree products of primes $q$ split in $K$ and coprime to $pN_gN_\psi$, and assume that $H^1(K[mp^s],T)$ is torsion-free for every $m \in\mathcal{S}$ and for every $s\geq 0$. There exists a collection of classes
\[
\left\lbrace\kapinftyns\in \Sel_{\Gr}(K[mp^\infty],T)\;\colon\; m\in\mathcal{S}\right\rbrace
\]
such that whenever $m, mq\in\mathcal{S}$ with $q$ a prime, we have
\begin{equation}
{\rm cor}_{K[mq]/K[m]}(\kapinftynqs)=P_{\frk{q}}(\Vrep;{\rm Fr}_{\frk{q}}^{-1})\,\kapinftyns.\nonumber
\end{equation}
\end{theorem}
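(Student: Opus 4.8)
The plan is to obtain these classes and their norm relations formally from the anticyclotomic diagonal-cycle Euler system of \cite{ACR}, by projecting along the $G_K$-stable decomposition $\Trep=T\oplus T'$ of (\ref{eq:dec-V}). Concretely, I would start from the classes $\kapinftyn\in H^1_{\Iw}(K[mp^\infty],\Trep)$ of (\ref{eq:class-Trep}) — the classes $\kappa_{\psi,g,g^\ast,m,\infty}$ of \cite[Thm.~6.5]{ACR} — which are already known to satisfy, for all $m,mq\in\mathcal{S}$ with $q$ prime,
\[
{\rm cor}_{K[mq]/K[m]}(\kapinftynq)=P_{\frk{q}}(\Vrep;{\rm Fr}_{\frk{q}}^{-1})\,\kapinftyn .
\]
Since the idempotents cutting out $T$ and $T'$ inside $\Trep$ are $G_K$-equivariant, they induce a splitting $H^1_{\Iw}(K[mp^\infty],\Trep)=H^1_{\Iw}(K[mp^\infty],T)\oplus H^1_{\Iw}(K[mp^\infty],T')$ that is functorial in the number field, hence compatible both with the corestriction maps and with the inverse limit over $s$ defining Iwasawa cohomology. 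Taking $\kapinftyns$ to be the image of $\kapinftyn$ under the first projection and applying that projection to the displayed identity yields the asserted norm relation at once.

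To see that $\kapinftyns$ lands in $\Sel_{\Gr}(K[mp^\infty],T)$, I would use that $\kapinftyn$ itself lies in the balanced Selmer group $\Sel_{\Gr}(K[mp^\infty],\Trep)$ — this follows from \cite{ACR} (building on \cite[Cor.~8.2]{BSV}, recalled in the $\Lambda$-adic form of $\S\ref{section:bottomclass}$, for the local condition at $p$) — together with the fact that $\Trep=T\oplus T'$ induces a decomposition of balanced Selmer groups (the Proposition of $\S\ref{subsection:Selmer}$, whose proof applies verbatim over each $K[mp^s]$ and hence in the limit). Projecting a balanced class for $\Trep$ onto its $T$-component then produces a class in $\Sel_{\Gr}(K[mp^\infty],T)$. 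The running hypothesis that $H^1(K[mp^s],T)$ be torsion-free for all $m\in\mathcal{S}$ and $s\geq 0$ enters here just as in \cite{ACR}: it ensures that $H^1_{\Iw}(K[mp^\infty],T)=\varprojlim_s H^1(K[mp^s],T)$ behaves well under the above projections and under specialization, and it will also be needed to run the Kolyvagin-type machinery of \cite{JNS} in the arithmetic applications.

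I do not expect a serious obstacle in this statement itself: it is essentially bookkeeping, the only point to check being that the decomposition $\Trep=T\oplus T'$ is compatible with corestriction, with the Iwasawa limit, and with the local conditions cutting out the balanced Selmer group. The genuine difficulty is deferred. The Euler factor obtained here is
\[
P_{\frk{q}}(\Vrep;{\rm Fr}_{\frk{q}}^{-1})=P_{\frk{q}}(V;{\rm Fr}_{\frk{q}}^{-1})\cdot P_{\frk{q}}(V';{\rm Fr}_{\frk{q}}^{-1}),
\]
which carries the spurious extra factor $P_{\frk{q}}(V';{\rm Fr}_{\frk{q}}^{-1})$ attached to the rank-one summand $V'=E(\psi_{\mathfrak{P}}^{-1})(1-k/2)$, so $\kapinftyns$ is not yet a genuine Euler system for $T$. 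Removing this factor — which is what produces the classes of Theorem~\ref{thm:ES-T} with the correct Euler factor $P_{\frk{q}}(V;{\rm Fr}_{\frk{q}}^{-1})$ — requires modifying the bottom class by a suitable element built from the CM side, in the spirit of \cite{LZ}; this is the content of $\S\ref{subsec:modified}$ and is where the real work of the Euler system construction lies.
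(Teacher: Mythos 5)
Your proposal is correct and is exactly the paper's argument: the paper defines $\kapinftyns$ as the projection of the class $\kappa_{\psi,g,g^*,m,\infty}$ of \cite[Thm.~6.5]{ACR} onto the $T$-summand of $H^1_{\Iw}(K[mp^\infty],\Trep)=H^1_{\Iw}(K[mp^\infty],T)\oplus H^1_{\Iw}(K[mp^\infty],T')$, and deduces the norm relation and the balanced Selmer condition immediately from \cite[Thm.~6.5, Prop.~6.6]{ACR}. Your closing remark about the spurious factor $P_{\frk{q}}(V';{\rm Fr}_{\frk{q}}^{-1})$ and its removal in Theorem~\ref{thm:ES-T} also matches the paper's subsequent development.
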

\begin{proof}
This is an immediate consequence of \cite[Thm. 6.5]{ACR} and [\emph{op.\,cit.}, Prop.~6.6].
\end{proof}

\begin{comment}
\begin{proposition}
For all $n\in\mathcal{S}$, we have $\kapinftyns\in\Sel_{\Gr}(K[np^\infty],\Tsrep)$.
\end{proposition}
\begin{proof}
This follows in the same way as \cite[Prop. 6.6]{ACR}.
\end{proof}
\end{comment}

The Euler factors appearing in the previous theorem are not the ones that we want. Indeed, let $q = \frk{q} \bar{\frk{q}}$ be a prime which splits in $K$. Then
\[
P_{\frk{q}}(V_{\ad(g)}^\psi;X)=\left(1-\frac{\psi(\frk{q})X}{q^{k/2}}\right)P_{\frk{q}}(V;X),
\]
so there is an unwanted extra factor. We now deal with this problem.

%{\color{blue} R: Si fem servir la maquin\`aria de JNS potser ens podem estalviar aquest lemma.}
%We recall the following result from \cite{LZ}.
%\begin{lemma}
%For any prime $q \nmid pN_g N_{\psi}$, any Dirichlet character $\eta$ of $p$-power conductor and any integer $k \geq 0$, the map $\Frob_{q}^{p^k}-1$ is injective on $T(\eta^{-1})$.
%\end{lemma}

\begin{definition}\label{def:P'}
Let $\mathcal P'$ be the set of primes $q \nmid pN_g N_{\psi}$ split in $K$ such that
\begin{itemize}
\item $q = 1$ modulo $p$,
\item $T/(\Frob_{q}-1)T$ is a cyclic $\mathbb Z_p$-module,
\item $\Frob_{q}-1$ is bijective on $T'$.
\end{itemize}
Here $\Fr_q$ denotes any arithmetic Frobenius element for $q$. Since $T^\vee(1)\simeq T^c$ and $(T')^\vee(1)\simeq (T')^c$, the definition does not depend on this choice.
%
%Let $\cl{S}'$ be the set of squarefree products of primes in $\cl{P}'$.
\end{definition}

\begin{remark}\label{rem:sigma}
Under certain conditions, we will show in Proposition~\ref{prop:existence-of-sigma} below that there exists $\sigma\in G_K$ such that if $q$ is a prime such that $\Fr_q$ is conjugate to $\sigma$ in $\Gal(K(\mu_p,\overline{T},\overline{T'})/K)$, then it belongs to $\cl{P}'$.
\end{remark}

\begin{comment}
Let $q = \mathfrak q \bar{\mathfrak q}$ be a prime which splits in $K$, and consider the Euler factor \[ \begin{aligned} P_{\mathfrak q}(X) & = \det(1 - \Frob_{\mathfrak q}^{-1} X \mid (\Trep)^{\vee}(1)) \\ & = \Big( 1 - \frac{\psi(\mathfrak q) \alpha_g^2}{q^{k/2+l-1}} X \Big) \Big( 1 - \frac{\psi(\mathfrak q) \alpha_g \beta_g}{q^{k/2+l-1}} X \Big)^2 \Big( 1 - \frac{\psi(\mathfrak q) \beta_g^2}{q^{k/2+l-1}} X \Big). \end{aligned} \]

A trivial computation shows that \[ P_{\mathfrak q}([\mathfrak q]) \psi(\bar{\mathfrak q})^2 [\bar{\mathfrak q}]^2 = P_{\bar{\mathfrak q}}([\bar{\mathfrak q}]) \psi(\mathfrak q)^2 [\mathfrak q]^2 \pmod{q-1}, \] and the square of the Euler factor appearing in the norm relations is \[ P_{\mathfrak q}([\mathfrak q]) P_{\bar{\mathfrak q}}([\bar{\mathfrak q}]) \chi_g(q)^2 \pmod{q-1}. \]
\end{comment}

\begin{theorem}\label{thm:ES-T}
%Suppose that:
%\begin{itemize}
%\item $p$ splits in $K$,
%\item $p$ does not divide the class number of $K$.
%\end{itemize}
Let $\cl{S}'$ be the set of squarefree products of primes in $\cl{P}'$, and assume that $H^1(K[mp^s],T)$ is torsion-free for every $n\in\mathcal{S}'$ and for every $s\geq 0$. There exists a collection of classes
\[
\left\lbrace\kappa_m\in \Sel_{\Gr}(K[mp^\infty],T)\;\colon\; m\in\mathcal{S}'\right\rbrace
\]
such that $\kappa_1=\kapinftyones$ and, whenever $m, mq\in\mathcal{S}'$ with $q$ a prime, we have
\begin{equation}
{\rm cor}_{K[mq]/K[m]}(\kappa_{mq})=P_{\frk{q}}(V;{\rm Fr}_{\frk{q}}^{-1})\,\kappa_m,\nonumber
\end{equation}
where $\mathfrak{q}$ is any of the primes of $K$ above $q$.
\end{theorem}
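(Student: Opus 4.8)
The plan is to obtain the corrected classes $\kappa_m$ by rescaling the classes $\kapinftyns$ of Theorem~\ref{thm:ESwithextrafactor} by suitable units of the relevant Iwasawa algebras, chosen so as to kill the superfluous Euler factor. Write $\Lambda_m:=\mathcal{O}\dBr{\Gal(K[mp^\infty]/K)}$. Since $K[m]/K$ is by construction a $p$-extension, $\Gal(K[mp^\infty]/K)$ is a pro-$p$ group, so $\Lambda_m$ is local and an element is a unit exactly when its image under the augmentation $\Lambda_m\to\mathcal{O}$ is a unit. If $q\in\mathcal{P}'$ is coprime to $m$, then $q$ is unramified in $K[mp^\infty]/K$, so ${\rm Fr}_{\frk q}$ defines an element of $\Gal(K[mp^\infty]/K)$; and since $V'=E(\psi_{\mathfrak{P}}^{-1})(1-k/2)$ is one-dimensional, $P_{\frk q}(V';X)=1-\beta_{\frk q}X$ with $\beta_{\frk q}=\psi(\frk q)/q^{k/2}\in\mathcal{O}^\times$ (as $q\nmid p$), and the augmentation sends $P_{\frk q}(V';{\rm Fr}_{\frk q}^{-1})=1-\beta_{\frk q}{\rm Fr}_{\frk q}^{-1}$ to $1-\beta_{\frk q}$. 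The two conditions in Definition~\ref{def:P'} are exactly what makes this a unit of $\mathcal{O}$: $q\equiv 1\pmod p$ makes $\beta_{\frk q}$ congruent mod $\mathfrak{P}$ to the scalar by which ${\rm Fr}_{\frk q}$ acts on the line $T'$, and ``$\Frob_q-1$ bijective on $T'$'' (which, via $(T')^\vee(1)\simeq (T')^c$, is independent of the prime above $q$) says this scalar differs from $1$ by a unit. Hence $P_{\frk q}(V';{\rm Fr}_{\frk q}^{-1})\in\Lambda_m^\times$; together with the factorization $P_{\frk q}(\Vrep;X)=P_{\frk q}(V;X)\,P_{\frk q}(V';X)$ afforded by \eqref{eq:dec-V}, this is what will let us pass from the norm relations of Theorem~\ref{thm:ESwithextrafactor} to the desired ones.

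Next I would produce a compatible system of units $\{D_m\in\Lambda_m^\times:m\in\mathcal{S}'\}$ with $D_1=1$ and, for every prime $q\mid m$,
\[
\pi_{m,m/q}(D_m)=D_{m/q}\cdot P_{\frk q}(V';{\rm Fr}_{\frk q}^{-1})^{-1}\quad\text{in }\Lambda_{m/q},
\]
where $\pi_{m,m/q}\colon\Lambda_m\twoheadrightarrow\Lambda_{m/q}$ is the natural projection and ${\rm Fr}_{\frk q}\in\Gal(K[(m/q)p^\infty]/K)$ (the right-hand side lies in $\Lambda_{m/q}^\times$ by the previous paragraph). This is done by induction on the number of prime factors of $m$: assuming the $D_{m'}$ and their relations for all proper divisors $m'\mid m$, commutativity of the $\Lambda_{m'}$ together with the inductive relations shows that the prescribed images $D_{m/q}\,P_{\frk q}(V';{\rm Fr}_{\frk q}^{-1})^{-1}$ for the various $q\mid m$ agree after further projection to the rings $\Lambda_{m/qq'}$; since the transition maps are surjective and the relevant Galois groups fit into fibre-product squares over these sub-products (ramification at distinct auxiliary primes being independent), a simultaneous lift $D_m\in\Lambda_m^\times$ exists.

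Setting $\kappa_m:=D_m\cdot\kapinftyns$, we get $\kappa_1=\kapinftyones$, and, using that ${\rm cor}_{K[mq]/K[m]}$ is $\Lambda_{mq}$-linear (the target being a $\Lambda_{mq}$-module via $\Lambda_{mq}\twoheadrightarrow\Lambda_m$) and invoking Theorem~\ref{thm:ESwithextrafactor} together with the defining relation of $D_{mq}$ and the factorization $P_{\frk q}(\Vrep;X)=P_{\frk q}(V;X)P_{\frk q}(V';X)$,
\[
{\rm cor}_{K[mq]/K[m]}(\kappa_{mq})=\pi_{mq,m}(D_{mq})\,P_{\frk q}(\Vrep;{\rm Fr}_{\frk q}^{-1})\,\kapinftyns=D_m\,P_{\frk q}(V;{\rm Fr}_{\frk q}^{-1})\,\kapinftyns=P_{\frk q}(V;{\rm Fr}_{\frk q}^{-1})\,\kappa_m
\]
whenever $m,mq\in\mathcal{S}'$; independence of the choice of $\frk q\mid q$ follows as in \cite{ACR}. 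Finally $\kappa_m\in\Sel_{\Gr}(K[mp^\infty],T)$, since the local conditions defining this Selmer group cut out a $\Gal(K[mp^\infty]/K)$-stable, hence $\Lambda_m$-stable, submodule of $H^1_{\Iw}(K[mp^\infty],T)$, which contains $\kapinftyns$ by Theorem~\ref{thm:ESwithextrafactor} and therefore also $D_m\cdot\kapinftyns$.

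The step I expect to require the most care is the coherence of the family $\{D_m\}$ — arranging the lifts to be simultaneously compatible along every divisibility relation in $\mathcal{S}'$, rather than just along a single chain — precisely because the Frobenius elements ${\rm Fr}_{\frk q}$ for $q\mid m$ are not themselves available in $\Lambda_m$ (as $q$ ramifies in $K[mp^\infty]/K$); once this and the unit statement of the first paragraph are in hand, the verification of the norm relations and the Selmer property is purely formal.
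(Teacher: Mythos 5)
Your proposal follows essentially the same route as the paper: the point is that the superfluous Euler factor, which comes from the rank-one summand $V'$, becomes a unit in the local ring $\Lambda_m=\mathcal{O}\dBr{\Gal(K[mp^\infty]/K)}$ precisely because of the conditions $q\equiv 1\pmod p$ and ``$\Frob_q-1$ bijective on $T'$'' in Definition~\ref{def:P'}, so one can divide the classes $\kapinftyns$ of Theorem~\ref{thm:ESwithextrafactor} by it, and the norm relations, the Selmer membership, and the normalization $\kappa_1=\kapinftyones$ then follow formally, exactly as you argue. The only real difference is how the correcting unit is realized inside $\Lambda_m$, given that $\Frob_{\frk q}$ itself does not live there (as $q$ ramifies in $K[m]$). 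The paper does this explicitly: for each $q\mid m$ it takes $F_q\in\Gamma_m$ to be the unique element acting trivially on $K[q]$ and mapping to $\Frob_q$ in $\Gamma_{m/q}$ (uniqueness and existence use the splitting $\Gamma_m\simeq\Gal(K[p^\infty]/K)\times\prod_{q\mid m}\Delta_q$, with $\Delta_q=\Gal(K[q]/K)$, available since $p\nmid h_K$), proves that $1-\psi(\frk q)q^{-k/2}F_{\frk q}^{-1}$ is invertible as in the proof of \cite[Thm.~5.3.3]{LZ}, and simply sets $\kappa_m=\prod_{q\mid m}\bigl(1-\psi(\frk q)q^{-k/2}F_{\frk q}^{-1}\bigr)^{-1}\kapinftyns$. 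In other words, your compatible system of units exists for the trivial reason that it can be written down in closed form, namely $D_m=\prod_{q\mid m}\bigl(1-\psi(\frk q)q^{-k/2}F_{\frk q}^{-1}\bigr)^{-1}$, and the coherence along all divisibility relations is immediate from $\pi_{m,m/q}(F_{\frk q})=\Frob_{\frk q}$.

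The step you flagged as delicate is indeed the one place where your argument, as written, is not yet a proof: the assertion that ``the relevant Galois groups fit into fibre-product squares'' does not by itself yield surjectivity of $\Lambda_m$ onto the iterated fibre product of the rings $\Lambda_{m/q}$ — completed group rings do not commute with fibre products of groups, and for more than two primes simultaneous lifting along several quotients is not automatic for arbitrary rings and ideals. What rescues the argument is the tensor-product decomposition $\Lambda_m\simeq\Lambda_1\hat\otimes\bigotimes_{q\mid m}\mathcal{O}[\Delta_q]$ (with each $\pi_{m,m/q}$ given by augmentation in one factor), from which a simultaneous lift can be produced by an inclusion--exclusion formula; but once you invoke that structure you may as well define the canonical Frobenius lifts $F_q$ and write the explicit product, which is exactly what the paper does. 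So: correct idea and correct supporting computations (in particular your verification that the augmentation of $P_{\frk q}(V';\Frob_{\frk q}^{-1})$ is a unit matches the paper's use of the two conditions in Definition~\ref{def:P'}), but the abstract patching should be replaced by, or reduced to, the explicit construction via the lifts $F_q$.
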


%\begin{theorem}
%There exists a collection of classes \[
%c_m' \in H_{\Iw}^1(K[mp^{\infty}], T) \] such that
%\begin{enumerate}
%\item[(i)] $c_1' = \kappa_{\psi,g,g,1}$;
%\item[(ii)] $c_m' \in H_{\Iw,\Gr,1}^1(K[mp^{\infty}],T)^{\nu}$ for all $m$;
%\item[(iii)] if $m \in \mathcal R'$ and $q$ is a prime with $qm \in \mathcal R'$, then, \[ \cor_{K[mq]/K[m]} = P_{\mathfrak q}(\Ad^0 g \otimes \psi, \Frob_{\mathfrak q}^{-1}) c_m'. \]
%\end{enumerate}
%\end{theorem}

\begin{proof}
We construct the classes $\kappa_m$ by modifying the classes $\kapinftyns$ in Theorem~\ref{thm:ESwithextrafactor} appropriately as done in the proof of \cite[Thm. 5.3.3]{LZ}.

For each $m \in \cl{S}'$, let $\Gamma_m = \Gal(K[mp^{\infty}]/K)$. For each prime $q \mid m$, let $F_q \in \mathcal S'$ denote the unique element of $\Gamma_m$ which acts trivially on $K[q]$ and maps to $\Frob_{q}$ in $\Gamma_{m/q}$. Then, the factor $1-q^{-k/2}\psi(\mathfrak q) F_{\mathfrak q}^{-1}$ is invertible in $\mathbb Z_p\dBr{\Gamma_m}$. We now take
\[ \kappa = \prod_{q \mid m} \left(1-\frac{\psi(\frk{q})}{q^{k/2}} F_{\mathfrak q}^{-1}\right)^{-1} \kapinftyns.
\]
These classes clearly satisfy the required properties.
%(since the Greenberg Selmer groups are $\mathbb Z_p[[\Gamma_m]]$-modules).
\end{proof}

\subsection{The explicit reciprocity law}

%We now explain the relation between the non-triviality of the Euler system for $\Tsrep$ constructed in Theorem~\ref{thm:ES-T} and the $p$-adic $L$-function $L_p({\rm ad}^0(g_K)\otimes\psi_0)$ of Theorem~\ref{thm:sym}.
%The link is an easy  consequence of the reciprocity law for diagonal cycles \cite{BSV,DR3} and the factorization of Theorem~\ref{thm:factor}.
Let $K_\infty$ denote the anticyclotomic $\bb{Z}_p$-extension of $K$ and let
\[
\kapinfty
%:={\rm cor}_{K[1]/K}(\kapinftyone)
\in H^1_{\rm Iw}(K_\infty,\Trep)
\]
be the image of the class $\kapinftyone$ in (\ref{eq:class-Trep})  under the corestriction map for $K[p^\infty]/K_\infty$.
By \cite[Prop.~6.6]{ACR} we have $\kapinfty\in{\rm Sel}_{\rm bal}(K_\infty,\Trep)$; in particular, the restriction ${\rm res}_{\overline{\fp}}(\kapinfty)$ lands in the image of the natural map
\[
H^1_{\rm Iw}(K_{\infty,\overline{\fp}},\mathscr{F}_{\overline{\fp}}^{\rm bal}(\Trep))\longrightarrow H^1_{\rm Iw}(K_{\infty,\overline{\fp}},\Trep)
\]
(see (\ref{eq:local-bal})). %for the definition of $\mathscr{F}_{\overline{\fp}}^{\rm bal}(\Trep)$).
Note that this map is an injection under our hypotheses. On the other hand, let
\begin{equation}\label{eq:kapinftys}
\kapinftys\in{\rm Sel}_{\Gr}(K_\infty,T)
\end{equation}
be the image of the class $\kappa_1=\kapinftyones$ of Theorem~\ref{thm:ES-T} under the corestriction map. Thus $\kapinftys$ is the projection of $\kapinfty$ onto the first direct summand in the decomposition
\[
{\rm Sel}_{\rm bal}(K_\infty,\Trep)={\rm Sel}_{\rm bal}(K_\infty,T)\oplus{\rm Sel}_{\rm bal}(K_\infty,T'),
\]
and since $\mathscr{F}_{\overline{\fp}}^{\rm bal}(\Trep)$ is contained in $T$, it is clear that
\begin{equation}\label{eq:Log-factor}
{\rm res}_{\overline{\fp}}(\kapinfty)={\rm res}_{\overline{\fp}}(\kapinftys).
\end{equation}

\begin{definition}\label{def:L-psi}
Put $\psi_0=\psi\lambda^{2-k}$ as in Remark~\ref{rem:psi_0},
%which is a Hecke character of infinity type $(-1,0)$, conductor $\mathfrak{c}\fp^e$ with $e\in\{0,1\}$ and the
and define
\begin{equation}\label{eq:tw}
L_p({\rm ad}^0(g_K)\otimes\psi):={\rm Tw}_{\xi}\bigl(L_p({\rm ad}^0(g_K)\otimes\psi_0)\bigr),
%\quad\mathscr{L}_{\fp}^{\rm Katz}(\psi)^{-,\iota}:={\rm Tw}_\xi(\mathscr{L}_{\fp}^{\rm Katz}(\psi)^{-,\iota}),
\end{equation}
where ${\rm Tw}_\xi:\Lambda^{\rm ac}\rightarrow\Lambda^{\rm ac}$ is the twisting homomorphism %$\gamma\mapsto\xi(\gamma)\gamma
for the character $\xi:=(\lambda^{1-\tau})^{k/2-1}$. Similarly, define $\mathscr{L}_p({\rm ad}(g_K)\otimes\psi)$, $\mathscr{L}_{\fp,\psi}^{\rm Katz}(K)^-$, $L_p({\rm ad}(g_K)\otimes\psi)$, and $\mathscr{L}_{\fp}^{\rm Katz}(\psi)^{-,\iota}$ by twisting the corresponding $p$-adic $L$-functions defined for $\psi_0$ in $\S\ref{subsec:factor}$.
\end{definition}

For the statement of the next result, note that $\psi_0$ has the same restriction to $\Delta_\mathfrak{c}$ as $\psi$.
%see (\ref{eq:factor-G}).

%It follows from \cite[Cor.~8.2]{BSV} that the image of $\kappa_1=\kapinftyones$ under the restriction map
%\[
%{\rm res}_{\bar{\fp}}:H^1_{\rm Iw}(K_\infty,\Trep)\longrightarrow H^1_{\rm Iw}(K_{\infty,\bar{\fp}},\Trep)
%\]
%is contained in
%\[
%H^1_{\rm Iw,\rm bal}(K_{\infty,\bar{\fp}},\Trep):={\rm im}\bigl(H^1_{\rm Iw}(K_{\infty,\bar{\fp}},\mathcal{F}_{\bar\fp}^{\rm bal}(\Trep))\longrightarrow H^1_{\rm Iw}(K_{\infty,\bar{\fp}},\Trep)\bigr),
%\]
%where $\mathcal{F}_{\bar\fp}^{\rm bal}(\Trep)=T_g^+\otimes T_{g^*}^+(\psi_\mathfrak{P}^{-1})(1-k/2)$.

%It is easy to see that the map in this definition is an injection, and in the following we shall use this to view ${\rm res}_{\bar\fp}(\kapinfty)$ as a class in $H^1_{\rm Iw}(K_{\infty,\bar{\fp}},\mathcal{F}_{\bar\fp}^{\rm bal}(\Trep))$. Note also that
%\[
%\mathcal{F}_{\bar\fp}^{\rm bal}(\Tsrep):=\mathcal{F}_{\bar\fp}^{\rm bal}(\Trep)\cap T=\mathcal{F}_{\bar\fp}^{\rm bal}(\Trep),
%\]
%and so ${\rm res}_{\bar\fp}(\kapinfty)={\rm res}_{\bar\fp}(\kapinftys)$.

\begin{theorem}\label{thm:ERL}
There exists an injective $\Lambda^{\rm ac}$-module map with pseudo-null cokernel
\[
\mathfrak{Log}:H^1_{\rm Iw}(K_{\infty,\overline{\fp}},\mathscr{F}_{\overline{\fp}}^{\rm bal}(\Trep))\longrightarrow\Z_p^{\rm ur}\dBr{\Gamma^{\rm ac}}
\]
such that
\begin{align*}
\mathfrak{Log}({\rm res}_{\bar{\fp}}(\kapinftys))
&=h_K\cdot\mathscr{L}_{\fp,{\psi}}^{\rm Katz}(K)^-\cdot\mathscr{L}_p({\rm ad}(g_K)\otimes\psi).
%&=u\cdot L_p({\rm ad}^0(g_K)\otimes\psi_0)\cdot\mathscr{L}_{\fp}^{\rm Katz}(\psi_0)^{-,\iota}\cdot\tau_N,
\end{align*}
%where $u\in(\Lambda^{\rm ac})^\times$ and $\tau_N\in\Q^\times$ are as in Theorem~\ref{thm:factor}.
\end{theorem}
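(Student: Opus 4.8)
The map $\mathfrak{Log}$ will be built from the Perrin--Riou logarithm $\frk{Log}(\hf,g_\alpha,g^\ast_\alpha)$ of \S\ref{section:bottomclass}, taking for $\hf$ the CM Hida family $\CM$ attached to $\psi_0$ (Remark~\ref{rem:psi_0}), and then transporting it through Shapiro's lemma, projecting onto the $\ad^0$-isotypic part, twisting by $\gamma_\fp\mapsto\gamma_\fp^{\tau-1}$ (and by the character $\xi=(\lambda^{1-\tau})^{k/2-1}$ of Definition~\ref{def:L-psi}, to pass from $\psi_0$ to $\psi$), extending scalars to $\Z_p^{\rm ur}$, and multiplying by a congruence power series $\mathcal{H}$ of $\CM$. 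The last operation produces both the $\Z_p^{\rm ur}$-coefficients and the factor $h_K\cdot\mathscr{L}_{\fp,\psi}^{\rm Katz}(K)^{-}$ in the statement: by Proposition~\ref{prop:congr}, $\mathcal{H}$ generates, inside $\Z_p^{\rm ur}\dBr{\Gamma^{\rm ac}}$, the same ideal as $h_K\cdot\mathscr{L}_{\fp,\psi_0}^{\rm Katz}(K)^{-}$, and multiplying by it clears the denominator coming from the normalization of $\frk{Log}$ by the Petersson self-pairing of $\CM$ built into the construction of \cite{BSV}.

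First I would set up the local dictionary. Since $\hf=\CM$, one has $\bb{V}_\hf\simeq\Ind_K^{\Q}(\psi_0^{-1}\Psi_S)$, and, $p$ being split in $K$, the restriction of this induced representation to $G_{\Q_p}$ splits according to the two places $\fp,\bar\fp$; matching the unramified quotient $\bb{V}_\hf^{-}$, which sits over $\bar\fp$, with the factors $T_g^{+}\otimes T_{g^\ast}^{+}$, Shapiro's lemma identifies $H^1_{\rm Iw}(K_{\infty,\bar\fp},\mathscr{F}_{\bar\fp}^{\rm bal}(\Trep))$ with the $\Gamma^{\rm ac}$-avatar of the ``$f$-part'' local group $H^1(\Q_p,\bb{V}(\hf,g,g^\ast)_f)$ of \S\ref{section:bottomclass}, through which $\frk{Log}(\hf,g_\alpha,g^\ast_\alpha)$ factors. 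Under this identification, $\res_p(\kappa^{(3)}(\hf,g,g^\ast))$ --- which lies in $H^1_{\bal}$ by \cite[Cor.~8.2]{BSV} --- corresponds to $\res_{\bar\fp}(\kapinfty)$; since $\frk{Log}$ factors through the $\ad^0$-summand (the remark following Proposition~\ref{prop:sign-behaviour}) and $\mathscr{F}_{\bar\fp}^{\rm bal}(\Trep)\subset T$ forces $\res_{\bar\fp}(\kapinfty)=\res_{\bar\fp}(\kapinftys)$ by \eqref{eq:Log-factor}, the relevant input is exactly $\res_{\bar\fp}(\kapinftys)$, up to the twist by $\xi$.

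The value is then read off from the reciprocity law of \cite{BSV}: as recalled in \S\ref{section:bottomclass}, $\frk{Log}(\hf,g_\alpha,g^\ast_\alpha)(\res_p(\kappa^{(3)}(\hf,g,g^\ast)))=\mathscr{L}_p(\breve{\hf},\breve{g},\breve{h})$, which for the fixed test vectors is the unbalanced triple product $p$-adic $L$-function $\mathscr{L}_p(\hf,g,g^\ast)$ of \cite[Thm.~A]{Hs}, whose image under $\gamma_\fp\mapsto\gamma_\fp^{\tau-1}$ is $\mathscr{L}_p({\rm ad}(g_K)\otimes\psi_0)$ by (the proof of) Theorem~\ref{thm:hsieh}. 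Multiplying by $\mathcal{H}$ and applying the twist $\xi$ --- which by Definition~\ref{def:L-psi} carries $\mathcal{H}$ to $h_K\cdot\mathscr{L}_{\fp,\psi}^{\rm Katz}(K)^{-}$ and $\mathscr{L}_p({\rm ad}(g_K)\otimes\psi_0)$ to $\mathscr{L}_p({\rm ad}(g_K)\otimes\psi)$ --- gives the asserted formula.

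The main obstacle is the claim that $\mathfrak{Log}$ is injective with pseudo-null cokernel. Injectivity is immediate: $\frk{Log}$, realized as a Perrin--Riou regulator of the rank-one crystalline representation $\mathscr{F}_{\bar\fp}^{\rm bal}(\Trep)$ of $G_{\Q_p}$ along the ramified $\Z_p$-extension $K_{\infty,\bar\fp}/\Q_p$, is injective over $\Frac\,\Lambda^{\rm ac}$, and $\mathcal{H}\neq 0$. The substance lies in showing that the \emph{denominator} of $\frk{Log}$ on the Iwasawa cohomology of the lattice $\mathscr{F}_{\bar\fp}^{\rm bal}(\Trep)$ is exactly the congruence ideal $C(\CM)$ --- equivalently, that the $\Lambda$-adic period entering $\frk{Log}$ generates, against the integral de Rham lattice, precisely the congruence module of $\CM$. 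Granting this, Proposition~\ref{prop:congr} identifies $C(\CM)$ with $(h_K\cdot\mathscr{L}_{\fp,\psi_0}^{\rm Katz}(K)^{-})$, so that $\mathfrak{Log}=\mathcal{H}\cdot\frk{Log}$ has pseudo-null cokernel in $\Z_p^{\rm ur}\dBr{\Gamma^{\rm ac}}$ (trivial up to the usual finite local $H^0$ and $H^2$ terms, which vanish under the running hypotheses {\rm (H0)}--{\rm (H3)}). Carrying out this comparison of BSV's big logarithm with the Coleman map in the rank-one CM situation, and then invoking Proposition~\ref{prop:congr}, is where the real work lies; the rest is bookkeeping with Shapiro's lemma and the twisting homomorphisms.
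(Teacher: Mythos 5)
Your proposal follows essentially the same route as the paper's proof: the identification of $\mathbb{V}(\hf,g,g^*)_f$ with $\mathscr{F}_{\overline{\fp}}^{\rm bal}$ twisted by $\xi\Psi_S^{(\tau-1)/2}$ coming from $\mathbb{V}_\hf\simeq\Ind_K^{\Q}(\psi_0^{-1}\Psi_S)$ and the splitting of $p$, the triple-product reciprocity law applied to the specialized diagonal class, Proposition~\ref{prop:congr} to trade the congruence ideal of $\CM$ for $h_K\cdot\mathscr{L}_{\fp,\psi}^{\rm Katz}(K)^-$, the twist by $\xi$, and the identity (\ref{eq:Log-factor}). The one step you flag as ``where the real work lies'' --- that the congruence-ideal-normalized regulator is integral, injective with pseudo-null cokernel --- is in the paper simply imported from \cite[Prop.~7.3, Thm.~7.4]{ACR}, so no new comparison of the big logarithm with a Coleman map needs to be carried out here.
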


\begin{proof}
%This is a reformulation of  using the factorization of Theorem~\ref{thm:factor}.
Let $\mathbb{V}(\hf,g,g^*)$ and $\mathbb{V}(\hf,g,g^*)_{f}$ be as in $\S\ref{section:bottomclass}$ (corresponding to $\mathbb{V}_{\hf gg^*}^\dagger$ and $\mathbb{V}_{\hf}^{gg^*}$, respectively, in the notation of \cite[\S{8.2}]{ACR}). Then, identifying $G_{\bb{Q}_p}$ with $G_{K_{\overline{\frk{p}}}}$ via the composition of the embedding $\iota_p:\overline{\Q}\hookrightarrow\overline{\Q}_p$ fixed in the introduction with complex conjugation, we get an isomorphism of $\Lambda_\hf[G_{K_{\overline{\frk{p}}}}]$-modules
\begin{equation}\label{eq:gh_f}
\mathbb{V}(\hf,g,g^*)_{f}=\mathbb{V}_{\hf}^-\hat\otimes_{\mathcal{O}}T_g^+\otimes T_{g^*}^+(\epsilon_{\rm cyc}^{1-l}\bm{\kappa}_{f}^{-1/2})\simeq\mathscr{F}_{\overline{\fp}}^{\rm bal}(\Tsrep)\otimes\xi\Psi_S^{(\tau-1)/2}.
\end{equation}
By \cite[Thm.~7.4]{ACR}, after extending scalars to $\Z_p^{\rm ur}$, the composition of the $\Lambda_{\hf}$-linear map
\[
H^1(\Q_p,\mathbb{V}(\hf,g,g^*)_f)\longrightarrow\Lambda_{\hf}
\]
of [\emph{op.\,cit.}, Prop.~7.3] with the isomorphism $\Z_p^{\rm ur}\hat\otimes\Lambda_{\hf}\simeq\Z_p^{\rm ur}\dBr{\Gamma^{\rm ac}}$ given by $\gamma\mapsto\gamma^{\tau-1}$ sends the class
$\kappa^{(3)}(\hf,g,g^*)$ recalled in $\S\ref{section:bottomclass}$ to the product
\[
h_K\cdot\mathscr{L}_{\fp,\psi_0}^{\rm Katz}(K)^-\cdot\mathscr{L}_p({\rm ad}(g_K)\otimes\psi_0),
\]
noting that by Proposition~\ref{prop:congr} the first two factors generate the congruence ideal of $\hf$. Taking twists by $\xi$ and using the isomorphism
\[
H^1_{\rm Iw}(K_{\infty,\overline{\frk{p}}},\mathscr{F}_{\overline{\fp}}^{\rm bal}(\Tsrep)\otimes\xi)\simeq H^1(\Q_p,\mathbb{V}(\hf,g,g^*)_{f})
\]
induced by (\ref{eq:gh_f}) and using (\ref{eq:Log-factor}), the result follows.
\end{proof}

\begin{corollary}\label{cor:ERL}
The map $\mathfrak{Log}$ of Theorem~\ref{thm:ERL} satisfies
\[
\bigl(\mathfrak{Log}({\rm res}_{\overline{\fp}}(\kapinftys))^2\bigr)=\bigl(L_p({\rm ad}^0(g_K)\otimes\psi)\cdot\mathscr{L}_{\fp}^{\rm Katz}(\psi)^{-,\iota}\bigr)
\]
as ideals in $\Z_p^{\rm ur}\dBr{\Gamma^{\rm ac}}\otimes\Q_p$.
\end{corollary}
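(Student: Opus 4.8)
The plan is to deduce the Corollary formally from Theorem~\ref{thm:ERL}, Definition~\ref{def:L-psi}, and the twisted form of the factorization in Theorem~\ref{thm:factor}; essentially no new computation will be needed. First I would square the explicit reciprocity law of Theorem~\ref{thm:ERL} to obtain
\[
\mathfrak{Log}({\rm res}_{\overline{\fp}}(\kapinftys))^2=\bigl(h_K\cdot\mathscr{L}_{\fp,\psi}^{\rm Katz}(K)^-\cdot\mathscr{L}_p({\rm ad}(g_K)\otimes\psi)\bigr)^2,
\]
an identity in $\Z_p^{\rm ur}\dBr{\Gamma^{\rm ac}}$. Since the twisting homomorphism ${\rm Tw}_\xi$ is a ring isomorphism and the $p$-adic $L$-functions attached to $\psi$ are by definition the ${\rm Tw}_\xi$-images of those attached to $\psi_0$, the right-hand side is exactly $L_p({\rm ad}(g_K)\otimes\psi)$ in the sense of Definition~\ref{def:L-psi} (the twist of Definition~\ref{triple:modified}).

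Next I would transport the factorization of Theorem~\ref{thm:factor} along ${\rm Tw}_\xi$. Because all the $p$-adic $L$-functions appearing there were redefined for $\psi$ by applying the ring isomorphism ${\rm Tw}_\xi$, the factorization becomes
\[
L_p({\rm ad}(g_K)\otimes\psi)=u'\cdot L_p({\rm ad}^0(g_K)\otimes\psi)\cdot\mathscr{L}_{\fp}^{\rm Katz}(\psi)^{-,\iota}\cdot\tau_N,
\]
with $u'={\rm Tw}_\xi(u)\in(\Lambda^{\rm ac})^\times$ and $\tau_N=\prod_{\ell\mid N}\tau_\ell$ the product of the explicit nonzero rationals of Theorem~\ref{thm:hsieh}. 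Combining the two displays and passing to the ideals they generate in $\Z_p^{\rm ur}\dBr{\Gamma^{\rm ac}}\otimes\Q_p$ then yields the asserted equality, once one observes that $u'$ remains a unit there and that $\tau_N$, being a nonzero rational number, becomes a unit precisely after inverting $p$ --- which is exactly why the statement is phrased over $\Z_p^{\rm ur}\dBr{\Gamma^{\rm ac}}\otimes\Q_p$ rather than over $\Z_p^{\rm ur}\dBr{\Gamma^{\rm ac}}$ itself.

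I do not expect a genuine obstacle here: the substantive input has already been discharged by Theorem~\ref{thm:ERL} (the explicit reciprocity law) and Theorem~\ref{thm:factor} (the factorization), and this final step is pure bookkeeping. The only points I would take care to state explicitly are the compatibility of ${\rm Tw}_\xi$ with the various definitions --- so that squaring and factoring genuinely commute with the twist --- and the harmlessness of the constants $u'$ and $\tau_N$ after tensoring with $\Q_p$.
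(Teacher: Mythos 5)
Your proposal is correct and follows exactly the paper's route: the paper's proof of this corollary is simply to combine the explicit reciprocity law of Theorem~\ref{thm:ERL} with the factorization of Theorem~\ref{thm:factor}, which is what you do, just with the bookkeeping (squaring, the definition of $L_p({\rm ad}(g_K)\otimes\psi)$ via Definition~\ref{triple:modified} and the twist ${\rm Tw}_\xi$, and the harmlessness of $u$ and $\tau_N$ after inverting $p$) spelled out explicitly.
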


\begin{proof}
This is clear from Theorem~\ref{thm:ERL} and the factorization in Theorem~\ref{thm:factor}.
\end{proof}

\section{Verifying the hypotheses}\label{sec:verify}

%Let $K/\bb{Q}$ be an imaginary quadratic field of discriminant $-D$. Let $\psi$ be a Hecke character of $K$ of infinity type $(1-k,0)$ for some positive even integer $k\geq 2$, conductor $\frk{f}$ and central character $\varepsilon_K$. Let $g$ be a non-Eisenstein modular form of weight $l$ and character $\chi_g$. Throughout this section, we make the further assumption that $g$ is not of CM type.

Let $g$ and $\psi$ be as introduced in \S\ref{subsec:Galrep} and \S\ref{subsec:adjoint}, respectively. Recall that, given a rational prime $p$ and a sufficiently large finite extension $E/\bb{Q}_p$, we define the $G_K$-representation
\[
V={\rm ad}^0(V_g)(\psi_\mathfrak{P}^{-1})(1-k/2),
\]
where $\rho_g:G_\Q\rightarrow{\rm Aut}_E(V_g)\simeq{\rm GL}_2(E)$ is the $p$-adic Galois representation attached to $g$. The aim of this section is to give conditions under which the hypotheses in the general results of \cite{JNS} are satisfied for $V$. Let $K(p^\infty)^\circ$ denote the maximal abelian extension of $K$ unramified at primes not dividing $p$. Then, the crucial condition that we need to verify is the existence of an element $\sigma \in \Gal(\bar K/K(p^{\infty})^{\circ})$ such that $T/(\sigma-1)T$ is a free $\mathcal O$-module of rank one, where $\cl{O}$ is the ring of integers of $E$.
\begin{comment}
As shown in \cite[Prop. 4.1.1]{Loe}, such an element does not exist if we replace $T$ by $T_g \otimes T_{g^*}(\psi^{-1})(1-k/2)$, but we will check that under mild assumptions we can get this condition for the adjoint (three-dimensional) representation $T$.
\end{comment}

As in \cite[\S3.1]{Loe} we define an open subgroup $H_g\subseteq G_{\bb{Q}}$, a quaternion algebra $B_g$ and an algebraic group $G_g$. Let $H=H_g\cap G_{K(\frk{c})^\circ}$. Then we have an adelic representation
\[
\tilde{\rho}_{g}:H\longrightarrow G_g(\hat{\bb{Q}})
\]
and representations
\[
\tilde{\rho}_{g,p}: H\longrightarrow G_g(\bb{Q}_p)
\]
for every rational prime $p$, and, according to \cite[Thm. 2.2.2]{Loe}, for all but finitely many $p$ we can conjugate $\tilde{\rho}_{g,p}$ so that $\tilde{\rho}_{g,p}(H)=G_g(\bb{Z}_p)$.

Let $L$ be a finite extension of $K$ containing the Fourier coefficients of $g$ and the image of the Hecke character $\psi$. Let $\frk{P}$ be a prime of $L$ above some rational prime $p$, and let $E=L_\frk{P}$.

\begin{definition}\label{def:good}
We say that the prime $\frk{P}$ is \textit{good} if the following conditions hold:
\begin{itemize}
\item $p\geq 3$;
\item $p$ is unramified in $B_g$;
\item $p$ is coprime to $\frk{c}$ and $N_g$;
\item $\tilde{\rho}_{g,p}(H)=G_g(\bb{Z}_p)$;
\item $E=\bb{Q}_p$.
\end{itemize}
\end{definition}

\begin{lemma}
Assume that there is at least one prime which divides $D_K$ but not $N_g$. Then, if $\frk{P}$ is a good prime,
\[
\rho_{g,\frk{P}}(H\cap G_{K(p^\infty)^\circ})=\mathrm{SL}_2(\bb{Z}_p).
\]
\end{lemma}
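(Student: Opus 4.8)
The plan is to deduce the statement from the large-image results of \cite{Loe}: the hypothesis that $\frk{P}$ is good controls the image of $H$ via \cite[Thm.~2.2.2]{Loe}, the abelianness of $K(p^\infty)^\circ$ over $K$ lets one pass to the subgroup $H\cap G_{K(p^\infty)^\circ}$, and the hypothesis that some prime divides $D$ but not $N_g$ is exactly what guarantees that this passage does not destroy the $\mathrm{SL}_2$-part of the image.

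First I would record the consequences of $\frk{P}$ being good. Since $E=\Q_p$ and $p$ is unramified in $B_g$, the quaternion algebra $B_g$ splits at $p$, so $G_g\times_{\Q}\Q_p$ is a product of $\mathrm{GL}_2$ with a split torus and its derived group $G_g^{\mathrm{der}}\times_{\Q}\Q_p$ is isomorphic to $\mathrm{SL}_2$. Together with $\tilde{\rho}_{g,p}(H)=G_g(\Z_p)$ and $p\geq 3$, projecting onto the $\mathrm{GL}_2$-factor (the standard two-dimensional representation of $G_g$ underlying $\rho_{g,\frk{P}}$) shows that, after conjugating $\rho_{g,\frk{P}}$ appropriately, $\Gamma:=\rho_{g,\frk{P}}(H)$ is a closed subgroup of $\mathrm{GL}_2(\Z_p)$ containing $\mathrm{SL}_2(\Z_p)$; equivalently, $\Gamma=\{A\in\mathrm{GL}_2(\Z_p)\colon\det A\in\Delta\}$ for the open subgroup $\Delta:=\det\Gamma$ of $\Z_p^\times$, where $\det\rho_{g,\frk{P}}$ is, up to the fixed normalization, $\epsilon_{\cyc}^{l-1}\chi_g$.

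Next I would estimate the image of $H\cap G_{K(p^\infty)^\circ}$ from both sides. On the one hand, by construction of the fields $K(\frk{f})^\circ$ and $K(p^\infty)^\circ$ the character $\det\rho_{g,\frk{P}}$ is trivial on $H\cap G_{K(p^\infty)^\circ}$ — its nebentypus part is killed already over $K(\frk{f})^\circ\supseteq$ the fixed field of $H$, and its cyclotomic part over $K(p^\infty)^\circ\supseteq\Q(\mu_{p^\infty})$ — so $\rho_{g,\frk{P}}(H\cap G_{K(p^\infty)^\circ})\subseteq\mathrm{SL}_2(\Z_p)$. On the other hand, $K(p^\infty)^\circ/K$ abelian and $H\subseteq G_K$ give $\overline{[H,H]}\subseteq H\cap G_{K(p^\infty)^\circ}$, so $\rho_{g,\frk{P}}(H\cap G_{K(p^\infty)^\circ})\supseteq\overline{[\Gamma,\Gamma]}$. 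It therefore suffices to prove $\overline{[\Gamma,\Gamma]}=\mathrm{SL}_2(\Z_p)$.

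For $p\geq 5$ this is automatic, since $\mathrm{SL}_2(\Z_p)$ is topologically perfect, and the lemma follows without using the hypothesis on $D$. For $p=3$ the only possible loss is the quotient $\mathrm{SL}_2(\Z_3)^{\mathrm{ab}}\simeq\Z/3$, and this is where the hypothesis enters, exactly as in \cite{Loe}: fixing $\ell\mid D$ with $\ell\nmid N_g$, we have $\ell\neq p$ because $p$ splits in $K$, so $\overline{\Q}^{H}$ is unramified at $\ell$ whereas $K/\Q$ is ramified at $\ell$; since the only subextension of $K$ unramified at $\ell$ is $\Q$, this forces $K$ to be linearly disjoint over $\Q$ from the field cut out by $\bar{\rho}_{g,\frk{P}}$, which prevents the cubic extension of $K$ realizing the exceptional quotient of $\mathrm{SL}_2(\Z_3)$ from lying inside $\overline{\Q}^{H}\cdot K(p^\infty)^\circ$. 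Consequently $\rho_{g,\frk{P}}(H\cap G_{K(p^\infty)^\circ})$ still surjects onto that quotient, so $\overline{[\Gamma,\Gamma]}=\mathrm{SL}_2(\Z_3)$, and combining the two inclusions proves the lemma. I expect this last step — showing that restriction to the large abelian extension $K(p^\infty)^\circ$ does not shrink the $\mathrm{SL}_2$-part of the image, which is genuinely delicate only when $p=3$ — to be the main obstacle, the hypothesis on $D$ being precisely the input that removes it.
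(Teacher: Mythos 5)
Your proposal takes a genuinely different route from the paper's: the paper proves this by running the linear-disjointness/ramification argument of \cite[Lem.~5.9]{ACR} (modelled on \cite{Loe}), where the hypothesis on a prime $\ell\mid D$, $\ell\nmid N_g$, is used for \emph{every} $p\geq 3$ to show that $K$ (hence the abelian extension $K(p^\infty)^\circ$, which is ramified at $\ell$) is linearly disjoint from the relevant splitting fields attached to $\rho_{g,\frk{P}}$ and the inner-twist/cyclotomic characters (all unramified at $\ell$), so that intersecting $H$ with $G_{K(p^\infty)^\circ}$ only imposes the determinant condition and the image is computed inside $G_g(\Z_p)$ as the subgroup with trivial similitude coordinate, i.e. $\mathrm{SL}_2(\Z_p)$, uniformly in $p$. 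Your commutator argument does work for $p\geq 5$ (with two small repairs: $\chi_g$ is trivial on $H$ not because of $K(\frk{f})^\circ$ --- your inclusion ``$K(\frk{f})^\circ\supseteq$ fixed field of $H$'' is backwards --- but because $H\subseteq H_g$ and $H_g$ lies in the kernel of the inner-twist characters, among them the one attached to complex conjugation, which is $\chi_g^{-1}$ when $\chi_g\neq 1$; and one must also know $\Q(\mu_{p^\infty})\subseteq K(p^\infty)^\circ$, which holds for the definition used in \cite{ACR}), and it has the mild interest of not needing the hypothesis on $D$ in that range.

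The genuine gap is the case $p=3$, which is allowed by Definition~\ref{def:good}. The group $\overline{[\Gamma,\Gamma]}$ depends only on $\Gamma=\rho_{g,\frk{P}}(H)$, and goodness gives $\Gamma=\{A\in\mathrm{GL}_2(\Z_3)\colon \det A\in(\Z_3^\times)^{l-1}\}$. If $l$ is odd, then $(\Z_3^\times)^{l-1}\subseteq 1+3\Z_3$, so every element of $\Gamma$ reduces into $\mathrm{SL}_2(\F_3)$ and conjugation by $\Gamma$ acts trivially on $\mathrm{SL}_2(\F_3)^{\mathrm{ab}}\simeq\Z/3\Z$; consequently $\overline{[\Gamma,\Gamma]}$ is contained in the index-$3$ subgroup $\ker\bigl(\mathrm{SL}_2(\Z_3)\rightarrow\Z/3\Z\bigr)$ and is \emph{never} equal to $\mathrm{SL}_2(\Z_3)$. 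Thus your concluding claim ``$\overline{[\Gamma,\Gamma]}=\mathrm{SL}_2(\Z_3)$'' is false precisely in the cases where your patch is invoked, and no disjointness statement about $K$ can rescue it, since it concerns $\Gamma$ alone. What you would actually have to prove is that $\rho_{g,\frk{P}}(H\cap G_{K(p^\infty)^\circ})$ itself surjects onto the $\Z/3\Z$ quotient; your sketch does not do this, and its key assertion that $\overline{\Q}^{H}$ is unramified at $\ell$ is false, because the fixed field of $H$ contains $K(\frk{f})^\circ\supseteq K$, which is ramified at $\ell\mid D$. Supplying that surjectivity is exactly the field-theoretic step performed in \cite[Lem.~5.9]{ACR} and \cite{Loe}, so the $p=3$ case of your argument ultimately has to fall back on the paper's method.
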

\begin{proof}
The proof of this result is very similar to the proof of \cite[Lem. 8.9]{ACR}. We include it here for the convenience of the reader.

Let $\bb{Q}(\rho_g)$ be the Galois extension of $\bb{Q}$ cut out by the representations $\rho_g$. It is unramified outside $pN_g$. Therefore, the condition on $D_K$ implies that $K\cap \bb{Q}(\rho_g)=\bb{Q}$. Moreover, since any Galois extension of $\bb{Q}$ contained in the anticyclotomic $\bb{Z}_p$-extension $K_\infty$ of $K$ must itself contain $K$, we also have $K_\infty\cap\bb{Q}(\rho_g)=\bb{Q}$.

The conditions on $\frk{P}$ imply that
\[
\rho_{g,\frk{P}}(H\cap G_{\bb{Q}(\mu_{p^\infty})})=\mathrm{SL}_2(\bb{Z}_p),
\]
and, from the remarks in the previous paragraph, it follows that
\[
\rho_{g,\frk{P}}(H\cap G_{K_\infty(\mu_{p^\infty})})=\mathrm{SL}_2(\bb{Z}_p).
\]
Finally, since $H\cap G_{K(p^\infty)^\circ}$ is a normal subgroup of $H\cap G_{K_\infty(\mu_{p^\infty})}$ of index dividing $p-1$ and there are no such subgroups in $\mathrm{SL}_2(\bb{Z}_p)$, the lemma follows.
\end{proof}

Now fix a good prime $\frk{P}$ and define $\bb{Z}_p[G_K]$-modules $T=\ad^0 T_g(\psi_{\frk{P}}^{-1})(1-k/2)$ and $T'= \bb{Z}_p(\psi_{\frk{P}}^{-1})(1-k/2)$. Let $V=T\otimes \bb{Q}_p$ and $V'=T'\otimes \bb{Q}_p$.

\begin{proposition}\label{prop:existence-of-sigma}
Assume that there is at least one prime which divides $D_K$ but not $N_g$. Suppose that there exists $\eta\in G_{K(p^\infty)^\circ}$ such that $\chi_g(\eta)\psi_\frk{P}(\eta)$ is a square modulo $p$ and $\psi_{\frk{P}}(\eta)^2\neq 1$ modulo $p$. Then there exists $\sigma\in G_{K(p^\infty)^\circ}$ such that
\begin{itemize}
\item $T/(\sigma-1)T$ is free of rank 1 over $\bb{Z}_p$,
\item $\sigma-1$ acts invertibly on $T'$.
\end{itemize}
\end{proposition}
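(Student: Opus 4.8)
The plan is to exhibit $\sigma$ of the form $\eta h$, where $\eta$ is the element provided by the hypothesis and $h\in H\cap G_{K(p^\infty)^\circ}$ is chosen, by means of the preceding lemma, so that $\rho_g(\sigma)$ equals a prescribed regular semisimple diagonal matrix while $\chi'(\sigma)$ stays under control. Here I write $\chi'=\psi_{\frk{P}}^{-1}\varepsilon_{\cyc}^{1-k/2}$ for the character through which $G_K$ acts on $T'$, so that $T=\ad^0(T_g)\otimes\chi'$ and $T'=\bb{Z}_p(\chi')$ (recall $\cO=\bb{Z}_p$ as $\frk P$ is good). A computation with the explicit normalization of $\det\rho_g$ and the definition of $K(p^\infty)^\circ$ shows that the two conditions imposed on $\eta$ amount to
\[
\det\rho_g(\eta)\cdot\chi'(\eta)\ \text{is a square in }\bb{F}_p^\times,\qquad\text{and}\qquad\chi'(\eta)^2\not\equiv 1\pmod p.
\]

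First I would invoke the preceding lemma to get $\rho_g(H\cap G_{K(p^\infty)^\circ})=\SL_2(\bb{Z}_p)$ in a suitable $\bb{Z}_p$-basis of $T_g$. Since $\SL_2(\bb{Z}_p)$ is perfect for $p\geq 5$ (the case $p=3$ needs only a minor modification using the index-$3$ derived subgroup), Goursat's lemma shows that $(\rho_g,\chi')$ carries $H\cap G_{K(p^\infty)^\circ}$ onto the product $\SL_2(\bb{Z}_p)\times\chi'(H\cap G_{K(p^\infty)^\circ})$; in particular, for every $g_0\in\SL_2(\bb{Z}_p)$ there is $h$ with $\rho_g(h)=g_0$ and $\chi'(h)=1$. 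Put $d=\det\rho_g(\eta)\in\bb{Z}_p^\times$. By the first displayed condition the unit $d\,\chi'(\eta)^{-1}$, which lies in the same square class as $d\,\chi'(\eta)$, is a square in $\bb{F}_p^\times$, hence in $\bb{Z}_p^\times$ since $1+p\bb{Z}_p$ is uniquely $2$-divisible; choose $s\in\bb{Z}_p^\times$ with $s^2=d\,\chi'(\eta)^{-1}$ and set $s'=d/s$, so that $D:=\diag(s,s')$ has $\det D=d$ and $s/s'=\chi'(\eta)^{-1}$. Then $\rho_g(\eta)^{-1}D\in\SL_2(\bb{Z}_p)$, so I may pick $h$ with $\rho_g(h)=\rho_g(\eta)^{-1}D$ and $\chi'(h)=1$; setting $\sigma=\eta h\in G_{K(p^\infty)^\circ}$ gives $\rho_g(\sigma)=D$ and $\chi'(\sigma)=\chi'(\eta)$.

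Finally I would read off the eigenvalues. Conjugation by $D=\diag(s,s')$ acts diagonally on a $\bb{Z}_p$-basis of $\ad^0(T_g)$ with eigenvalues $1$, $s/s'=\chi'(\eta)^{-1}$, $s'/s=\chi'(\eta)$; twisting by $\chi'(\sigma)=\chi'(\eta)$, the element $\sigma$ acts on $T=\ad^0(T_g)\otimes\chi'$ diagonally, in a $\bb{Z}_p$-basis, with eigenvalues $1$, $\chi'(\eta)^2$, $\chi'(\eta)$, and on $T'$ by $\chi'(\eta)$. The second displayed condition $\chi'(\eta)^2\not\equiv 1\pmod p$ forces both $\chi'(\eta)\not\equiv 1$ and $\chi'(\eta)^2\not\equiv 1$ modulo $p$, so the eigenvalue $1$ of $\sigma$ on $T$ occurs with multiplicity exactly one and $\sigma-1$ is a unit on the other two eigenlines; therefore $T/(\sigma-1)T\cong\bb{Z}_p$ is free of rank one, while $\sigma-1$ acts on $T'$ as multiplication by the unit $\chi'(\eta)-1$, hence invertibly. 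I expect the only step that is not purely formal to be the translation recorded in the first paragraph — matching the intrinsic requirement that $\det\rho_g(\eta)\,\chi'(\eta)$ be a square with the stated hypothesis on $\chi_g(\eta)\,\psi_{\frk{P}}(\eta)$ — which rests on the precise normalization of $\det\rho_g$ and on the behaviour of the cyclotomic character modulo $p$ on $G_{K(p^\infty)^\circ}$; the construction of $\sigma$ via the perfectness of $\SL_2(\bb{Z}_p)$ is the only other point requiring (routine) care.
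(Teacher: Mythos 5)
Your proof is correct and follows essentially the same route as the paper's: both invoke the preceding lemma to realize $\sigma$ in the coset of $\eta$ with $\rho_{g,\frk{P}}(\sigma)=\mathrm{diag}(x,x^{-1}\chi_g(\eta))$ where $x^2=\chi_g(\eta)\psi_{\frk{P}}(\eta)$, and then read off the eigenvalues $1$, $\psi_{\frk{P}}^{-1}(\eta)$, $\psi_{\frk{P}}^{-2}(\eta)$ on $T$ and $\psi_{\frk{P}}^{-1}(\eta)$ on $T'$. The only divergence is your Goursat/perfectness step (with its caveat at $p=3$), which the paper sidesteps because $\psi_{\frk{P}}$ and $\varepsilon_{\mathrm{cyc}}$ are already constant on the coset $\eta H\cap G_{K(p^\infty)^\circ}$ by the very definition of $H$, so the control of the abelian character comes for free.
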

\begin{proof}
We closely follow the proof of \cite[Prop. 5.2.1]{LZ} (see also the proof of \cite[Lem 5.10]{ACR} and \cite[Prop. 4.2.1]{Loe}). By the previous lemma the image of $\eta H\cap G_{K(p^\infty)^\circ}$ under $\rho_{g,\frk{P}}$ contains all the elements of the form
\[
\begin{pmatrix} x & 0 \\ 0 & x^{-1}\chi_g(\eta)\end{pmatrix},\quad x\in \bb{Z}_p^\times.
\]
Choose $x$ such that $x^2=\chi_g(\eta)\psi_{\frk{P}}(\eta)$. Choose $\sigma\in \eta H\cap G_{K(p^\infty)^\circ}$ whose image under $\rho_{g,\frk{P}}$ is given by the element above, with the choice of $x$ which we have just specified. Then, the eigenvalues of $\sigma$ acting on $T$ are $1$, $\psi_{\frk{P}}^{-1}(\eta)$ and $\psi_{\frk{P}}^{-2}(\eta)$ and the eigenvalue of $\sigma$ acting on $T'$ is $\psi_{\frk{P}}^{-1}(\eta)$. The result follows from the assumptions on $\eta$.
\end{proof}

\section{Applications}\label{sec:applications}

Let $g\in S_l(N_g,\chi_g)$ and $\psi$ be a Hecke character of $K$ of infinity type $(1-k,0)$ for some even integer $k\geq 2$ as introduced in $\S\ref{sec:Selmer}$, and recall that we consider the $E$-valued $G_K$-representation $V$ in Definition~\ref{def:V}.
%\[
%V:=\ad^0(V_g)(\psi_{\frk{P}}^{-1})(1-k/2),
%\]
%where $\ad^0(V_g)\subset{\rm End}_E(V_g)$ is the adjoint representation on the trace $0$ endomorphisms of $V_g$.
We begin by collecting a set of hypotheses for our later reference.

\begin{hypotheses}\label{hyp:h1-h6}
\hfill
\begin{itemize}
\item[{\rm (h1)}] $p$ splits in $K$,
\item[{\rm (h2)}] $p\nmid h_K$,
\item[{\rm (h3)}] the conditions in Proposition~\ref{prop:congr} hold,
\item[{\rm (h4)}] $g$ is ordinary at $p$ and non-Eisenstein mod $p$,
\item[{\rm (h5)}] $g$ is not of CM type,
\item[{\rm (h6)}] $\mathfrak{P}$ is a good prime in the sense of Definition~\ref{def:good},
\item[{\rm (h7)}] the conditions in Proposition~\ref{prop:existence-of-sigma} hold.
\end{itemize}
\end{hypotheses}

\subsection{The Bloch--Kato conjecture}

We begin with a standard lemma, whose proof follows from the same argument as in \cite[Lem.~9.1]{ACR}.

\begin{lemma}\label{lem:BK}
The Bloch--Kato Selmer group of $V$ is given by
\[
\Sel(K,V)\simeq
\begin{cases}
\Sel_{\rm bal}(K,V) &\textrm{if $2\leq k < 2l$,}\\[0.2em]
\Sel_{\rm unb}(K,V) &\textrm{if $k \geq 2l$.}
\end{cases}
\]
\end{lemma}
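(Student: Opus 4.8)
The plan is to follow the proof of \cite[Lem.~9.1]{ACR} and reduce everything to a comparison of local conditions at the primes above $p$. First I would observe that $\Sel(K,V)$, $\Sel_{\mathrm{bal}}(K,V)$ and $\Sel_{\mathrm{unb}}(K,V)$ are defined by identical local conditions at every place $v\nmid p$: there is no condition at the (complex) archimedean places, and at the finite $v\nmid p$ all three use the unramified condition $\ker\bigl(H^1(K_v,V)\to H^1(K_v^{\mathrm{nr}},V)\bigr)$, which is precisely the Bloch--Kato finite part $H^1_{\mathrm f}(K_v,V)$. Thus the lemma amounts to the assertion that, for $v\mid p$, one has $H^1_{\mathrm f}(K_v,V)=H^1_{\mathrm{unb}}(K_v,V)$ when $k\geq 2l$ and $H^1_{\mathrm f}(K_v,V)=H^1_{\mathrm{bal}}(K_v,V)$ when $2\leq k<2l$.

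The main tool is the standard identification of the Bloch--Kato finite part with a Greenberg-style (``Panchishkin'') local condition, see \cite{BK} and \cite{Gr94}: if $W$ is a de Rham $G_{\mathbb{Q}_p}$-representation with a $G_{\mathbb{Q}_p}$-stable subspace $W^+$ such that every Hodge--Tate weight of $W^+$ is negative, every Hodge--Tate weight of $W/W^+$ is non-negative, and $H^0(\mathbb{Q}_p,W/W^+)=0=H^0(\mathbb{Q}_p,(W^+)^\vee(1))$, then
\[
H^1_{\mathrm f}(\mathbb{Q}_p,W)=\ker\bigl(H^1(\mathbb{Q}_p,W)\to H^1(\mathbb{Q}_p,W/W^+)\bigr)=\im\bigl(H^1(\mathbb{Q}_p,W^+)\to H^1(\mathbb{Q}_p,W)\bigr);
\]
in particular $W^+=W$ gives $H^1_{\mathrm f}=H^1$ (all Hodge--Tate weights negative, no exceptional zero) and $W^+=0$ gives $H^1_{\mathrm f}=0$ (all Hodge--Tate weights non-negative, $H^0(\mathbb{Q}_p,W)=0$). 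I would then compute the Hodge--Tate weights of $V$ along $\iota_p$: using that $V_g^-$ is unramified (Hodge--Tate weight $0$) and $V_g^+$ has Hodge--Tate weight $1-l$, that $\ad^0(V_g)$ has weights $\{-(l-1),0,l-1\}$, and the Hodge--Tate weights of $\psi_{\mathfrak P}$ and of the Tate twist $(1-k/2)$, one finds that the weights of $V$ at $\fp$ and $\overline{\fp}$ are the multisets $\{-k/2-(l-1),\,-k/2,\,-k/2+(l-1)\}$ and $\{k/2-l,\,k/2-1,\,k/2+l-2\}$ (which are interchanged, up to the Tate twist, by the conjugate self-duality of $V$ --- a useful consistency check). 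The key point is that exactly one of these multisets has all entries $\geq 0$ and the other all entries $\leq -1$ precisely when $k\geq 2l$, whereas when $2\leq k<2l$ each multiset splits into a negative part and a non-negative part.

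With these in hand the two cases are immediate. If $k\geq 2l$, I would apply the comparison with $W^+=V=\mathscr{F}_{\fp}^{\mathrm{unb}}(V)$ at $\fp$ and $W^+=0=\mathscr{F}_{\overline{\fp}}^{\mathrm{unb}}(V)$ at $\overline{\fp}$ to get $H^1_{\mathrm f}(K_v,V)=H^1_{\mathrm{unb}}(K_v,V)$, whence $\Sel(K,V)=\Sel_{\mathrm{unb}}(K,V)$. If $2\leq k<2l$, a direct inspection of $(\ref{eq:local-bal})$ --- using the Hodge--Tate weights of $V_g^{\pm}$ and of $\psi_{\mathfrak P}$ --- shows that at each of $v=\fp,\overline{\fp}$ the subspace $\mathscr{F}_v^{\mathrm{bal}}(V)$ is exactly the part of $V$ carrying the negative Hodge--Tate weights, with $V/\mathscr{F}_v^{\mathrm{bal}}(V)$ carrying the non-negative ones; the comparison then gives $H^1_{\mathrm f}(K_v,V)=H^1_{\mathrm{bal}}(K_v,V)$, whence $\Sel(K,V)=\Sel_{\mathrm{bal}}(K,V)$. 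The step I expect to be the main obstacle is verifying the auxiliary vanishing $H^0(K_v,W/W^+)=0=H^0(K_v,(W^+)^\vee(1))$ needed to invoke the comparison. Away from a finite set of boundary weights (those for which one of the relevant graded pieces acquires Hodge--Tate weight $0$, e.g. $k=2$, $k=2l-2$, $k=2l$) this is automatic, since a nonzero invariant would force a trivial --- hence unramified, hence Hodge--Tate weight $0$ --- subquotient. At the boundary weights the potentially offending line is a twist of one of the graded pieces $\Hom(V_g^{\pm},V_g^{\mp})$ of $\ad^0(V_g)$, and I would argue as in \cite[Lem.~9.1]{ACR} that the hypotheses that $g$ is non-Eisenstein mod $p$ and not of CM type force this to be a nontrivial character of the relevant decomposition group, so that no trivial sub or quotient of $V\vert_{G_{K_v}}$ arises (equivalently, the corresponding crystalline Frobenius eigenvalue is $\neq 1$). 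Granting this, the remainder is routine Hodge--Tate weight bookkeeping.
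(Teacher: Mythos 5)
Your argument is correct and is essentially the paper's own: the paper proves this lemma simply by invoking the same argument as \cite[Lem.~9.1]{ACR}, which is precisely the comparison you carry out, namely identifying the Bloch--Kato local condition at the primes above $p$ with the Greenberg/Panchishkin-type conditions $\mathscr{F}_v^{\rm unb}$ (for $k\geq 2l$) and $\mathscr{F}_v^{\rm bal}$ (for $2\leq k<2l$) via the Hodge--Tate weights of $V$ at $\fp$ and $\overline{\fp}$, all other local conditions being identical. Your weight bookkeeping and the exceptional-zero caveat at the boundary weights match what that cited argument requires, so no further changes are needed.
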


Let $\kapinftys$ be as in (\ref{eq:kapinftys}), and denote by
\[
\kaps\in{\rm Sel}_{\rm bal}(K,T)
\]
the image of $\kapinftys$ under the corestriction $H^1_{\rm Iw}(K_\infty,T)\rightarrow H^1(K,T)$.

\begin{theorem}\label{thm:A}
Assume hypotheses (h1)--(h7). Then the following implication holds:
\[
\kaps \neq 0\quad\Longrightarrow\quad{\rm dim}_E\,\Sel_{\Gr}(K,V)=1.
\]
In particular, if $2\leq k < 2l$ and $\kaps \neq 0$ then the Bloch--Kato Selmer group $\Sel(K,V)$ is one-dimensional.
\end{theorem}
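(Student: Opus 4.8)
The plan is to deduce this from the general Euler system machinery of Jetchev--Nekov\'{a}\v{r}--Skinner \cite{JNS}, applied to the anticyclotomic Euler system $\{\kappa_m:m\in\mathcal{S}'\}$ for $T$ produced in Theorem~\ref{thm:ES-T}. That system has bottom class $\kappa_1=\kapinftyones$, whose corestriction to $K$ is by construction $\kaps$, and its norm relations feature exactly the Euler factors $P_{\frk{q}}(V;\Fr_{\frk{q}}^{-1})$ attached to $V$ --- this was the whole point of the modification carried out in $\S\ref{subsec:modified}$. Moreover, by Theorem~\ref{thm:ESwithextrafactor} and \cite[Prop.~6.6]{ACR} these classes land in the balanced Selmer groups $\Sel_{\rm bal}(K[mp^\infty],T)$, so the local conditions they propagate are precisely those defining $\Sel_{\Gr}$; in particular, at the primes above $p$ they satisfy the Greenberg/Panchishkin condition coming from $\mathscr{F}_v^{\rm bal}$.

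First I would verify that the running hypotheses of \cite{JNS} hold in our situation. The crucial ``big image'' input --- an element $\sigma\in G_{K(p^\infty)^\circ}$ with $T/(\sigma-1)T$ free of rank one over $\cO$ and $\sigma-1$ acting invertibly on $T'$ --- is furnished by Proposition~\ref{prop:existence-of-sigma} under (h6) and (h7); together with Remark~\ref{rem:sigma} and Chebotarev this also shows that the set $\mathcal{P}'$ of Definition~\ref{def:P'} has positive density, so the Euler system has enough primes for Kolyvagin's argument. The hypotheses (in particular (h4), (h6), (h7)) also ensure the torsion-freeness of $H^1(K[mp^s],T)$ required in Theorem~\ref{thm:ES-T}, and in particular force that $\kaps\ne 0$ already implies the image of $\kaps$ in $H^1(K,V)$ is nonzero.

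With this in place, the argument is the standard two-sided bound. For the lower bound, $\kaps$ itself lies in $\Sel_{\Gr}(K,T)$ and has nonzero image in $H^1(K,V)$, hence defines a nonzero element of $\Sel_{\Gr}(K,V)$, so $\dim_E\Sel_{\Gr}(K,V)\geq 1$. For the upper bound, I would feed $\{\kappa_m\}$ into the Kolyvagin derivative construction of \cite{JNS}: using the non-vanishing of the bottom class together with the big-image element $\sigma$, one bounds $\dim_E\Sel_{\Gr}(K,V)\leq 1$. Combining the two inequalities gives $\dim_E\Sel_{\Gr}(K,V)=1$. The final assertion then follows from Lemma~\ref{lem:BK}, which for $2\leq k<2l$ identifies $\Sel(K,V)$ with $\Sel_{\rm bal}(K,V)=\Sel_{\Gr}(K,V)$, so the Bloch--Kato Selmer group is one-dimensional.

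The hard part will be matching our Euler system to the precise axiomatic setup of \cite{JNS}: one must confirm that the anticyclotomic norm relations of Theorem~\ref{thm:ES-T}, the Greenberg local conditions at $p$, and the big-image element of Proposition~\ref{prop:existence-of-sigma} together satisfy the exact hypotheses under which \cite{JNS} establishes the rank bound --- including the relevant local and global Euler-characteristic conditions that place us in the ``rank one'' case, which here is governed by $V$ being conjugate self-dual with $s=0$ its central critical point. Once this dictionary is in place, the deduction above is formal.
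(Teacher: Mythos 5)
Your proposal follows essentially the same route as the paper: the paper's proof likewise consists of applying the Jetchev--Nekov\'a\v{r}--Skinner machinery to the Euler system of Theorem~\ref{thm:ES-T}, invoking Proposition~\ref{prop:existence-of-sigma} (with Remark~\ref{rem:sigma} for the positive density of $\mathcal{P}'$) to verify the big-image hypotheses, and deducing the Bloch--Kato statement from Lemma~\ref{lem:BK}. Your write-up is correct and only spells out in more detail the verification that the paper states in a few lines.
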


\begin{proof}
This follows from the general theory of anticyclotomic Euler systems developed in \cite{JNS} (see \cite[\S{8}]{ACR} for a summary) applied to the Euler system constructed in Theorem~\ref{thm:ES-T}. By Proposition~\ref{prop:existence-of-sigma}, Hypotheses~\ref{hyp:h1-h6} give sufficient conditions for the general results of \cite{JNS} to apply in our case. Note also that for the application of these results it suffices to have an anticyclotomic Euler system consisting of classes indexed by squarefree products of primes $q$ in a positive density set $\mathcal{P}'$ of primes split in $K$, as is the case for the anticyclotomic Euler system of Theorem~\ref{thm:ES-T} (see Remark \ref{rem:sigma}).
\end{proof}

%Recall that $\theta_\psi\in S_k(\Gamma_0(N_\psi))$ is the theta series attached to $\psi$, and we have the decomposition
%\[
%V_{g,g^*}^\psi=V\oplus V',
%\]
%where $g\in S_l(N_g,\chi_g)$, $V={\rm ad}^0(V_g)(\psi_\mathfrak{P}%^{-1})(-n)$ and $V'=E(\psi_\mathfrak{P}^{-1})(-n)$.

%As usual, put $N=\lcm(N_g,N_\psi)$.

%{\color{blue} falta fer retocs per aqui  d'acord amb la definicio (\ref{eq:tw})}

Theorem~\ref{thm:A} can be viewed as a result towards the Bloch--Kato conjecture for $V$ in rank $1$. The next result establishes cases of the same conjecture in rank $0$.

\begin{theorem}\label{thm:thmC}
Assume hypotheses (h1)--(h7), and in addition that:
\begin{itemize}
\item $\varepsilon_{\ell}(V_{fgg^\ast}) = +1$ for all primes $\ell\mid N$,
\item ${\rm gcd}(N_g,N_\psi)$ is squarefree,
\item $L(\theta_\psi,k/2) \neq 0$.
\end{itemize}
If $k\geq 2l$ then the following implication holds:
\[
L(V,0)\neq 0\quad\Longrightarrow\quad \Sel(K,V)=0.
\]
\end{theorem}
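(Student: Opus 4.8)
The plan is to combine the explicit reciprocity law of Corollary~\ref{cor:ERL} with Theorem~\ref{thm:A} (equivalently the Euler system machinery of \cite{JNS} as packaged in Theorem~\ref{thm:ES-T}), reducing the statement to showing that the hypothesis $L(V,0)\neq 0$ forces the bottom Euler system class $\kaps$ to be nonzero. More precisely, suppose $L(V,0)\neq 0$. Since $k\geq 2l$, the point $s=0$ lies in the range of interpolation of $L_p({\rm ad}^0(g_K)\otimes\psi)$ (it corresponds to the trivial character of $\Gamma^{\rm ac}$, which is crystalline at both $\fp$ and $\bar\fp$ and of the right infinity type up to the fixed twist $\xi$), and by the interpolation formula of Theorem~\ref{thm:sym} together with Definition~\ref{def:L-psi}, the value $L_p({\rm ad}^0(g_K)\otimes\psi)$ at this character is a nonzero multiple of $L(V,0)$, provided the auxiliary Euler-type factors $\mathcal{E}_p$ and the CM period ratio are nonzero. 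Hence $L_p({\rm ad}^0(g_K)\otimes\psi)\neq 0$ as an element of ${\rm Frac}\,\Lambda^{\rm ac}$. The hypothesis $L(\theta_\psi,k/2)\neq 0$ enters to guarantee that the Katz factor $\mathscr{L}_{\fp}^{\rm Katz}(\psi)^{-,\iota}$ is also nonzero: indeed $L(\theta_\psi,k/2)$ is (up to nonzero constants and local factors) the value of $L(\psi,s)$ — equivalently of the Hecke $L$-function interpolated by Katz's $p$-adic $L$-function — at the relevant point, so its nonvanishing prevents $\mathscr{L}_{\fp}^{\rm Katz}(\psi)^{-,\iota}$ from being identically zero. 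Therefore the right-hand side of Corollary~\ref{cor:ERL}, $L_p({\rm ad}^0(g_K)\otimes\psi)\cdot\mathscr{L}_{\fp}^{\rm Katz}(\psi)^{-,\iota}$, is a nonzero element of $\Z_p^{\rm ur}\dBr{\Gamma^{\rm ac}}\otimes\Q_p$.

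Next I would propagate this nonvanishing down to the bottom class. By Corollary~\ref{cor:ERL} we get $\mathfrak{Log}({\rm res}_{\overline{\fp}}(\kapinftys))\neq 0$, hence ${\rm res}_{\overline{\fp}}(\kapinftys)\neq 0$ in $H^1_{\rm Iw}(K_{\infty,\overline{\fp}},\mathscr{F}_{\overline{\fp}}^{\rm bal}(\Trep))$ — here one uses that $\mathfrak{Log}$ is injective (Theorem~\ref{thm:ERL}) — and a fortiori $\kapinftys\neq 0$ in ${\rm Sel}_{\rm bal}(K_\infty,T)$. It remains to descend from the $\Lambda^{\rm ac}$-adic class $\kapinftys$ to its image $\kaps\in{\rm Sel}_{\rm bal}(K,T)$ under corestriction to the bottom layer. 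For this I would evaluate the reciprocity law at the trivial character: the specialization of $\mathfrak{Log}({\rm res}_{\overline{\fp}}(\kapinftys))$ at the trivial character of $\Gamma^{\rm ac}$ equals (a Bloch--Kato dual-exponential or Perrin-Riou-type value of) ${\rm res}_{\overline{\fp}}(\kaps)$, and by the interpolation property this specialization is a nonzero multiple of $L(V,0)\cdot(\text{Katz value})\neq 0$. Hence ${\rm res}_{\overline{\fp}}(\kaps)\neq 0$, so $\kaps\neq 0$ in $\Sel_{\rm bal}(K,T)$, and therefore its image in $\Sel_{\rm bal}(K,V)$ is nonzero. Applying Theorem~\ref{thm:A} gives ${\rm dim}_E\,\Sel_{\Gr}(K,V)=1$.

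Finally, to obtain the vanishing statement I would invoke the anticyclotomic Euler system argument in its sharp form. When $k\geq 2l$, Lemma~\ref{lem:BK} identifies $\Sel(K,V)$ with $\Sel_{\rm unb}(K,V)$. The Euler system of Theorem~\ref{thm:ES-T} lives in $\Sel_{\Gr}(K[mp^\infty],T)$, and by \cite{JNS} a nonzero bottom class $\kaps$ bounds the complementary (in this case unbalanced) Selmer group: the nonvanishing of $\kaps$ together with the local condition at $\overline\fp$ satisfied by $\kaps$ forces $\Sel_{\rm unb}(K,V)=0$. Concretely, $\kaps$ being a nonzero class in $\Sel_{\rm bal}(K,V)$ whose restriction at $\overline\fp$ is nonzero gives, via the Euler system divisibility and a local Tate-duality pairing at $\overline\fp$ (the places where the balanced and unbalanced conditions differ), the annihilation of $\Sel_{\rm unb}(K,V)^\vee$; since $V$ is self-dual this yields $\Sel_{\rm unb}(K,V)=0$, i.e. $\Sel(K,V)=0$. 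I expect the main obstacle to be the bookkeeping in the descent step: matching the trivial-character specialization of $\mathfrak{Log}$ with the actual Bloch--Kato local condition defining $\Sel(K,V)$ at $\overline\fp$, and checking that all the Euler factors $\mathcal{E}_p$, the CM period quotient $(\Omega_p/\Omega_\infty)^{\bullet}$, the Gamma-factors, and the constants $\tau_\ell$ appearing in Theorem~\ref{thm:sym} and Theorem~\ref{thm:factor} are genuinely nonzero at the point of interest (so that none of the interpolated values degenerates); and, on the Euler-system side, confirming that the ``big image'' hypothesis (h7)/Proposition~\ref{prop:existence-of-sigma} and the torsion-freeness hypotheses are exactly what \cite{JNS} needs for the rank-zero bound rather than merely the rank-one bound of Theorem~\ref{thm:A}.
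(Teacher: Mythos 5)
Your proposal is correct and follows essentially the same route as the paper: use the interpolation formulas (Theorem~\ref{thm:sym}, Theorem~\ref{thm:katz}) to translate $L(V,0)\neq 0$ and $L(\theta_\psi,k/2)\neq 0$ into nonvanishing of $L_p({\rm ad}^0(g_K)\otimes\psi)$ and $\mathscr{L}_{\fp}^{\rm Katz}(\psi)^{-,\iota}$ at the trivial character, feed this through the factorization of Theorem~\ref{thm:factor} and the reciprocity law of Corollary~\ref{cor:ERL} to deduce $\kaps\neq 0$, and then conclude via Theorem~\ref{thm:A} and global duality between the balanced and unbalanced local conditions at $\overline{\fp}$. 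The only difference is expository: you spell out the specialization-at-the-trivial-character descent from $\kapinftys$ to $\kaps$, which the paper leaves implicit in its citation of Corollary~\ref{cor:ERL}.
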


\begin{proof}
By Theorem~\ref{thm:sym} and Definition~\ref{def:L-psi} we see that
\[
%L(V,0)\doteq L_p({\rm ad}^0(g_K)\otimes\psi)(\xi_{0}),
L(V,0)\neq 0\quad\Longrightarrow\quad L_p({\rm ad}^0(g_K)\otimes\psi)(\xi_{\rm triv})\neq 0,
\]
where
%$\doteq$ stands for equality up to a nonzero factor and $\xi_{0}$
$\xi_{\rm triv}$ is the trivial character of $\Gamma^{\rm ac}$. Similarly, from Theorem~\ref{thm:katz} and Definition~\ref{def:L-psi} we see that
\[
%L(\theta_\psi,k/2)%=L(\psi_0^{-1}\xi^{-1},0)
%\doteq\mathscr{L}_\fp^{\rm Katz}(\psi)^{-,\iota}(\xi_{0}).
L(\theta_\psi,k/2)\neq 0\quad\Longrightarrow\quad\mathscr{L}_\fp^{\rm Katz}(\psi)^{-,\iota}(\xi_{\rm triv})\neq 0.
\]

Therefore by the factorization in Theorem~\ref{thm:factor} we thus see that $L_p({\rm ad}(g_K)\otimes\psi)(\xi_{\rm triv})\neq 0$, and so $\kappa_{\psi,{\rm ad}^0(g)}\neq 0$ by the explicit reciprocity law of Corollary~\ref{cor:ERL}. The result now follows from Theorem~\ref{thm:A} and global duality by the same argument as in \cite[Thm.~9.5]{ACR}.
\end{proof}

\begin{remark}
The hypotheses in Theorem~\ref{thm:thmC} and the decomposition (\ref{eq:dec-V}) imply that the sign of the functional equation for $L(V,s)$ is $+1$, and so the nonvanishing of $L(V,0)$ is expected to hold generically.
%The root number conditions in Theorem~\ref{thm:thmB} imply that the sign in the functional equation of $L(V,s)$ is $+1$, so we expect the condition $L(V,0)\neq 0$ to hold generically.
\end{remark}

%\begin{remark}
%Write $\psi=\psi_0\lambda^{2n}$ as in the proof of Theorem~\ref{thm:thmB}, and denote this by $\psi^{(n)}$. Since for $n\equiv 0\pmod{p-1}$ the character $(\lambda/\bar{\lambda})^{n}$ is unramified, the root number condition on $\theta_{\psi^{(2n+2)}}$ for any fixed $n \equiv 0\pmod{p-1}$ (e.g. for $\theta_{\psi_0}$) implies that
%\[
%L(\theta_{\psi^{(k)}},n+1)\neq 0
%\]
%for all but finitely many $n\geq 0$ with $n\equiv 0\pmod{p-1}$ by Greenberg's nonvanishing results
%\cite[Thm.~4]{greenberg-BDP}, \cite[Thm.~1]{greenberg-critical}
%\cite{greenberg-BDP,greenberg-critical}
%(cf. \cite[\S{3.11}]{deshalit} and \cite{rohrlich-ac}).
%\end{remark}

%\begin{remark}
%Proceeding as in \cite[Thm.~9.6]{ACR}, we can also bound the length of ${\rm Sel}(K,A)$ in terms of the $p$-adic valuation of an integral normalization of $L(V,0)$. We leave the details to the interested reader.
%\end{remark}

\subsection{The Iwasawa main conjecture}

Here we deduce our main result towards the anticyclotomic Iwasawa main conjecture for $V$.

Since $\psi$ has central character $\varepsilon_K$ by assumption, its associated theta series $\theta_\psi$ has trivial nebentypus. In the following we denote by $\varepsilon(\theta_\psi)$ its global root number.

%{\color{blue} Estava explorant una mica com podrien anar les coses en base a la factoritzacio que has escrit, suposo que queda una Katz a cada costat, oi? Posa-ho com vulguis que queda millor}

%Let $\mathscr{L}_{\fp,\mathfrak{c}}^{\mathrm{Katz}}(K)(\psi'/\psi)$ (resp. $\mathscr{L}_{\fp,\mathfrak{c}}^{\mathrm{Katz}} (K)(\psi' \mathbf{N}^{-k/2})$)  stand for the Katz $p$-adic $L$-function whose specialization at weight $k'$ is $\mathscr{L}_{\fp,\mathfrak{c}}^{\mathrm{Katz}} (K)(\psi_{k'}'/\psi_{k'})$ (resp. $\mathscr{L}_{\fp,\mathfrak{c}}^{\mathrm{Katz}} (K)(\psi_{k'}' \mathbf{N}^{-k'/2})$).

\begin{theorem}\label{thm:thmD}
Assume hypotheses (h1)--(h7), and in addition that:
\begin{itemize}
\item $\varepsilon_\ell(V_{fgg^\ast})=+1$ for all primes $\ell\mid N$,
\item $\varepsilon(\theta_\psi)=+1$,
\item ${\rm gcd}(N_g,N_\psi)$ is squarefree.
\end{itemize}
If the $p$-adic $L$-function $L_p({\rm ad}^0(g_K)\otimes\psi)$ is nonzero, then the Pontryagin dual of ${\rm Sel}_{\rm unb}(K_\infty,A)$ is $\Lambda^{\rm ac}$-torsion, with
\[
\Char_{\Lambda^{\rm ac}}\bigl({\rm Sel}_{\rm unb}(K_\infty,A)^\vee\bigr)\supset\bigl(L_p({\rm ad}^0(g_K)\otimes\psi)\cdot\mathscr{L}_{\fp}^{\rm Katz}(\psi)^{-,\iota}\bigr)
\]
in $\Z_p^{\rm ur}\dBr{\Gamma^{\rm ac}}\otimes_{\mathbb Z_p}\mathbb Q_p$.
%\end{enumerate}
\end{theorem}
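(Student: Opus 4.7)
The plan is to apply the Iwasawa-theoretic machinery of anticyclotomic Euler systems \cite{JNS} to the Euler system of Theorem~\ref{thm:ES-T}, and then translate the resulting bound to $p$-adic $L$-values via the explicit reciprocity law of Corollary~\ref{cor:ERL}.

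First, I would establish that $\kapinftys$ is nonzero in $\Sel_{\rm bal}(K_\infty,T)$. The hypothesis $L_p({\rm ad}^0(g_K)\otimes\psi) \neq 0$ together with the nonvanishing of $\mathscr{L}_\fp^{\rm Katz}(\psi)^{-,\iota}$---which follows from the Hida--Tilouine results on the anticyclotomic main conjecture for $K$ invoked in Proposition~\ref{prop:congr}, combined with the assumption $\varepsilon(\theta_\psi)=+1$ to rule out a sign obstruction---implies, via Corollary~\ref{cor:ERL}, that $\mathfrak{Log}({\rm res}_{\overline{\fp}}(\kapinftys))$ is nonzero in the fraction field of $\Z_p^{\rm ur}\dBr{\Gamma^{\rm ac}}$. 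Since $\mathfrak{Log}$ is injective with pseudo-null cokernel by Theorem~\ref{thm:ERL}, this in turn forces $\kapinftys\neq 0$.

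Next, I would apply the Iwasawa-theoretic form of the JNS Euler system theorem. The hypotheses required by \cite{JNS} (big image, non-Eisenstein mod $p$, torsion-freeness of $H^1(K[mp^s],T)$, etc.) are supplied by Hypotheses~\ref{hyp:h1-h6}, with the big-image condition provided by Proposition~\ref{prop:existence-of-sigma} and the Euler system input by Theorem~\ref{thm:ES-T}. The theorem should then yield both the $\Lambda^{\rm ac}$-cotorsion of $\Sel_{\rm unb}(K_\infty,A)$ and a divisibility of the form
\[
\Char_{\Lambda^{\rm ac}}(\Sel_{\rm unb}(K_\infty,A)^\vee)\supset\bigl(\mathfrak{Log}({\rm res}_{\overline{\fp}}(\kapinftys))^2\bigr)
\]
after extending scalars to $\Z_p^{\rm ur}\dBr{\Gamma^{\rm ac}}\otimes\Q_p$. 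By Corollary~\ref{cor:ERL}, the right-hand side equals $(L_p({\rm ad}^0(g_K)\otimes\psi)\cdot\mathscr{L}_\fp^{\rm Katz}(\psi)^{-,\iota})$ up to units, giving the claimed divisibility.

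The main obstacle lies in arranging the precise form of the Iwasawa-theoretic Euler system bound appearing in the second step. The JNS machinery is most directly formulated for bounding Bloch--Kato-type Selmer groups in rank zero, so one must cast it into the Iwasawa-theoretic incarnation that controls $\Sel_{\rm unb}(K_\infty,A)$ specifically, rather than the Selmer group naturally dual to $\Sel_{\rm bal}(K_\infty,T)$ under Poitou--Tate, which would carry different (``strict'') local conditions at $\overline{\fp}$ and a relaxed condition at $\fp$. Reconciling these requires a careful global-duality argument exchanging the local conditions at $\fp$ and $\overline{\fp}$, and the squared exponent on the local image must be tracked consistently through Kolyvagin's derivative construction so that it matches the square already present in the reciprocity law of Corollary~\ref{cor:ERL}---this being the same squaring phenomenon familiar from BDP-type main conjectures. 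Once this Iwasawa-theoretic divisibility is in place, the substitution into $L$-values via Corollary~\ref{cor:ERL} is immediate.
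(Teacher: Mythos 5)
Your overall strategy is the paper's: the nonvanishing of the two anticyclotomic $p$-adic $L$-functions forces, via the factorization of Theorem~\ref{thm:factor} and the reciprocity law of Corollary~\ref{cor:ERL}, the class $\kapinftys$ to be non-torsion (note that non-torsion, not merely nonzero, is what the Iwasawa-theoretic input of \cite{JNS} requires; your argument does give this, since $\mathfrak{Log}$ is $\Lambda^{\rm ac}$-linear with torsion-free target, but you should say so). The results of \cite{JNS} are then applied to the \emph{balanced} Selmer groups, giving that ${\rm Sel}_{\rm bal}(K_\infty,T)$ and $X_{\rm bal}(K_\infty,A)$ have $\Lambda^{\rm ac}$-rank one together with
\[
\Char_{\Lambda^{\rm ac}}\bigl(X_{\rm bal}(K_\infty,A)_{\tors}\bigr)\supset\Char_{\Lambda^{\rm ac}}\Bigl(\tfrac{{\rm Sel}_{\rm bal}(K_\infty,T)}{\Lambda^{\rm ac}\cdot\kapinftys}\Bigr)^{2},
\]
and the passage from this to the statement about ${\rm Sel}_{\rm unb}(K_\infty,A)$, with the $L$-values substituted through Corollary~\ref{cor:ERL}, is exactly the Poitou--Tate argument of \cite[Thm.~7.15]{ACR} that you sketch in your final paragraph. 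So the architecture matches.

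There is, however, one step whose justification as written would fail: the nonvanishing of $\mathscr{L}_{\fp}^{\rm Katz}(\psi)^{-,\iota}$. This does \emph{not} follow from the Hida--Tilouine input used in Proposition~\ref{prop:congr}: that proposition concerns the other anticyclotomic projection $\mathscr{L}_{\fp,\psi}^{\rm Katz}(K)^-$, cut out by the branch character $\bar{\psi}^{\tau-1}$ and generating the congruence ideal of the CM family; its nonvanishing is automatic (a congruence power series is nonzero) and is insensitive to the sign of $\theta_\psi$. The quantity actually appearing in Theorem~\ref{thm:thmD} is the projection of Definition~\ref{def:Katz-}, which interpolates central critical values of self-dual twists along the anticyclotomic line, where identical vanishing genuinely occurs when the root number is $-1$. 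The nonvanishing under $\varepsilon(\theta_\psi)=+1$ is a separate deep input, namely Greenberg's nonvanishing theorem \cite{greenberg-BDP}, and this is what the paper invokes. Replacing your appeal to Proposition~\ref{prop:congr} by that citation, your proposal becomes the paper's proof.
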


\begin{proof}
The assumption that $\varepsilon(\theta_\psi)=+1$ implies that the anticyclotomic projection $\mathscr{L}_\fp^{\rm Katz}(\psi)^{-,\iota}$ is nonzero by Greenberg's nonvanishing results \cite{greenberg-BDP}. Since $L_p({\rm ad}^0(g_K)\otimes\psi)\neq 0$ by hypothesis, together with the factorization in Theorem~\ref{thm:factor} it follows
%(since we also assume $L_p({\rm ad}^0(g_K)\otimes\psi_0)\neq 0$)
that
\[
L_p({\rm ad}(g_K)\otimes\psi)\neq 0.
\]
By Corollary~\ref{cor:ERL}, this shows that the class $\kapinftys$ is non-torsion.
%In light of the decomposition
%\[
%{\rm Sel}_{\rm bal}(K_\infty,T_{g,g^*}^\psi)={\rm Sel}_{\rm bal}(K_\infty,T)\oplus{\rm Sel}(K_\infty,T')
%\]
%and the fact that ${\rm Sel}(K_\infty,T')=0$ by our hypothesis that $\psi_0$ has root number $+1$ (see \cite[Thm.~2.4.17]{AH-ord} in the case where $\psi_0$ correspond to $E/\Q$ and  \cite[Thm.~2.1]{arnold} for the general case), it follows that the projection $\kappa_{g,\infty}^\psi$ of $\kappa_{g,g^*,\infty}^\psi$ to ${\rm Sel}_{\rm bal}(K_\infty,T)$ is not $\Lambda^{\rm ac}$-torsion.
By the general results of \cite{JNS} (see also \cite[Thm.~8.5]{ACR}), we thus conclude that $X_{\rm bal}(K_\infty,A)$ and ${\rm Sel}_{\rm bal}(K_\infty,T)$ both have $\Lambda^{\rm ac}$-rank one, with
\[
\Char_{\Lambda^{\rm ac}}(X_{\rm bal}\bigl(K_\infty,A)_{\tors}\bigr) \supset
\Char_{\Lambda^{\rm ac}}\biggl(\frac{\Sel_{\rm bal}(K_\infty,T)}{\Lambda^{\rm ac} \cdot\kapinftys}\biggr)^2.
\]
The result now follows from this by the same argument as in the proof of \cite[Thm.~7.15]{ACR} based on Poitou--Tate duality and the explicit reciprocity law of Corollary~\ref{cor:ERL}.
\end{proof}

\bibliographystyle{amsalpha}
\bibliography{Symsquare-refs}

\end{document}